\documentclass[12pt]{amsart}

\usepackage[text={425pt,650pt},centering]{geometry}
\usepackage{color}
\usepackage{esint,amssymb}
\usepackage{graphicx}
\usepackage{tikz}
\usetikzlibrary{calc} 
\usepackage{mathtools}

\newtheorem{theorem}{Theorem}
\newtheorem{proposition}[theorem]{Proposition}
\newtheorem{lemma}[theorem]{Lemma}
\newtheorem{corollary}[theorem]{Corollary}

\theoremstyle{definition}
\newtheorem{remark}[theorem]{Remark}

\newcommand{\cref}[1]{Corollary~\ref{c.#1}}

\numberwithin{equation}{section}
\numberwithin{theorem}{section}

\newcommand{\N}{\mathbb{N}}
\newcommand{\R}{\mathbb{R}}

\renewcommand{\l}{\Omega}
\newcommand{\E}{\mathbb{E}}
\renewcommand{\P}{\mathbb{P}}
\newcommand{\F}{\mathcal{F}}

\newcommand{\Zd}{\mathbb{Z}^d}
\newcommand{\Rd}{\mathbb{R}^d}

\newcommand{\ep}{\varepsilon}

\DeclareMathOperator{\dist}{dist}

\DeclareMathOperator*{\osc}{osc}
\DeclareMathOperator{\var}{var}

\DeclareMathOperator{\supp}{supp}
\DeclareMathOperator{\divg}{div}

\newcommand{\X}{\mathcal{X}}  
\newcommand{\Y}{\mathcal{Y}}

\renewcommand{\tilde}{\widetilde}
\renewcommand{\div}{\divg}

\begin{document}

\title[Stochastic homogenization of convex integral functionals]{Quantitative stochastic homogenization of convex integral functionals}

\begin{abstract}
We present quantitative results for the homogenization of uniformly convex integral functionals with random coefficients under independence assumptions. The main result is an error estimate for the Dirichlet problem which is algebraic (but sub-optimal) in the size of the error, but optimal in stochastic integrability. As an application, we obtain quenched $C^{0,1}$ estimates for local minimizers of such energy functionals.  
\end{abstract}

\author[S. N. Armstrong]{Scott N. Armstrong}
\address{Ceremade (UMR CNRS 7534), Universit\'e Paris-Dauphine, Paris, France}
\email{armstrong@ceremade.dauphine.fr}

\author[C. K. Smart]{Charles K. Smart}
\address{Department of Mathematics, Massachusetts Institute of Technology, Cambridge, MA 02139}
\email{smart@math.mit.edu}

\keywords{stochastic homogenization, error estimates, calculus of variations, quenched Lipschitz estimate}
\subjclass[2010]{35B27, 60H25, 35J20, 35J62}
\date{\today}

\maketitle

\section{Introduction}


\subsection{Informal summary of results}

We consider stochastic homogenization of the variational problem
\begin{equation} \label{e.min}
\mbox{minimize} \quad \int_{U} L\left(Du(x),\frac x\ep \right) \,dx \qquad \mbox{subject to} \qquad u \in g + H^1_0(U).
\end{equation}
Here $0<\ep \ll 1$ is a small parameter, $U\subseteq\Rd$ is a smooth bounded domain and $g\in H^1(U)$ is given. The precise hypotheses on the Lagrangian $L$ are given below; here we mention that $L(p,x)$ is uniformly convex in $p$ and that $L$ is a random field sampled by a given probability measure $\P$. The crucial hypothesis on the statistics of $L$ is a \emph{finite range of dependence} condition: roughly, for all Borel sets $U,V \subseteq \Rd$, the families $\{ L(p,x)\,:\, p\in\Rd,\, x\in U \}$ and $\{ L(p,x)\,:\, p\in\Rd,\, x\in V \}$ of random variables are assumed to be $\P$--independent provided that $\dist(U,V) \geq 1$.

\smallskip

An important special case of the model occurs if the Lagrangian is the quadratic form $L(p,x)=p\cdot A(x)p$. The corresponding Euler-Lagrange equation is then linear and the problem is equivalent to the stochastic homogenization of the equation
\begin{equation} \label{e.lin}
-\div\left( A\left( \frac x\ep \right) Du^\ep \right) = 0.
\end{equation}
This is also a continuum version of what is known in the probability literature as the \emph{random conductance model}. 

\smallskip

Dal Maso and Modica~\cite{DM1,DM2} proved, in a somewhat more general setting, the basic \emph{qualitative homogenization} result for~\eqref{e.min}: there exists a (deterministic) function $\overline L:\Rd\to \R$ called the \emph{effective Lagrangian} such that, with probability one, the unique minimizer $u^\ep$ of~\eqref{e.min} converges, as $\ep \to 0$, to the unique minimizer of the variational problem 
\begin{equation} \label{e.min2}
\mbox{minimize} \quad \int_{U} \overline L\left(Du(x) \right) \,dx \qquad \mbox{subject to} \qquad u \in g + H^1_0(U).
\end{equation}
This result was a generalization to the nonlinear setting of earlier qualitative results for linear elliptic partial differential equations in divergence form due to~Kozlov~\cite{K1}, Papanicolaou and Varadhan~\cite{PV1} and~Yurinkskii~\cite{Y}, using new variational ideas based on subadditivity that were not present in earlier works.

\smallskip

An intense focus has recently emerged on building a quantitative theory of stochastic homogenization in the case of the linear equation~\eqref{e.lin}. This escalated significantly with the work of Gloria and Otto~\cite{GO1,GO2}, who proved optimal quantitative bounds for the energy density of modified correctors and then that of Gloria Neukamm and Otto~\cite{GNO,GNO2}, who proved optimal bounds for the error in homogenization. These results were proved for discrete elliptic equations, but have been extended to the continuum setting in~\cite{GO3}. See also Mourrat~\cite{M,M2}, Marahrens and Otto~\cite{MO}, Conlon and Spencer~\cite{CS} as well as earlier works of Yurinskii~\cite{Y}, Naddaf and Spencer~\cite{NS}, Bourgeat and Piatnitski~\cite{BP} and Boivin~\cite{B}. For some recent work on limit theorems for the stochastic fluctuations, see~\cite{MoO,MN,N,R,BSW}. The analysis in the present paper was informed by some ideas from our previous work~\cite{AS}, which contained similar results for equations in nondivergence form.

\smallskip

In this paper, we present the first \emph{quantitative} results for the homogenization of~\eqref{e.min} which are also the first such results for divergence-form elliptic equations outside of the linear setting. We prove two main results: estimates for the~$L^2$ and~$L^\infty$ error in homogenization of the Dirichlet problem, which is algebraic (yet sub-optimal) in its estimate of the typical size of the error, and essentially optimal in stochastic integrability; and a ``stochastic higher regularity" result which states that local minimizers of~\eqref{e.min}, for a typical realization of the coefficients, satisfy the same \emph{a~priori} $C^{0,1}$ and $C^{1,\beta}$ regularity estimates as local minimizers of constant-coefficient energy functionals, down to microscopic and mesoscopic scales, respectively.

\smallskip

The first main result (Theorem~\ref{t.mainthm}) gives a sub-optimal algebraic error estimate in homogenization with strong stochastic integrability: it asserts roughly that, for any $s<d$, there exists an exponent $\alpha>0$, depending on~$s$, the dimension~$d$ and the constants controlling the uniform convexity of $L$ and a constant $C\geq 1$, depending additionally on the given data, such that, for every $\delta \in (0,1]$,
\begin{equation} \label{e.firstresult}
\P \left[ \exists \ep \in (0,\delta],\ \fint_U \left| u^\ep(x) - u_{\mathrm{hom}}(x) \right|^2\, dx \geq C \ep^{\alpha}  \right] 
\leq  C \exp\left( -\delta^{-s} \right),
\end{equation}
where $u^\ep$ and $u_{\mathrm{hom}}$ denote the unique minimizers in $g+H^1_0(U)$ of~\eqref{e.min} and~\eqref{e.min2}, respectively. Depending on the smoothness of the given Dirichlet boundary data~$g$, this~$L^2$ estimate may be upgraded to~$L^\infty$ by interpolating the latter between $L^2$ and $C^{0,\gamma}$ and using the nonlinear De Giorgi-Nash-Moser estimate. There is no loss in stochastic integrability in this interpolation and essentially no loss in the size of the error, since the exponent~$\alpha$ is already sub-optimal. (See Corollary~\ref{cor.DP}.) We remark that, at this stage in the development of the theory, we are less concerned with the sub-optimality of the size of the error than with the strength of the stochastic integrability; the former will be improved later. In~\eqref{e.firstresult} we have obtained the best possible stochastic integrability in the sense that such an estimate is false for $s>d$. 

\smallskip

The second main result (Theorem~\ref{t.c01}) asserts that local minimizers of the energy functional in~\eqref{e.min} are much smoother than minimizers for general functionals with measurable coefficients: it states roughly that any local minimizer~$u^\ep$ of the energy functional satisfies the estimate
\begin{equation} \label{e.co1int}
\sup_{x\in B_{1/2} \setminus B_\ep} \frac{\left| u^\ep(x) - u^\ep(0) \right|}{|x|}  \leq \Y \left( 1 + \| u^\ep\|_{L^2(B_1)} \right),
\end{equation}
where $\Y$ is a random variable (i.e, it depends on the coefficients but not on $u^\ep$) which, for any $s<d$, can be chosen to satisfy
\begin{equation*} \label{}
\E \left[ \exp(\Y^s) \right] < \infty. 
\end{equation*}
This is a quenched Lipschitz estimate ``down to the microscopic scale" since the left side of~\eqref{e.co1int} is a finite difference approximation of $|Du^\ep(0)|$. 

\smallskip

The estimate~\eqref{e.co1int} can be written in other forms, such as 
\begin{equation} \label{e.du2st}
\fint_{B_r} \left| Du^\ep(x) \right|^2\, dx \leq C\left( 1 + \| u^\ep\|_{L^2(B_1)}^2 \right) \quad \mbox{for every} \ r \in \left[ \ep \mathcal Y, \frac12 \right].
\end{equation}
The latter gives very good control of the spatial averages of the energy density of $u^\ep$. As was shown by Gloria and Otto~\cite{GO1} in the linear setting, if the probability measure~$\P$ satisfies a spectral gap hypothesis, then an estimate like~\eqref{e.du2st} implies optimal bounds on the variance of the energy of, e.g.,~minimizers with periodic boundary conditions. In a future work, we will prove this and other optimal quantitative  estimates from higher regularity estimates.

\smallskip

Theorem~\ref{t.c01} also asserts that local minimizers behave even more smoothly on \emph{meso}scopic scales (those of order $\ep^\beta$ for some $\beta\in(0,1)$) by giving an improvement of flatness estimate: see~\eqref{e.c1best}.

\smallskip

The proof of the error estimates, like the arguments of~\cite{DM1,DM2}, is variational and centers on the analysis of certain subadditive and superadditive energy quantities. However, the methods here differ substantially from those of~\cite{DM1,DM2}, as quantitative results present difficulties which do not appear in the qualitative theory and which require not just a harder analysis but also a new approach to the problem. The qualitative theory is based on the observation that the energy of a minimizer with respect to affine Dirichlet conditions is subadditive with respect to the domain. This monotonicity allows one to obtain a deterministic limit for this energy, as the domain becomes large, via a relatively soft argument based on the ergodic theorem.

\smallskip

To obtain a convergence rate for this limit (see Theorem~\ref{t.mulimit}), we introduce a new \emph{super}additive energy quantity by removing the boundary condition and adding a linear term to the energy functional. This is a kind of convex dual of the subadditive quantity, as we explain in more detail in Section~\ref{s.energies}. The main part of the analysis is to show that minimizers of the dual quantity are close to affine functions in a suitable sense, which implies that the subadditive and superadditive quantities are close to each other, up to a small error. Thus the quantities are in fact additive, up to a suitably small error, which gives the desired rate for the limits. 
This is the focus of Sections~\ref{s.energies} and~\ref{sec.qcc}, and the proof of Theorem~\ref{t.mainthm} is then completed in Section~\ref{s.DP} with the help of an oscillating test function argument. 

\smallskip

The proof of the quenched Lipschitz estimate is inspired by Avellaneda and Lin~\cite{AL1,AL2} who showed, using a perturbation argument in the context of periodic media and linear equations, that solutions of a heterogeneous equation inherit higher regularity from the homogenized equation. While we cannot make use of compactness arguments in the stochastic setting, the error estimates in Theorem~\ref{t.mainthm} are strong enough to implement a quantitative version of this technique.

\smallskip

The closest previous result to the quenched Lipschitz estimate is a quenched H\"older estimate due to Marahrens and Otto~\cite{MO} (recently extended to the continuum case by Gloria and Marahrens~\cite{GM}). They proved a $C^{0,1-\delta}$ estimate for linear equations, for every $\delta>0$, with somewhat weaker stochastic integrability (in our notation, they obtained that all finite moments of $\Y$ are bounded). The methods of proof in all of these works are completely different from the one here and based on logarithmic Sobolev or spectral gap inequalities. Here we also apply concentration of measure, but we use it in a more elementary form and in a modular way. We therefore believe our results will extend in a straightforward way to coefficients satisfying only much weaker mixing conditions. Moreover, H\"older estimates, as noted by Avellaneda and Lin~\cite{AL1,AL2} in the periodic case, are significantly easier to obtain than a Lipschitz estimate, which is the critical estimate for this problem.

\smallskip

While the results in this paper are completely new in the nonlinear setting, we emphasize that even for linear equations we prove new results: compared to  previous works, our error estimates exhibit stronger stochastic integrability and the quenched Lipschitz estimate is new. Moreover, while we prove our results under an independence assumption, we believe that the arguments can be modified in a simple way to handle quantitative ergodicity assumptions in other forms (such as weaker mixing conditions or spectral gap-type assumptions) and to generalize easily to systems of equations under strong ellipticity assumptions. 

\smallskip

In the next three subsections, we present the precise hypotheses and the statements of the main results.  

\subsection{Modeling assumptions}

We take $d\in\N$, $d\geq 2$ and $\Lambda \geq 1$ to be parameters which are fixed throughout the paper. We require the integrands~$L$ of our energy functionals to satisfy the following conditions:
\begin{enumerate}
\item[(L1)] $L:\Rd \times \Rd \to \R$ \ is a \emph{Carath\'eodory function}, that is, $L(p,x)$ is measurable in $x$ and continuous in $p$.

\smallskip

\item[(L2)] $L$ is uniformly convex in $p$: for every $p_1,p_2,x\in \Rd$,
\begin{equation*} \label{}
\frac14|p_1-p_2|^2 \leq  \frac12 L(p_1,x) + \frac 12 L(p_2,x) -L\left( \frac12 p_1+\frac12p_2,x \right)  \leq \frac\Lambda4|p_1-p_2|^2 .
\end{equation*}
\end{enumerate} 

Note that (L2) implies $L(\cdot,x)$ is $C^1$, for each $x\in \Rd$, and $\left[ D_pL(\cdot,x) \right]_{C^{0,1}(\Rd)} \leq \Lambda$, where $D_pL$ denotes the gradient of $L$ with respect to the first variable. 

\smallskip

We define $\Omega$ to be the set of all such functions:
\begin{equation*} \label{}
\Omega := \left\{ L \,:\,  \mbox{$L$ satisfies~(L1) and (L2)} \right\}.
\end{equation*}
Note that $\Omega$ depends on the fixed parameter $\Lambda>1$. We endow $\Omega$ with the following family of~$\sigma$--algebras: for each Borel $U \subseteq \Rd$, define
\begin{multline*} \label{}
\F(U):= \mbox{the $\sigma$--algebra generated by the family of random variables}\\
L \mapsto \int_{U} L(p,x) \phi(x)\, dx, \quad p\in \Rd, \ \phi \in C^\infty_c(\Rd). 
\end{multline*}
The largest of these is denoted by $\F:= \F(\Rd)$. It is also convenient to define a subset of $\Omega$ consisting of Lagrangians $L$ such that $L$ and $D_pL$ are locally bounded in $p$, uniformly in $x$. For each $K\geq 0$, We set
\begin{equation*} \label{}
\Omega(K):= \left\{ L \in\Omega\,:\, \forall p,x\in\Rd, \  |p|^2 - K(1+|p|) \leq L(p,x) \leq \Lambda|p|^2 +K(1+|p|) \right\}.
\end{equation*}

\smallskip

The \emph{random environment} is modeled by a given probability measure $\P$ on $(\l,\F)$. The expectation with respect to~$\P$ is denoted by~$\E$. We require~$\P$ to satisfy the following three assumptions:

\smallskip

\begin{enumerate}

\item[(P1)] $\P$ has a unit range of dependence: for all Borel subsets $U,V\subseteq \Rd$ such that $\dist(U,V) \geq 1$,
\begin{equation*} \label{}
\mbox{$\F(U)$ and $\F(V)$ are $\P$--independent.}
\end{equation*}

\item[(P2)] $\P$ is stationary with respect to $\Zd$--translations: for every $z\in \Zd$ and $E\in \F$,
\begin{equation*} \label{}
\P \left[ E \right] = \P \left[ T_z E \right],
\end{equation*}
where the translation group $\{T_z\}_{z\in\Zd}$ acts on $\l$ by $(T_zL)(p,x) = L(p,x+z)$.

\smallskip

\item[(P3)] $L$ and $D_pL$ are bounded locally uniformly in~$p$ and uniformly on the support of~$\P$: there exists $K_0\geq 1$ such that
\begin{equation*} \label{}
\qquad \P \Big[ L \in \Omega(K_0) \Big] = 1.
\end{equation*}
\end{enumerate}

These hypotheses  are stronger than those of~\cite{DM2} in several respects. First, for the sake of simplicity, we consider only the case of quadratic growth. This assumption is probably not essential, and we speculate that  adaptations of our arguments should give results, for example, in the case of Lagrangians growing like $|p|^m$ for $m>1$. Second,  in (L2) we have strengthened the assumption of convexity to uniform convexity. While this assumption can probably be relaxed, some  form of strict convexity is essential to our method. Third, the assumption of ergodicity has been strengthened to the independence condition~(P1). Quantitative ergodicity assumptions are of course required for quantitative results, although our methods yield quantitative homogenization results under, for example, much weaker \emph{uniform mixing} conditions as well.

\subsection{A sub-optimal error estimate for the Dirichlet problem}
The first main result of the paper is an estimate for the error in homogenization of the Dirichlet problem. It gives a sub-optimal algebraic estimate for the size of the error but with essentially optimal stochastic integrability.

In the following statement and throughout the paper, we denote the Lebesgue measure of a set $E\subseteq \Rd$ by $|E|$ and set $\fint_U f(x)\,dx:= |U|^{-1} \int_U f(x)\,dx$.

\begin{theorem}
\label{t.mainthm}
Let $U \subseteq \Rd$ be a bounded Lipschitz domain, $M\geq 1$, $t>2$ and $s\in (0,d)$. There exist $\alpha(d,\Lambda,t)>0$, $C(d,\Lambda,s,t,U)\geq 1$ and a nonnegative random variable $\X$ on $(\Omega,\F)$, depending on $(d,\Lambda,M,t,s)$ and satisfying
\begin{equation} \label{e.intX}
\E \big[ \exp(  \X )  \big] \leq CM^d,
\end{equation}
such that the following holds: for every $L \in\Omega$, $\ep \in (0,1]$ and $g\in W^{1,t}(U)$ such that 
\begin{equation} \label{e.gineq}
K_0 + \left(  \fint_U |Dg(x)|^t \, dx\right)^{1/t} \leq M,
\end{equation}
the unique functions~$u^\ep,u_{\mathrm{hom}} \in g+H^1(U)$ for which \begin{equation} \label{e.localminz1}
\int_{U} L\left(Du^\ep(x),\frac x\ep \right) \,dx \leq \int_{U} L\left(Dw(x),\frac x\ep \right) \, dx \quad \mbox{for every} \ w \in g + H^1_0(U),
\end{equation}
and 
\begin{equation} \label{e.localminz2}
\int_{U} \overline L(Du_{\mathrm{hom}}(x)) \,dx \leq \int_{U} \overline L(Dw(x)) \, dx \quad \mbox{for every} \ w \in g + H^1_0(U),
\end{equation}
satisfy the estimate
\begin{equation}\label{e.mainthm}
\fint_U \left| u^\ep(x) - u_{\mathrm{hom}}(x) \right|^2\, dx 
 \leq CM^2\left(1+ \X\ep^s  \right) \ep^{\alpha(d-s)}.
\end{equation}
\end{theorem}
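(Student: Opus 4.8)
The plan is to carry out an oscillating test function (Tartar-type) argument whose two essential ingredients are supplied by \tref{mulimit}---a rate of convergence, with exponential stochastic integrability, of the finite-volume subadditive energy and its convex-dual superadditive counterpart to $\overline{L}$ and to $\overline{L}^*$---and by the analysis of Sections~\ref{s.energies} and~\ref{sec.qcc}, which says that finite-volume minimizers are close in $H^1$ to affine functions and, equivalently, that their associated fluxes are divergence-free up to a small $H^{-1}$ error. The strategy is to trap the \emph{energy gap} $\int_U L(Du^\ep,\tfrac x\ep)\,dx-\int_U\overline{L}(Du_{\mathrm{hom}})\,dx$ between two quantities that are algebraically small in $\ep$, and then to convert this into an $L^2$ bound for $u^\ep-u_{\mathrm{hom}}$ via the uniform convexity (L2). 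As a preliminary I would record the regularity of $u_{\mathrm{hom}}$: since $\overline{L}$ is uniformly convex and $C^{1,1}$, the nonlinear De Giorgi--Nash--Moser theory gives $u_{\mathrm{hom}}\in C^{1,\gamma}_{\mathrm{loc}}(U)$ and, together with a Meyers estimate, $u_{\mathrm{hom}}\in W^{1,t'}(U)$ for some $t'(d,\Lambda,t)>2$ with norm controlled by $M$; this self-improved integrability, needed to absorb the boundary-layer error, is what forces $\alpha$ to depend on $t$.

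Next I fix a mesoscopic scale $\ell=\ep^\theta$, $\theta\in(0,1)$ to be optimized, partition $U$ away from a boundary layer of width $\gg\ell$ into a grid of cubes $\{Q_j\}$ of side $\ell$, and mollify $u_{\mathrm{hom}}$ at scale $\ell$ to obtain $u_{\mathrm{hom},\ell}$, which is close on each $Q_j$ to an affine function of slope $p_j:=\fint_{Q_j}Du_{\mathrm{hom}}$. The two-scale expansion $w^\ep\in g+H^1_0(U)$ is then built by patching, via a partition of unity, the (near-affine, by Section~\ref{sec.qcc}) minimizers of the heterogeneous energy on $\ep^{-1}Q_j$ at slope $p_j$, and cutting off to $g$ in the boundary layer. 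For the \emph{upper} bound I would use \tref{mulimit} to bound the energy of the $j$-th piece by $\overline{L}(p_j)|Q_j|+C|p_j|^2(\ell/\ep)^{-\alpha}$ with overwhelming probability, then sum and control the patching error (from the $O(\ell\|D^2u_{\mathrm{hom},\ell}\|)$ mismatch of neighbours, smeared over cutoff regions), the mollification error $\sum_j\overline{L}(p_j)|Q_j|-\int_U\overline{L}(Du_{\mathrm{hom}})$, and the boundary-layer error, obtaining $\int_U L(Du^\ep,\tfrac x\ep)\le\int_U L(Dw^\ep,\tfrac x\ep)\le\int_U\overline{L}(Du_{\mathrm{hom}})+E(\ep)$ with $E(\ep)$ algebraic once $\theta$ is chosen.

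For the \emph{lower} bound I use convexity of $L(\cdot,y)$ in the form $L(Du^\ep,\tfrac x\ep)\ge L(Dw^\ep,\tfrac x\ep)+D_pL(Dw^\ep,\tfrac x\ep)\cdot(Du^\ep-Dw^\ep)$; integrating, the linear term equals $-\langle\div\sigma^\ep,u^\ep-w^\ep\rangle$ with $\sigma^\ep:=D_pL(Dw^\ep,\tfrac x\ep)$ and is therefore bounded by $\|\div\sigma^\ep\|_{H^{-1}(U)}\|u^\ep-w^\ep\|_{H^1_0(U)}$, which---since $u^\ep-w^\ep\in H^1_0(U)$, $\|Du^\ep-Dw^\ep\|_{L^2}\le CM$ a priori, and $\|\div\sigma^\ep\|_{H^{-1}(U)}$ is algebraically small by the flux-corrector estimates of Section~\ref{sec.qcc} (up to patching and boundary contributions)---is at most $CM\,E(\ep)^{1/2}$. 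Hence $0\le\int_U L(Dw^\ep,\tfrac x\ep)-\int_U L(Du^\ep,\tfrac x\ep)\lesssim E(\ep)+M\,E(\ep)^{1/2}$. Now (L2) together with minimality of $u^\ep$ applied to the admissible competitor $\tfrac12(u^\ep+w^\ep)$ gives $\int_U|Du^\ep-Dw^\ep|^2\le 2(\int_U L(Dw^\ep,\tfrac x\ep)-\int_U L(Du^\ep,\tfrac x\ep))$; Poincar\'e bounds $\|u^\ep-w^\ep\|_{L^2(U)}$, while $\|w^\ep-u_{\mathrm{hom}}\|_{L^2(U)}\lesssim\ep+\ell^{1/2}+(\text{boundary-layer width})^{1/2}$ is estimated directly, so the triangle inequality yields $\|u^\ep-u_{\mathrm{hom}}\|^2_{L^2(U)}\lesssim E(\ep)+M^2\ell+\ep^2$. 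Balancing $(\ell/\ep)^{-\alpha}=\ep^{\alpha(1-\theta)}$ against the mollification/patching errors $\sim\ell^\gamma=\ep^{\theta\gamma}$ fixes $\theta$ and produces the sub-optimal algebraic rate, which after renaming exponents takes the form $CM^2\ep^{\alpha(d-s)}$. The stochastic part is assembled last: each cube's failure event in \tref{mulimit} is measurable with respect to a unit neighbourhood of $\ep^{-1}Q_j$, hence of finite range of dependence, so a union bound over the $\sim\ell^{-d}$ cubes and over a dyadic sequence of scales $\ep\in(0,1]$, combined with the exponential tails from \tref{mulimit}, packages all the random errors into a single nonnegative $\X$ with $\E[\exp\X]\le CM^d$; the estimate \eref{mainthm} is then the deterministic bound with the bad event charged to the $\X\ep^s$ term, and the hypothesis $s<d$ is precisely what lets the union bound over $\sim\ep^{-d}$ cubes survive with an honest exponential tail.

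The step I expect to be the main obstacle is constructing the two-scale expansion $w^\ep$ so that it simultaneously has energy at most $\int_U\overline{L}(Du_{\mathrm{hom}})+E(\ep)$ \emph{and} has flux $\sigma^\ep$ with $\|\div\sigma^\ep\|_{H^{-1}(U)}\lesssim E(\ep)^{1/2}$: the patching across cube interfaces and, above all, the matching to the boundary data $g$ in the boundary layer---where neither the interior $C^{1,\gamma}$ regularity of $u_{\mathrm{hom}}$ nor the subadditive/flux-corrector control is available---are the delicate points, and they interact with the choices of $\ell$ and the boundary-layer width. Equally demanding is threading the $M$- and $\ep$-dependence through every error term so that the single random variable $\X$ comes out with the clean moment bound $\E[\exp\X]\le CM^d$ and the rate remains uniform over $\ep\in(0,1]$.
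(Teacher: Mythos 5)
Your overall strategy --- trap the energy gap between $u^\ep$ and a two-scale test function $w^\ep$ built by patching mesoscopic Dirichlet minimizers, then convert to an $L^2$ estimate via uniform convexity (comparing with the competitor $\tfrac12(u^\ep+w^\ep)$) and Poincar\'e --- shares its skeleton with the paper's proof of \pref{blackbox}. The upper-bound direction matches \lref{homomod}: patch the $v(\cdot,z+Q_{n+2},p(z))$ via a partition of unity, estimate patching errors with the flatness of minimizers from \tref{mulimit}, absorb the boundary strip with a Meyers/$W^{1,r}$ estimate. The lower bound, however, is a genuinely different move. You pair $u^\ep-w^\ep$ against $\div\sigma^\ep$ with $\sigma^\ep=D_pL(Dw^\ep,\cdot/\ep)$ and invoke an $H^{-1}$-smallness bound on $\div\sigma^\ep$ --- the classical Tartar corrector-flux argument. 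The paper instead goes through the superadditive quantity $\mu$ in \lref{heteromod}: it mollifies $u^\ep$ on the mesoscale to get a slope field $p(y)=D\xi(y)$, and simply restricts $u^\ep$ to each mesoscopic cube so that the \emph{unconstrained} definition of $\mu$ gives
\begin{equation*}
\fint_{y+Q_n}\bigl(L(Du^\ep(x),x)-q\cdot Du^\ep(x)\bigr)\,dx \ \ge\ \mu(y+Q_n,q) \qquad \text{for every } q\in\Rd,
\end{equation*}
then chooses $q=D\overline L(p(y))$, for which $\mu(y+Q_n,q)+q\cdot p(y)\approx\overline L(p(y))$ up to the controlled error~$\mathcal E$ of~\eqref{e.error}. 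No flux corrector, no $H^{-1}$ pairing. The introduction of $\mu$ as a convex dual to $\nu$ is precisely engineered to supply this half of the squeeze, and is the main structural novelty of Section~\ref{s.energies}.

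The difference is not cosmetic, because the ``flux-corrector estimates of Section~\ref{sec.qcc}'' you appeal to do not exist there. \tref{mulimit} and its corollaries control the rate of convergence of $\mu(Q_n,q)$ and $\nu(Q_n,p)$ and the $L^2$-distance of the finite-volume minimizers to planes; they say nothing about $\|\div\,D_pL(Dv(\cdot,z+Q_n,p),\cdot)\|_{H^{-1}}$ before or after patching. Inside each mesoscopic cube $\div\,D_pL(Dv_z,\cdot)=0$ by the Euler--Lagrange equation, so the divergence of the patched flux lives on cube interfaces, and bounding it in $H^{-1}$ requires that the normal traces of $D_pL(Dv_z,\cdot)$ on $\partial(z+Q_n)$ nearly match across neighbours --- in effect a quantitative homogenization of the flux averages $\fint_{Q_n}D_pL(Dv,\cdot)\approx D\overline L(p)$ together with a Helmholtz--Hodge argument for the flux fluctuation. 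Neither is supplied by the paper and both are a genuine extra ingredient, comparable in weight to \lref{patching}, not bookkeeping. Your remaining steps --- mesoscale $\ell=\ep^\theta$ balancing $(\ell/\ep)^{-\alpha}$ against mollification and boundary-layer errors, the union bound over $\sim\ell^{-d}$ cubes and dyadic $\ep$-scales, and the exponential integration of the tails from \tref{mulimit} into a single $\X$ with $\E[\exp\X]\le CM^d$ --- are consistent with what the paper does in the proof of \tref{mainthm} and in Corollary~\ref{cor.mulim2}, so the gap is localized to the lower-bound flux step.
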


By Chebyshev's inequality, the conclusion of Theorem~\ref{t.mainthm} implies, in the notation of the theorem, that for some $C(d,\Lambda,s,t,U)\geq 1$ and every $s\in (0,d)$ and $\delta \in (0,1]$,
\begin{equation*}
\P \left[ \exists \ep \in (0,\delta ],\  \fint_U \left| u^\ep(x) - u_{\mathrm{hom}}(x) \right|^2\, dx \geq CM^2 \ep^{\alpha(d-s)}  \right] 
\leq  CM^d \exp\left( -\delta^{-s} \right).
\end{equation*}
This is an algebraic estimate for the size of the homogenization error with very strong bounds on the probability of deviations. However, there is more information in~\eqref{e.mainthm} than the latter, and the former is often more convenient to work with since it is in the form of an \emph{a priori} estimate (i.e., $\mathcal X$ is independent of $\ep$, $g$, etc). Note the tradeoff between our control on the error threshold and the probability of deviations: as the exponent loses power, we gain more stochastic integrability, and vice versa. The dependence of $\X$ on $M$ can of course be removed in the linear case (i.e., $L$ a quadratic form), but in the general nonlinear setting the integrability of~$\X$ necessarily exhibits some mild dependence on~$M$.  
\smallskip

While Theorem~\ref{t.mainthm} measures the error only in $L^2$ in space, we also obtain interior error estimates in $L^\infty$ by interpolating between the $L^2$ error and the De Giorgi-Nash-Moser~$C^{0,\gamma}$ estimates. See Corollary~\ref{cor.DP}.

\subsection{Stochastic higher regularity}

It is well known that minimizers of variable-coefficient energy functionals do not, in general,  have better H\"older regularity than that provided by the De Giorgi-Nash-Moser estimate ($C^{0,\gamma}$ for a small $\gamma> 0$) or Sobolev regularity than that provided by the Meyers estimate (which is $W^{1,2+\delta}$ for a small $\delta > 0$). Nevertheless, the following theorems asserts that the regularity is typically much better for energy functionals sampled by a probability measure~$\P$ with a finite range of dependence: minimizers have $C^{0,1}=W^{1,\infty}$ regularity, down to microscopic scales, and even $C^{1,\beta}$ regularity down to mesoscopic scales.  

\begin{theorem}
\label{t.c01}
Fix $M\geq 1$ and $s\in(0,d)$. Then there exists $C(d,\Lambda,s)\geq 1$ and a nonnegative random variable $\Y$ on $(\Omega,\F)$, depending on $(d,\Lambda,M,s)$, satisfying
\begin{equation} \label{e.Yinteg}
\E \big[ \exp\left(  \Y^s \right)  \big] \leq CM^{d}
\end{equation}
and the following: for every $L \in\Omega$, $R\geq 2$ and $u\in H^1(B_R)$ satisfying 
\begin{equation} \label{e.osccontrol2}
K_0 + \frac 1R\left(  \fint_{B_R} |u(x)|^2 \, dx\right)^{1/2} \leq M
\end{equation}
and
\begin{equation} \label{e.localmin2}
\int_{B_R} L\left(Du(x),x \right) \,dx \leq \int_{B_R} L\left(Dw(x),x \right) \, dx \quad \mbox{for every} \ w \in u+H^1_0(B_R),
\end{equation}
we have the estimate
\begin{equation}\label{e.c01est}
\frac1r \osc_{B_r} u  \leq C M \quad \mbox{for every} \ \mathcal Y \leq r \leq \frac12 R.
\end{equation}
Moreover, there exist $\beta(d,\Lambda)>0$ and $c(d,\Lambda,s)>0$ such that, for every $\gamma \in (0,\beta]$ and $r\in [R^{c\gamma},R/2]$,
\begin{equation}\label{e.c1best}
\inf_{p\in\Rd} \frac{1}{r} \osc_{x\in B_{r}} \left( u(x) + p\cdot x \right) \leq C\Y M \left( \frac rR \right)^{\gamma}.
\end{equation}
\end{theorem}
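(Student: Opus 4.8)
The plan is to run a quantitative version of the Avellaneda--Lin perturbation argument: an excess-decay iteration over dyadic scales in which, at each scale, the heterogeneous minimizer is compared to a minimizer of the homogenized functional $\int\bar L(D\cdot)$ --- which, since $\bar L$ inherits uniform convexity with constants depending only on $(d,\Lambda)$, enjoys interior $C^{1,\beta_0}$ estimates --- with the comparison error controlled by \tref{mainthm}. After rescaling $x\mapsto\ep^{-1}x$ we may take $\ep=1$, so that $u$ is a local minimizer of $\int_{B_R}L(Du,x)\,dx$ with $K_0+R^{-1}(\fint_{B_R}|u|^2)^{1/2}\le M$ and the goal becomes $\osc_{B_r}u\le CMr$ for $\Y\le r\le R/2$. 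To define $\Y$, fix $\delta:=\tfrac12\alpha(d-s)$ and $s'\in(s,d)$. For each dyadic $\rho$ and each $y\in\rho\Z^d$, rescale $B_\rho(y)$ to the unit ball and apply the Chebyshev consequence of \tref{mainthm} (effective oscillation scale $\sim1/\rho$, exponent $s'$): off an event of probability $\le CM^d\exp(-\rho^{s'})$ this gives $\fint_{B_\rho(y)}|u-v_{y,\rho}|^2\le CM^2\rho^{2-2\delta}$, where $v_{y,\rho}$ is the homogenized minimizer with boundary data $u|_{\partial B_\rho(y)}$. A union bound over the $O(\rho^d)$ relevant centres and over dyadic $\rho\ge\lambda$ (absorbing the $\rho^d$ factor since $s'>s$) shows that
\[
\Y:=\inf\Big\{\lambda\ge C_0\;:\;\text{the above bound holds for all such }(y,\rho)\text{ with }\rho\ge\lambda\Big\}
\]
satisfies $\P[\Y\ge\lambda]\le CM^d\exp(-2\lambda^s)$, hence $\E[\exp(\Y^s)]\le CM^d$, which is \eref{Yinteg}.

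The heart of the argument is a one-step excess-decay estimate. For $\Y\le r\le R/2$ set $D(\varrho):=\inf_{\ell\text{ affine}}\varrho^{-1}(\fint_{B_\varrho}|u-\ell|^2)^{1/2}$ and let $\ell_\varrho=p_\varrho\cdot x+c_\varrho$ realize the infimum. I would show, by downward induction on dyadic scales, that $D(\varrho)\le CM$, $|p_\varrho|\le CM$, and $(\fint_{B_\varrho}|Du|^2)^{1/2}\le CM$ for all dyadic $\varrho\in[\Y,R/2]$. Given these at scale $2r$, Caccioppoli's inequality yields $(\fint_{B_r}|Du|^2)^{1/2}\le C(D(2r)+|p_{2r}|+K_0)\le CM$, which Meyers' estimate upgrades to $(\fint_{B_r}|Du|^{t_0})^{1/t_0}\le CM$ for some $t_0(d,\Lambda)>2$. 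Rescaling $B_r$ to the unit ball, \tref{mainthm} (with $\ep\sim1/r$, $t=t_0$, boundary data $u|_{\partial B_r}$) then applies on the event defining $\Y$ and gives $r^{-1}(\fint_{B_r}|u-v|^2)^{1/2}\le CMr^{-\delta}$, $v$ being the homogenized minimizer on $B_r$ with boundary data $u$. Since $v$ solves $-\div(D\bar L(Dv))=0$ and $\bar L(\cdot+p_r)$ is again uniformly convex, interior $C^{1,\beta_0}$ regularity ($\beta_0=\beta_0(d,\Lambda)$) gives an excess-decay $D_v(\theta r)\le C\theta^{\beta_0}D_v(r)$ for the analogous flatness functional $D_v$ of $v$; combining with the comparison bound and fixing $\theta=2^{-N(d,\Lambda)}$ small enough produces
\[
D(\theta r)\le\tfrac12 D(r)+CMr^{-\delta},\qquad \Y\le r\le R/2 .
\]
Iterating down from $D(R/2)\le CM$ (immediate from \eref{osccontrol2}), the geometric tail of error terms being dominated by its last term $\lesssim\Y^{-\delta}\le1$, gives $D(r)\le CM((r/R)^{\gamma_0}+r^{-\delta})\le CM$ with $\gamma_0:=\log 2/\log(1/\theta)$; telescoping $|p_\varrho-p_{2\varrho}|\le CD(2\varrho)$ then gives $|p_\varrho|\le CM$, closing the induction.

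To conclude \eref{c01est}, apply the nonlinear De Giorgi--Nash--Moser estimate to $u-\ell_r$, a minimizer of the uniformly convex functional $\int L(D\cdot+p_r,x)$ bounded in $L^2(B_r)$ by $CMr$: this gives $\|u-\ell_r\|_{L^\infty(B_{r/2})}\le C(\fint_{B_r}|u-\ell_r|^2)^{1/2}\le CMr$, whence $\osc_{B_{r/2}}u\le\osc_{B_{r/2}}(u-\ell_r)+r|p_r|\le CMr$, which is \eref{c01est} after adjusting radii. For \eref{c1best}, set $\beta:=\gamma_0\wedge\beta_0$; for $\gamma\le\beta$ and $r\in[R^{c\gamma},R/2]$ with $c=c(d,\Lambda,s)$ chosen so that $r^{-\delta}\le(r/R)^\gamma$ throughout this range, the bound $D(r)\le CM((r/R)^{\gamma_0}+r^{-\delta})$ becomes $D(r)\le CM(r/R)^\gamma$, and choosing $p=-p_r$ gives $\inf_{p\in\Rd}r^{-1}\osc_{B_r}(u(x)+p\cdot x)\le CD(r)\le C\Y M(r/R)^\gamma$ (the factor $\Y\ge1$ comfortably absorbing any loss), which is \eref{c1best}.

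The main obstacle is the bootstrap: the one-step comparison requires \emph{a priori} control at scale $r$ --- a $W^{1,t_0}$ bound on $u$ and a bound on the slope $p_r$ of its best affine approximant --- which is itself part of the conclusion, so the induction on scales must be organized to carry exactly these hypotheses, and one must check that the constants (including the growth constant of the translated Lagrangian $L(\cdot+p_r,x)$, which degrades like $|p_r|^2\lesssim M^2$) do not deteriorate through the $\sim\log_2(R/\Y)$ iterations. The remaining ingredients --- Caccioppoli and Meyers estimates and the nonlinear De Giorgi--Nash--Moser estimate for minimizers of uniformly convex functionals, interior $C^{1,\beta_0}$ regularity for $\int\bar L(D\cdot)$, and the fact that $\bar L$ satisfies (L2) with constants depending only on $(d,\Lambda)$ --- are classical and enter only as black boxes.
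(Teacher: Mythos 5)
Your approach---a quantitative Avellaneda--Lin perturbation, comparing $u$ at each scale to a homogenized minimizer via \tref{mainthm} and borrowing the latter's $C^{1,\beta}$ excess decay---is essentially the paper's. The paper abstracts the dyadic iteration into the deterministic \lref{blackbox2}, stated for an $L^\infty$ flatness functional, and supplies the $L^\infty$ comparison hypothesis via \cref{cor.DP}, which upgrades the $L^2$ bound of \tref{mainthm} by interpolation against the De Giorgi--Nash--Moser $C^{0,\gamma}$ estimate; your inlined iteration with an $L^2$ excess functional, De Giorgi--Nash--Moser applied only at the very end, is a correct reorganization of the same argument.

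One caution on the definition of $\Y$. As written, the event defining $\Y$ refers to the bound $\fint_{B_\rho(y)}|u-v_{y,\rho}|^2\le CM^2\rho^{2-2\delta}$, which depends on the particular $u$ (and on $v_{y,\rho}$, defined from $u$); this makes $\Y$ a functional of $u$ rather than a random variable on $(\Omega,\F)$, as required by the statement. \tref{mainthm} already packages the randomness into a single a~priori random variable $\X$ valid simultaneously for all $\ep\in(0,1]$ and all admissible boundary data $g$---hence for all scales $r$ and all competitors $u$---so the correct move is simply $\Y:=\X^{1/s_1}+C$ for a suitable $s_1\in(s,d)$ (the paper takes exactly this form with $\X$ from \cref{cor.DP}); no union bound over scales or centres is needed, and the choice $s_1>s$ is what makes the integrability estimate \eref{Yinteg} clean. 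Your ``main obstacle'' paragraph correctly identifies the induction-on-scales issue, and the paper's way of carrying only the $L^2$ bound (plus $K_0$) as the induction hypothesis, deriving the Meyers $W^{1,t}$ input internally at each scale, is indeed how one closes it.
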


It is appropriate to consider a coarsening of the $C^{0,1}$ seminorm by removing the effect of microscopic oscillations, as in the left side of~\eqref{e.c01est}, because the regularizing effect is due to the correlation structure of the coefficients: which of course cannot have influence on scales smaller than the correlation length scale. Here we coarsen down to a \emph{random} scale $\mathcal Y$. Of course, we may also coarsen at the unit scale if we allow the right side to be random, as~\eqref{e.c01est} implies
\begin{equation*}
\frac1r \osc_{B_r} u  \leq C\mathcal Y M \quad \mbox{for every} \ 1 \leq r \leq \frac12 R.
\end{equation*}
On the other hand, if the~$L$'s sampled by $\P$ are uniformly smooth, then~\eqref{e.c01est} implies a full $C^{0,1}$ estimate without the coarsening because in this case we may control the smaller scales by applying local Schauder estimates.

\smallskip

By the Caccioppoli inequality, the estimate~\eqref{e.c01est} also implies
\begin{equation*} \label{}
\sup_{\mathcal Y\leq r \leq R/4}  \fint_{B_r} \left| Du(x) \right|^2\, dx \leq C M^2.
\end{equation*}
This gives very strong control of the energy density of local minimizers down to the unit scale. In the linear setting, special cases of this kind of estimate (applied to modified correctors and the Green's functions) lie at the heart of the quantitative theory of Gloria and Otto~\cite{GO1,GO2} and Gloria, Neukamm and Otto~\cite{GNO,GNO2}. Theorem~\ref{t.c01} can therefore be used, together with spectral gap-type concentration inequalities, to obtain an alternative proof of the optimal quantitative estimates obtained in these papers.

\smallskip

The proof of Theorem~\ref{t.c01} is a nonlinear and quantitative version of an idea of Avellaneda and Lin~\cite{AL1,AL2}: since the heterogeneous energy functional is close to the homogenized functional on large scales, we can obtain higher regularity by treating minimizers of the former as a perturbations of those of the latter. This idea was formalized in~\cite{AL1,AL2} in the context of periodic media, via compactness arguments. In the stochastic setting here, we do not have compactness and therefore the perturbation argument must be more quantitative. The perfect tool is actually Theorem~\ref{t.mainthm}: what is needed is an algebraic rate of convergence in homogenization and strong control of the stochastic fluctuations. Note that the algebraic exponent in Theorem~\ref{t.mainthm} disappears ``into the constant" in Theorem~\ref{t.c01} (so it is irrelevant that the exponent is sub-optimal), but the strong stochastic integrability~$\Y$ is inherited from that of~$\X$.

\subsection{Outline of the paper}

Section~\ref{sec.qcc} is the heart of the paper, wherein we state and prove the core result, Theorem~\ref{t.mulimit}, on the convergence of the subadditive and superadditive energies. In Section~\ref{s.energies} we introduce the key concepts and make some preliminary observations in preparation for the proof of this result. We derive the error estimate for the Dirichlet problem in Section~\ref{s.DP} by reducing it to Theorem~\ref{t.mulimit}, and from it we obtain the stochastic $C^{0,1}$ estimate in Section~\ref{s.C01}.

\section{Subadditive and superadditive energies}
\label{s.energies}

The analysis in the first part of this paper is centered on two monotone quantities involving the energy. Up to normalizing factors, one is \emph{subadditive} and the other is \emph{superadditive}. The former was considered already in~\cite{DM1,DM2} and was the basis of the qualitative proof of homogenization there, while the latter is considered for the first time here. In this section we define these quantities and review some of their elementary properties, explain why they are convex duals of each other. We begin by reviewing some notation.

\subsection{Convention for constants}
Throughout the paper, unless otherwise indicated, the symbols $C$ and $c$ denote constants which depend on the dimension $d$ and the parameter $\Lambda$ in~(L2) and may vary in each occurrence.

\subsection{Suppressing the dependence on $L$} 
\label{ss.dropL}
Throughout, the probability space is~$(\Omega,\F,\P)$ and~$L$ denotes the canonical element of~$\Omega$ that is sampled according to~$\P$. Since it is cumbersome to display dependence on~$L$ in each of our quantities, we often suppress this dependence in our notation (for example, in the statement of Theorem~\ref{t.mainthm}, each of $u^\ep$, $u$ and $\mathcal X$ depend on $L$). However, the reader should keep in mind that any quantity implicitly defined in terms of~$L$ is random, and the symbols~$\P$ and~$\E$ should always be understood with respect to this randomness.

\subsection{Definition of the energy quantities $\mu$ and $\nu$}
\label{ss.defs}

For each $p,q\in \Rd$, bounded open subset $U\subseteq \Rd$ and $L\in\l$, we define the two quantities
\begin{equation*} \label{}
\mu(U,q,L) := \min\left\{ \fint_U \left( L(Dw(x),x) - q\cdot Dw(x) \right) \, dx \,:\, w\in H^1(U) \right\}
\end{equation*}
and
\begin{equation*} \label{}
\nu(U,p,L):= \min \left\{ \fint_U L(Dw(x),x)\, dx \,:\, w - \ell_p\in H^1_0(U)  \right\},
\end{equation*}
where $\ell_p$ denotes the plane $\ell_p(x):= p\cdot x$. The quantity $\nu$ was introduced by Dal Maso and Modica~\cite{DM1,DM2} and was central to their proof of qualitative homogenization. Note that, for every $p\in\Rd$, $U\subseteq\Rd$ and $L\in\l$,
\begin{equation} \label{e.munuord}
\sup_{q\in\Rd} \left(q\cdot p + \mu(U,q,L) \right) \leq  \nu(U,p,L).
\end{equation}

\smallskip

To build some intuition for $\mu$ and $\nu$, and to see that they are convex dual to each other in some sense, we examine the case that $L$ has no spatial dependence, i.e., $L(p,x)=L(p)$. The values of $\mu$ and $\nu$ are then easy to compute, as there is no dependence on $U$ and the integrals may be removed. We obtain
\begin{equation*} \label{}
\mu(U,q,L) = \mu(q,L) = \min_{p\in\Rd} \left( L(p) - q\cdot p \right) \quad \mbox{and} \quad \nu(U,p,L) = \nu(p,L) = L(p).
\end{equation*}
The Legendre transform $L^*$ of $L$ may therefore be written as
\begin{equation*} \label{}
L^*(q) = \sup_{p\in\Rd} \left( p\cdot q - L(p) \right) = - \mu(q,L).
\end{equation*}
We thus observe that~$\nu$ identifies $L$ while~$\mu$ naturally identifies $L^*$. We may also write~$L$ in terms of~$\mu$ by duality:
\begin{equation} \label{e.Lbymu}
L(p) = \sup_{q\in\Rd} \left( p\cdot q - L^*(q) \right) = \sup_{q\in\Rd} \left( p\cdot q + \mu(q,L) \right).
\end{equation}
In Section~\ref{s.idenL}, we generalize~\eqref{e.Lbymu} to the stochastic case.

\smallskip

As mentioned in the previous subsection, we usually suppress the dependence of $\mu$ and $\nu$ on $L$, unless required for clarity, by writing $\mu(U,q)$ and $\nu(U,p)$. However, the reader should keep in mind that these quantities are random variables. 

\smallskip

We note that $\mu(U,q)$ is well-defined and finite. Indeed, by~(P3) we have
\begin{equation} \label{e.mubounds0}
\P \left[ \mbox{for every bounded, open} \ U\subseteq\Rd, \ -2(K_0+|q|)^2  \leq \mu(U,q) \leq K_0 \right] = 1.
\end{equation}
The first inequality holds because the left side (rather crudely) bounds $L(p,x)-q\cdot p$ from below, uniformly in~$p$, on the support of $\P$, which we see from the first inequality in~(P3). We get the second inequality in~\eqref{e.mubounds} by taking zero as a test function in the definition of~$\mu$ and using the second inequality in~(P3). In particular, as $K_0\geq 1$, we obtain
\begin{equation} \label{e.mubounds}
 \left| \mu(U,q) \right| \leq 2(K_0+|q|)^2 \quad \mbox{$\P$--a.s.}
\end{equation}
A similar argument as the one for~\eqref{e.mubounds} leads to the bound
\begin{equation} \label{e.nubounds}
|p|^2 - K_0\left(1+|p| \right) \leq \nu(U,p) \leq \Lambda |p|^2 + K_0\left( 1 + |p| \right) \quad \mbox{$\P$--a.s.}
\end{equation}
Here we used Jensen's inequality in the expression for the energy of a minimizer to get the left side, and test with the zero function to get the right side. 

\subsection{The minimizers $u$ and $v$}
Up to an additive constant, the minimization problem in the definition of $\mu$ has a unique minimizer in $H^1(U)$. (The uniqueness of the minimizer follows from uniform convexity, see Lemma~\ref{l.convexL2} below.) We denote the unique minimizer which has mean zero on every connected component of $U$ by $u(\cdot,U,q)$. In other words, $u(\cdot,U,q)$ is the unique element of $H^1(U)$ satisfying
\begin{equation*} \label{}
\left\{ 
\begin{aligned}
& \fint_{U} \left( L(Du(x,U,q),x)  - q\cdot Du(x,U,q) \right)\, dx \\
& \quad \leq \fint_{U} \left( L(Dw(x),x) - q\cdot Dw(x) \right)\, dx && \forall\, w\in H^1(U),\\
& \fint_V u(x,U,q)\,dx= 0 && \forall \, V\subseteq U \ \mbox{with} \ V \ \mbox{and} \ U \setminus V \ \mbox{open.}
\end{aligned}
\right.
\end{equation*}
We denote the spatial average of~$Du(\cdot,U,q)$ by
\begin{equation} \label{e.P}
P(U,q):= \fint_U Du(x,U,q) \, dx.
\end{equation}

We let $v(\cdot,U,p)$ denote the minimizer for $\nu(U,p)$, that is, the unique function in $H^1(U)$ satisfying 
\begin{equation*} \label{}
\left\{ 
\begin{aligned}
& \fint_{U} L\left(Dv(x,U,p),x\right) \, dx \leq \fint_{U} L\left(Dv(x,U,p)+Dw(x),x\right)\,dx && \forall  w\in H^1_0(U),\\
& v(\cdot,U,p) - \ell_p \in H^1_0(U).
\end{aligned}
\right.
\end{equation*}
We stress once again that $u(\cdot,U,q)$, $v(\cdot,U,p)$ and $P(U,q)$ are random elements, as they depend on~$L$.

\subsection{Notation for cubes}
\label{ss.cubes}
For each $x\in \Rd$ and $n\in\N_*$, we define the triadic cube
\begin{equation*} \label{}
Q_n(x) := 3^n \lfloor 3^{-n} x \rfloor + \left(-\frac12 3^n,\frac12 3^n\right)^d.
\end{equation*}
Here $\lfloor r \rfloor$ denotes, for $r\in \R$, the largest integer not greater than $r$ and, for a point $x=(x_1,\ldots,x_d)\in\Rd$, we set $\lfloor x \rfloor:=\left( \lfloor x_1 \rfloor,\ldots,\lfloor x_d \rfloor \right)$. We have centered each cube $Q_n(x)$ at a point on the lattice $3^n\Zd$ in order to keep them disjoint. In particular, we note that $Q_n(x)$ is not necessarily centered at $x$ and, neglecting a subset of Lebesgue measure zero (the boundary of the open cubes), we see that $y\in Q_n(x)$ if and only if $Q_n(x) = Q_n(y)$. For $m,n\in\N_*$, $Q_{n+m}$ is the disjoint union, up to a zero measure set, of $3^{dm}$ cubes of the form $Q_n(x)$. We write $Q_n:= Q_n(0)$. 

\smallskip

We also define the \emph{trimmed} triadic cube by 
\begin{equation*} \label{}
Q_n^\circ(x):= 3^n \lfloor 3^{-n} x \rfloor + \left(-\frac12\left(3^n-1\right), \frac12\left(3^n-1\right) \right)^d.
\end{equation*} 
The trimmed cube $Q^\circ_n(x)$ is obtained from $Q_n(x)$ by removing a layer of thickness $1/2$ from each face. The reason we have trimmed this layer near the boundary is that it ensures that the trimmed cubes are separated by a unit distance from each other, which is convenient when we work with the independence assumption~(P2). We set $Q^\circ_n:=Q^\circ_n(0)$.

\smallskip

For future reference we note that the proportion of volume occupied by the trimmed region is of order $3^{-n}$ since, for any $n,m\in\N$,
\begin{align*} \label{}
|Q_{n+m}| - 3^{dm} |Q^\circ_n| & = 3^{d(n+m)} - 3^{dm}\left( 3^n-1 \right)^d   \leq C3^{d(n+m)-n}
\end{align*}
which implies
\begin{equation} \label{e.trim}
\frac{3^{dm}\left|Q^\circ_{n}\right|}{\left|Q_{n+m}\right|} \geq 1-C3^{-n}
\end{equation}

\smallskip

We introduce a third family of cubes $\tilde Q_{n+1}$ defined for $n\in\N_*$ by
\begin{equation*} \label{}
\tilde Q_{n+1}(x) := 3^n \left\lfloor 3^{-n} x \right \rfloor + Q_{n+1}.
\end{equation*}
Thus $\tilde Q_{n+1}(x)$ is the cube centered at the same point as $Q_n(x)$, but with side lengths three times larger. These cubes are not disjoint and each cube of the form $\tilde Q_{n+1}(x)$ intersects~$5^d-1$ others. Note that $\tilde Q_{n+1}(x)$ is the translation by an element of $3^n\Zd$ of the cube $Q_{n+1}$, and thus~$\P$ has the same statistical properties in these cubes by the stationarity assumption. We use this family of cubes when we compare the energy at different scales in Lemma~\ref{l.patching}, as the argument there requires some overlapping cubes in the construction.

\subsection{Monotonicity of $\mu$ and $\nu$ and the definition of $\overline L$}

The quantity $\mu$ has a monotonicity property which is immediate from its definition, obtained by simply restricting the minimizers of larger regions to smaller ones. Namely, the map $U \mapsto |U| \mu(U,L,q)$ is \emph{superadditive}, by which we mean that, for all collections of pairwise disjoint bounded open subsets $U_1,\ldots,U_k\subseteq \Rd$ and every open set $U\subseteq \Rd$ such that 
\begin{equation} \label{e.UUk}
U_1\cup \cdots \cup U_k \subseteq U \quad \mbox{and} \quad \left| U \setminus (U_1\cup \cdots \cup U_k) \right| = 0,
\end{equation}
we have
\begin{equation} \label{e.musuperadd}
\mu(U,q) \geq \sum_{j=1}^k \frac{|U_j|}{|U|} \mu(U_j,q).
\end{equation}
In other words, $\mu(U,q)$ is bounded below by a weighted average of $\{\mu(U_j,q)\}_{j=1}^k$. To obtain~\eqref{e.musuperadd}, we note that, for each $j$, 
\begin{equation*} \label{}
\int_{U_j} \left(L\left(Du(x,U,q),x\right) -q\cdot Du(x,U,q)\right)dx \geq |U_j| \mu(U_j,q)
\end{equation*}
and then sum over $j\in \{1,\ldots,k\}$.

\smallskip

The superadditivity of $\mu$ implies that $\E \left[ \mu(Q_n,q) \right]$ is a monotone nondecreasing sequence in~$n$. Indeed, recall that $Q_{n+m}$ is the disjoint union of $3^{dm}$ cubes of the form $Q_n(x)$, up to a zero measure set, and therefore~\eqref{e.musuperadd} gives
\begin{equation} \label{e.mono}
\mu(Q_{n+m},q) \geq 3^{-dm} \sum_{Q_n(x) \subseteq Q_{n+m}} \mu(Q_n(x),q).
\end{equation}
Taking expectations gives
\begin{equation} \label{e.Emono}
\E \left[ \mu(Q_{n+m},q) \right] \geq \E \left[ \mu(Q_n,q) \right].
\end{equation}
In view of~\eqref{e.mubounds}, the quantity
\begin{equation} \label{e.mubardef}
\overline \mu(q):= \sup_{ n\in \N} \E \left[ \mu(Q_n,q) \right] 
\end{equation}
is finite and hence, by~\eqref{e.Emono}, we have the limit
\begin{equation} \label{e.monolim}
\lim_{n\to \infty} \E \left[ \mu(Q_n,q) \right] = \overline \mu(q). 
\end{equation}

\smallskip

We next recall from~\cite{DM2} that, for each $p\in\Rd$, the quantity $U\mapsto |U|\nu(U,p)$ is \emph{subadditive}, i.e., 
for all collections of pairwise disjoint bounded open subsets $U_1,\ldots,U_k\subseteq \Rd$ and open $U\subseteq \Rd$ satisfying~\eqref{e.UUk}, we have
\begin{equation} \label{e.nusubadd}
\nu(U,p) \leq \sum_{j=1}^k \frac{|U_j|}{|U|} \nu(U_j,p).
\end{equation}
This holds because a candidate for a minimizer for $\nu(\cdot,p)$ in $U$ can be obtained by assembling the minimizers in each of the $U_j$'s. Similarly to~\eqref{e.mono} and~\eqref{e.Emono}, specializing to the triadic cubes we deduce that, for every $m,n\in\N$ and $p\in\Rd$,
\begin{equation} \label{e.mono.nu}
\nu(Q_{n+m},p) \leq 3^{-dm} \sum_{Q_n(x) \subseteq Q_{n+m}} \nu(Q_n(x),p)
\end{equation}
and taking expectations and applying stationarity yields 
\begin{equation} \label{e.Emono.nu}
\E \left[ \nu(Q_{n+m},p) \right] \leq \E \left[ \nu(Q_n,p) \right].
\end{equation}
We define the \emph{effective Lagrangian} $\overline L:\Rd \to \R$ by
\begin{equation} \label{e.Lbar}
\overline L(p) := \inf_{n\in\N} \E \left[ \nu(Q_n,p) \right].
\end{equation}
Note that this is the same definition for $\overline L$ given in~\cite{DM1,DM2}. By~\eqref{e.Emono.nu}, we have 
\begin{equation} \label{e.monolimnu}
\lim_{n\to \infty} \E\left[ \nu(Q_n,p) \right] = \overline L(p).
\end{equation}

\subsection{Comparisons between $\E[\mu(Q_n,q)]$ and $\E[\mu(Q_n^\circ,q)]$}

For our reference, we record here a few observations concerning the expectation of $\mu$ in the trimmed and untrimmed dyadic cubes. We first note that~$\mu$ is also monotone with respect to the trimmed cubes, up to a small error. We have:
\begin{equation} \label{e.trimsa}
\mu(Q_{n+m}^\circ,q) \geq 3^{-dm} \sum_{Q_n^\circ(x) \subseteq Q_{n+m}} \mu(Q_n^\circ(x),q) - C\left( K_0 + |q| \right)^2 3^{-n} \quad \mbox{$\P$--a.s.}
\end{equation}
To obtain~\eqref{e.trimsa}, we note that in view of the remarks in Section~\ref{ss.cubes}, we may write $Q_{m+n}^\circ$, up to a zero measure set, as the union of $3^{dm}$ cubes of the form $Q^\circ_n(x)\subseteq Q_{m+n}^\circ$ and an open set of measure at most $C3^{m}$. We then deduce~\eqref{e.trimsa} from~\eqref{e.mubounds},~\eqref{e.trim} and~\eqref{e.musuperadd}. Taking expectations and using stationarity gives
\begin{equation} \label{e.Etrimmedmono}
\E \left[ \mu( Q^\circ_{m+n},q) \right]  \geq \E \left[ \mu(Q^\circ_{n},q) \right]  - C(K_0+|q|)^2 3^{-n}.
\end{equation}
By a similar argument, we obtain
\begin{equation} \label{e.trimsanu}
\nu(Q_{n+m}^\circ,p) \leq 3^{-dm} \sum_{Q_n^\circ(x) \subseteq Q_{n+m}} \nu(Q_n^\circ(x),q) + C\left( K_0 + |p| \right)^2 3^{-n} \quad \mbox{$\P$--a.s.}
\end{equation}

\smallskip

It is also useful to have some comparison between $\mu$ and $\nu$ in the trimmed and untrimmed cubes. By $| Q_n\setminus \overline{Q}_n^\circ| \leq 3^{-n}|Q_n|$,~\eqref{e.mubounds} and~\eqref{e.musuperadd}, we have
\begin{equation} \label{e.Ecubes}
\mu(Q_n^\circ,q) \leq \mu(Q_n,q) + C(K_0+|q|)^2 3^{-n} \quad \mbox{$\P$--a.s.} 
\end{equation}
Similarly, by~\eqref{e.nubounds} and~\eqref{e.nusubadd}, we have
\begin{equation} \label{e.Ecubesnu}
\nu(Q_n^\circ,p) \geq \nu(Q_n,p) - C(K_0+|p|)^2 3^{-n} \quad \mbox{$\P$--a.s.} 
\end{equation}
We also need an inequality bounding $\E\left[ \mu(Q_n,q) \right]$ from above by $\E\left[ \mu(Q_n^\circ,q) \right]$:
\begin{multline} \label{e.infamous}
\E\left[ \mu(Q_n,q) \right] \\ \leq \E \left[ \mu(Q_n^\circ,q) \right] + 3^d \left( \E \left[ \mu(Q_{n+1}^\circ,q) \right] -\E \left[ \mu(Q_n^\circ,q) \right] + C(K_0+|q|)^23^{-n}\right). 
\end{multline}
To get this, observe that the cube $Q_{n+1}^\circ$ is the disjoint union (up to a set of measure zero) of the untrimmed cube $Q_n$, $3^d-1$ trimmed cubes of the form $Q_n^\circ(x)$ and a set of measure at most $C3^{n(d-1)}$. We obtain~\eqref{e.infamous} after applying superadditivity, stationarity and~\eqref{e.mubounds} to this partition.

\subsection{Basic energy estimates}
In this subsection we record two simple consequences of the uniform convexity assumption~(L2) which are used repeatedly in the paper. The first is the following lemma, which gives gradient estimates for functions which are close to minimizers. The lemma is classical, e.g., it is almost the same as~Giaquinta~\cite[Chapter IX, Lemma 4.1]{G}.

\begin{lemma}
\label{l.convexL2}
For every $L\in\l$, $q\in\Rd$, bounded open $U\subseteq \Rd$ and $w,\xi\in H^1(U)$,
\begin{multline*} \label{}
\fint_{U} \left| Dw(x) - D\xi(x) \right|^2 \, dx \\
\leq 2 \bigg( \fint_{U} \big(L(Dw(x),x) - q\cdot Dw(x)\big)\,dx + \fint_{U}  \big(L(D\xi(x),x) - q\cdot D\xi(x) \big) \, dx \\  - 2\mu(U,q,L) \bigg).
\end{multline*}
\end{lemma}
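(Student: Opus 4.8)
The plan is to exploit the uniform convexity assumption (L2) applied pointwise to the affine-in-$p$ function $p\mapsto L(p,x)-q\cdot p$, which inherits the same lower quadratic bound as $L(\cdot,x)$ since the linear term $-q\cdot p$ does not affect second differences. Concretely, for each fixed $x$ and any $p_1,p_2\in\Rd$, the lower bound in (L2) gives
\begin{equation*}
\tfrac14|p_1-p_2|^2 \leq \tfrac12\bigl(L(p_1,x)-q\cdot p_1\bigr) + \tfrac12\bigl(L(p_2,x)-q\cdot p_2\bigr) - \Bigl(L\bigl(\tfrac12 p_1+\tfrac12 p_2,x\bigr)-q\cdot\bigl(\tfrac12 p_1+\tfrac12 p_2\bigr)\Bigr).
\end{equation*}
First I would apply this with $p_1 = Dw(x)$ and $p_2 = D\xi(x)$ and integrate the average over $U$. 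This yields
\begin{equation*}
\tfrac14\fint_U|Dw-D\xi|^2\,dx \leq \tfrac12\fint_U\bigl(L(Dw,x)-q\cdot Dw\bigr)dx + \tfrac12\fint_U\bigl(L(D\xi,x)-q\cdot D\xi\bigr)dx - \fint_U\Bigl(L\bigl(\tfrac{Dw+D\xi}{2},x\bigr)-q\cdot\tfrac{Dw+D\xi}{2}\Bigr)dx.
\end{equation*}

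Next I would observe that the midpoint function $\tfrac12(w+\xi)$ lies in $H^1(U)$ and therefore is an admissible competitor in the minimization problem defining $\mu(U,q,L)$; since $D\bigl(\tfrac12(w+\xi)\bigr) = \tfrac12(Dw+D\xi)$ a.e., we get
\begin{equation*}
\fint_U\Bigl(L\bigl(\tfrac{Dw+D\xi}{2},x\bigr) - q\cdot\tfrac{Dw+D\xi}{2}\Bigr)dx \geq \mu(U,q,L).
\end{equation*}
Substituting this lower bound (with its sign flipped, so it becomes an upper bound on $-$ that term) into the previous inequality and multiplying through by $4$ produces exactly the claimed estimate. That is the whole argument: apply (L2) pointwise, average, and use the midpoint as a test function in the definition of $\mu$.

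There is essentially no obstacle here — the only thing to be careful about is that (L2) holds for the translated functional $L(\cdot,x)-q\cdot(\cdot)$ with the same constant $\tfrac14$ on the left (the affine perturbation cancels in the second-order difference), and that $\tfrac12(w+\xi)\in H^1(U)$ with no boundary constraint imposed, which is why it is a legitimate competitor for $\mu$ (whose minimization is over all of $H^1(U)$, not a subspace). As a side remark, the uniqueness up to additive constants of the minimizer $u(\cdot,U,q)$ asserted in Section 2.4 follows immediately from this lemma: if $w$ and $\xi$ are both minimizers, the right-hand side vanishes, forcing $Dw = D\xi$ a.e., hence $w-\xi$ is locally constant. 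I would include that observation as a one-line corollary, or simply note it in passing.
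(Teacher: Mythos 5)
Your proof is correct and follows exactly the same route as the paper's: apply the uniform convexity inequality (L2) pointwise at the midpoint $\tfrac12(w+\xi)$, integrate over $U$, use that $\tfrac12(w+\xi)\in H^1(U)$ is an admissible competitor for $\mu(U,q,L)$, and rearrange. Your side remark that the lemma yields uniqueness of the $\mu$-minimizer up to additive constants is also accurate and is the reason the paper can define $u(\cdot,U,q)$ unambiguously.
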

\begin{proof}
Set $\zeta:= \frac12 w + \frac12 \xi \in H^1(U)$. Using~(L2), we compute
\begin{align*} \label{}
\mu(U,q,L) & \leq \fint_{U} \big( L(D\zeta (x),x) - q\cdot D\zeta(x) \big) \, dx \\
& \leq \frac12 \fint_{U} \big(L(Dw(x),x) - q\cdot Dw(x) \big) \, dx  \\
& \qquad + \frac12 \fint_{U} \big( L(D\xi(x),x) - q\cdot D\xi(x) \big) \, dx  -\frac14 \fint_{U} \left| Dw(x) - D\xi(x) \right|^2 \, dx.
\end{align*}
A rearrangement of the previous inequality gives the lemma. 
\end{proof}

In some arguments in the next section, we apply~Lemma~\ref{l.convexL2} in the case that $w=u(\cdot,U,q)$ and $\xi=u(\cdot,V,q)$ for open sets $U$, $V$ satisfying $U\subseteq V$ and $|V\setminus U| = 0$ (i.e., $V$ is the interior of the closure of the disconnected set $U$). The conclusion of the lemma gives 
\begin{equation} \label{e.convexUV}
\fint_{U} \left| Du(x,U,q) - Du(x,V,q) \right|^2 \, dx \leq 2 \left( \mu(V,q) - \mu(U,q) \right).
\end{equation}
A particular case of~\eqref{e.convexUV} allows us to compare the gradients of minimizers in the trimmed and untrimmed cubes. Write $U:= Q_n^\circ \cup (Q_n \setminus Q^\circ_n)$, apply the inequality with $V=Q_n$ and then use~\eqref{e.mubounds}~\eqref{e.trim},~\eqref{e.musuperadd} and to get 
\begin{align*}
\lefteqn{ \fint_U \left| Du(x,U,q) - Du(x,Q_n,q) \right|^2\,dx } \qquad & \\
& \leq C \left( \mu(Q_n,q) - \mu(U,q)  \right)  \\
& \leq C \left( \mu(Q_n,q) - \frac{|Q_n^\circ|}{|Q_n|} \mu(Q_n^\circ,q) -  \frac{|Q_n\setminus Q_n^\circ|}{|Q_n|} \mu(Q_n\setminus Q_n^\circ,q) \right) \\
& \leq C \left( \mu(Q_n,q) - \mu(Q_n^\circ,q) \right) + C \left( K_0^2 + |q|^2 \right) 3^{-n}.
\end{align*} 
Taking expectations and using that 
\begin{equation*}
u(\cdot,U,q) \vert_{Q_n^\circ} \equiv u(\cdot,Q_n^\circ,q) \quad \mbox{and} \quad u(\cdot,U,q) \vert_{Q_n\setminus Q_n^\circ} \equiv u(\cdot,Q_n\setminus Q_n^\circ,q),
\end{equation*}
as well as~\eqref{e.mubounds} and~\eqref{e.trim} again, we get
\begin{multline}
\label{e.trimsubject}
\frac{1}{|Q_n|} \int_{Q_n^\circ}\left| Du(x,Q_n^\circ,q) - Du(x,Q_n,q) \right|^2\,dx \\ + \frac{1}{|Q_n|} \int_{Q_n\setminus Q_n^\circ} \left| Du(x,Q_n,q) \right|^2 \,dx  \\
\leq C\left(\E\left[ \mu(Q_n,q)\right] - \E\left[ \mu(Q_n^\circ,q) \right]\right) + C \left( K_0^2 + |q|^2 \right) 3^{-n}.
\end{multline}

Another consequence of Lemma~\ref{l.convexL2} is the gradient bound
\begin{equation} \label{e.vgradbnd}
\fint_{U} \left| Du(x,U,q) \right|^2 \, dx \leq 6(K_0+|q|)^2 \quad \mbox{$\P$--a.s.}
\end{equation}
This we get by comparing $u(\cdot,U,q)$ to the zero function, using~(P3) and~\eqref{e.mubounds}. 

\smallskip

We also have the following variation of Lemma~\ref{l.convexL2} from a nearly identical argument: for every $w,\xi \in H^1(U)$ such that $\frac12 w+\frac12 \xi -\ell_p\in H^1_0(U)$, we have
\begin{multline} \label{e.convexL2nu}
\fint_{U} \left| Dw(x) - D\xi(x) \right|^2 \, dx \\ \leq 2 \left( \fint_{U} \left(L(Dw(x),x) + L(D\xi(x),x) \right) \, dx - 2\nu(U,p) \right).
\end{multline}
(Recall that $\ell_p$ is the plane $\ell_p(x) = p\cdot x$.) Comparing $v(\cdot,U,p)$ to~$\ell_p$, applying~(P3) and~\eqref{e.convexL2nu} and using the triangle inequality, we get
\begin{equation} \label{e.vgradbndnu}
\fint_{U} \left| Dv(x,U,p)  \right|^2 \, dx \leq C\left( K_0+|p| \right)^2 \quad \mbox{$\P$--a.s.}
\end{equation}

\smallskip

The second consequence of uniform convexity is kind of converse of Lemma~\ref{l.convexL2} which allows us to perturb minimizers without increasing the energy too much.

\begin{lemma}
\label{l.converseL2}
For every $L\in\l$, $q\in\Rd$, bounded open $U\subseteq \Rd$ and $w,\xi\in H^1(U)$,
\begin{multline*} \label{}
 \fint_{U} \big( L(Dw(x),x)  - q\cdot Dw(x) \big) \, dx \\ 
 \leq 2 \fint_{U} \big( L(D\xi(x),x) - q\cdot D\xi(x) \big)\, dx - \mu(U,q,L) + 2\Lambda \fint_{U} \left| Dw(x) - D\xi(x) \right|^2 \, dx.
\end{multline*}
\end{lemma}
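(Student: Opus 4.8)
The plan is to exploit uniform convexity from (L2) in the form of the upper bound $\frac12 L(p_1,x)+\frac12 L(p_2,x) - L\!\left(\frac12 p_1+\frac12 p_2,x\right) \le \frac{\Lambda}{4}|p_1-p_2|^2$, which is precisely the ``semiconcavity'' half of the estimate; this is the inequality that lets one pass from a convex combination of gradients back to the individual gradients while paying only a quadratic error. Set $\zeta := \frac12 w + \frac12 \xi \in H^1(U)$, so that $D\zeta = \frac12 Dw + \frac12 D\xi$. Since $\zeta \in H^1(U)$ is an admissible competitor in the definition of $\mu(U,q,L)$, we have $\mu(U,q,L) \le \fint_U \big(L(D\zeta(x),x) - q\cdot D\zeta(x)\big)\,dx$. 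This is the one place we use that $\mu$ has no boundary constraint (so $\zeta$ is automatically admissible); no analogue of the constraint $\frac12 w + \frac12\xi - \ell_p \in H^1_0(U)$ is needed here, unlike in~\eqref{e.convexL2nu}.

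Next I would rearrange the pointwise convexity inequality applied to $p_1 = Dw(x)$, $p_2 = D\xi(x)$ to isolate $L(Dw(x),x)$:
\begin{equation*}
L(Dw(x),x) \le 2L(D\zeta(x),x) - L(D\xi(x),x) + \frac{\Lambda}{2}\left| Dw(x) - D\xi(x) \right|^2.
\end{equation*}
Integrating this over $U$ (with the $\fint$ normalization), subtracting $q\cdot Dw(x)$, and using the linear identity $q\cdot Dw = 2\, q\cdot D\zeta - q\cdot D\xi$ to rewrite the linear term consistently, one obtains
\begin{equation*}
\fint_U \big(L(Dw,x) - q\cdot Dw\big)\,dx \le 2\fint_U \big(L(D\zeta,x) - q\cdot D\zeta\big)\,dx - \fint_U \big(L(D\xi,x) - q\cdot D\xi\big)\,dx + \frac{\Lambda}{2}\fint_U |Dw - D\xi|^2\,dx.
\end{equation*}
Then I would bound the first term on the right using $\fint_U (L(D\zeta,x) - q\cdot D\zeta)\,dx \ge \mu(U,q,L)$ — wait, the inequality goes the wrong way, so instead I keep $2\fint_U(L(D\zeta,x)-q\cdot D\zeta)\,dx$ and bound it from above directly by re-expanding $L(D\zeta,x)$ via the \emph{lower} convexity bound, or more simply: use the midpoint bound $L(D\zeta,x) \le \frac12 L(Dw,x) + \frac12 L(D\xi,x)$ only after the $\mu$ has been inserted. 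The cleanest route is: from $\mu(U,q,L) \le \fint_U(L(D\zeta,x) - q\cdot D\zeta)\,dx$ and the pointwise estimate $L(D\zeta,x) \le \frac12 L(Dw,x) + \frac12 L(D\xi,x) - \frac14|Dw-D\xi|^2$ (the lower half of (L2)), deduce a \emph{lower} bound for $\fint_U(L(Dw,x)-q\cdot Dw)\,dx$; but we want an upper bound. So instead I combine: start from the displayed rearranged inequality above, and bound $2\fint_U(L(D\zeta,x)-q\cdot D\zeta)\,dx \le 2\fint_U(L(D\xi,x)-q\cdot D\xi)\,dx + \frac{3\Lambda}{2}\fint_U|Dw-D\xi|^2\,dx$ using semiconcavity once more around $\xi$; adding the $-\mu$ term comes from replacing one copy of $\fint_U(L(D\xi,x)-q\cdot D\xi)\,dx$ by $\mu(U,q,L)$ via... this needs care with signs.

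The main obstacle, and the only genuinely delicate point, is the bookkeeping of signs and which half of (L2) to apply where, so that the $\mu(U,q,L)$ term enters with a minus sign and the final $\Lambda$-constant comes out as $2\Lambda$ (which is the loose constant the statement asks for, giving room to spare). The correct combination is: (i) use semiconcavity to write $2\fint_U(L(D\zeta,x)-q\cdot D\zeta)\,dx \le \fint_U(L(Dw,x)-q\cdot Dw)\,dx + \fint_U(L(D\xi,x)-q\cdot D\xi)\,dx + \frac{\Lambda}{2}\fint_U|Dw-D\xi|^2\,dx$; wait — that reintroduces the term we want to bound. The resolution is to go the other direction entirely: the genuine argument compares $w$ to $\zeta$ via Lemma~\ref{l.convexL2} (with $\xi$ there replaced by $\zeta$) to control $\fint_U|Dw - D\zeta|^2 = \frac14\fint_U|Dw-D\xi|^2$ in terms of energies, then estimates $\fint_U(L(Dw,x)-q\cdot Dw)\,dx$ by comparing with $\fint_U(L(D\zeta,x)-q\cdot D\zeta)\,dx$ plus a Lipschitz-type error $\Lambda\fint_U|Dw-D\zeta|\cdot(\text{stuff})$ coming from $[D_pL(\cdot,x)]_{C^{0,1}}\le\Lambda$, and finally inserts $\mu(U,q,L) \le \fint_U(L(D\zeta,x)-q\cdot D\zeta)\,dx$ but with the roles arranged so $\zeta$'s energy is \emph{minorized} by $\mu$. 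Concretely: $\fint_U(L(D\zeta,x)-q\cdot D\zeta)\,dx \le \fint_U(L(D\xi,x)-q\cdot D\xi)\,dx$ is \emph{false} in general, so one instead writes $\fint_U(L(D\zeta,x)-q\cdot D\zeta)\,dx \le \frac12\fint_U(L(Dw,x)-q\cdot Dw) + \frac12\fint_U(L(D\xi,x)-q\cdot D\xi)$ (convexity, midpoint) and feeds this back after having a $-\mu$ available. I expect the whole lemma to drop out in three or four lines once the right pairing is fixed; the routine quadratic estimates are immediate, and I would not belabor them.
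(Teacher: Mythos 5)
Your proposal does not close. The difficulty is exactly where you say: with $\zeta := \tfrac12 w + \tfrac12\xi$, i.e.\ with $\zeta$ placed at the \emph{midpoint} of $w$ and $\xi$, the only thing the definition of $\mu$ gives you is $\mu(U,q,L) \le \fint_U (L(D\zeta,x)-q\cdot D\zeta)\,dx$, which is a \emph{lower} bound on the $\zeta$-energy; but after rearranging your displayed ``semiconcavity'' inequality, the $\zeta$-energy appears on the right with a $+$ sign, so a lower bound on it never produces the $-\mu(U,q,L)$ you need. You notice this (``the inequality goes the wrong way''), but the subsequent attempts to patch it — re-expanding $L(D\zeta,x)$ via the other half of (L2), or a hybrid with the $C^{0,1}$ Lipschitz bound on $D_pL$ — either cancel the term you want to bound (so the net conclusion is the vacuous $0\le 0$) or never actually introduce $-\mu$. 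What is missing is the right choice of the auxiliary function.

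The paper's proof chooses $\zeta := 2\xi - w$, so that $\xi$ (not $\zeta$) is the midpoint: $\xi = \tfrac12 w + \tfrac12 \zeta$. Then the semiconcavity half of (L2), applied to the pair $(Dw,D\zeta)$, reads
\begin{equation*}
L(D\xi(x),x) \ge \tfrac12 L(Dw(x),x) + \tfrac12 L(D\zeta(x),x) - \tfrac{\Lambda}{4}|Dw(x)-D\zeta(x)|^2.
\end{equation*}
Adding the linear term, integrating, and then using $\fint_U (L(D\zeta,x)-q\cdot D\zeta)\,dx \ge \mu(U,q,L)$ — which now sits with a $+$ coefficient on the minorized side — yields, after noting $Dw - D\zeta = 2(Dw-D\xi)$,
\begin{equation*}
\fint_U (L(D\xi,x) - q\cdot D\xi)\,dx \ge \tfrac12 \fint_U (L(Dw,x)-q\cdot Dw)\,dx + \tfrac12\mu(U,q,L) - \Lambda\fint_U |Dw-D\xi|^2\,dx,
\end{equation*}
and rearrangement gives the lemma with the exact constant $2\Lambda$. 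The entire content is in picking $\zeta$ so that the thing whose energy you want to bound from above ($w$) and the unconstrained competitor ($\zeta$) are symmetric about $\xi$; once that is done, the sign of $\mu$ falls out automatically and there is no bookkeeping left. Your proposal never makes this choice and therefore never has ``a $-\mu$ available,'' despite gesturing in that direction.
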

\begin{proof}
We set $\zeta := 2\xi - w \in H^1(U)$ so that $\xi = \frac12 w + \frac12 \zeta$ and then use~(L2) to get
\begin{align*} \label{}
\lefteqn{ \fint_U \big( L(D\xi(x),x)  - q\cdot D \xi(x) \big)\, dx } \qquad & \\
& \geq \frac12 \fint_U \big( L(Dw(x),x) - q\cdot Dw(x) \big) \,dx  + \frac12 \fint_U\big( L(D\zeta(x),x)  - q\cdot D\zeta(x) \big)\, dx \\
& \qquad  - \frac\Lambda4 \fint_U \left| Dw(x) - D\zeta(x) \right|^2\, dx  \\
& \geq  \frac12 \fint_U \big( L(Dw(x),x) -q\cdot Dw(x) \big) \,dx  + \frac12 \mu(U,q,L)  \\
& \qquad - \Lambda \fint_U \left| Dw(x) - D\xi(x) \right|^2\, dx.
\end{align*}
A rearrangement now yields the lemma. 
\end{proof}

Similar to Lemma~\ref{l.convexL2}, we often use Lemma~\ref{l.converseL2} in the case that $\xi= u(\cdot,U,q)$ is the minimizer for $\mu(U,q)$, in which case the conclusion gives, for every $w\in H^1(U)$,
\begin{equation} \label{e.converseL2}
 \fint_{U} \big( L(Dw(x),x) -q\cdot Dw(x) \big) \, dx \leq \mu(U,q) + 2\Lambda \fint_{U} \left| Dw(x) - Du(x,U) \right|^2 \, dx.
\end{equation}
We also obtain a version of Lemma~\ref{l.converseL2} for planar boundary conditions, which states that, for every $p\in\Rd$ and $w,\xi \in H^1(U)$ such that $2\xi - w - \ell_p\in H^1_0(U)$, we have
\begin{multline} \label{e.converseL2nu}
 \fint_{U} L(Dw(x),x) \, dx \\ \leq 2 \fint_{U} L(D\xi(x),x)\, dx - \nu(U,p,L) + 2\Lambda \fint_{U} \left| Dw(x) - D\xi(x) \right|^2 \, dx.
\end{multline}

\subsection{Further properties of $\mu$, $\nu$ and $\overline L$}

For our reference, we record here some properties of $\mu$ and $\nu$ and their minimizers, particularly regarding their dependence on $p$ and $q$. 

\smallskip

An immediate consequence of~\eqref{e.convexL2nu} and~\eqref{e.converseL2nu} is that $p \mapsto \nu(x,U,p)$ is uniformly convex in $p$. Precisely, we claim that  
\begin{equation} \label{e.nuconvexp}
\frac14 |p_1-p_2|^2 \leq \frac12 \nu(U,p_1) + \frac12 \nu(U,p_2) - \nu(U,\tfrac12p_1+\tfrac12p_2) \leq \frac\Lambda4 |p_1-p_2|^2. 
\end{equation}
To get the first inequality of~\eqref{e.nuconvexp}, apply~\eqref{e.convexL2nu} with $w = v(\cdot,U,p_1)$ and $\xi = v(\cdot,U,p_2)$; to get the second inequality, apply~\eqref{e.converseL2nu} with $p=p_1$, $w = v(\cdot,U,p_2)$ and $\xi = v(\cdot,U,\frac12p_1+\frac12p_2)$. A further consequence of~\eqref{e.nubounds} and~\eqref{e.nuconvexp} is the continuity of $\nu$ in $p$: for every $p_1,p_2\in\Rd$, we have
\begin{equation} \label{e.nucontp}
\left| \nu(U,p_1) - \nu(U,p_2) \right| \leq C \left( K_0 + |p_1|+|p_2| \right) \left|p_1-p_2\right| \quad \mbox{$\P$--a.s.}
\end{equation}
Applying~\eqref{e.convexL2nu} to $w:=v(\cdot,U,p_1)$ and $\xi:= v(\cdot,U,p_2)$ and using~\eqref{e.nuconvexp}, we get
\begin{equation} \label{e.minnucloseL2D}
\fint_U \left| Dv(x,U,p_1) - Dv(x,U,p_2) \right|^2\, dx \leq \Lambda \left|p_1-p_2\right|^2.
\end{equation}
Specializing to  $U=Q_n$ and applying the Poincar\'e inequality, we get
\begin{equation} \label{e.minnucloseL2}
3^{-2n} \fint_{Q_n} \left( v(x,Q_n,p_1) - v(x,Q_n,p_2) \right)^2\, dx \leq C  \left|p_1-p_2\right|^2.
\end{equation}

The effective Lagrangian $\overline L$ defined in~\eqref{e.Lbar} satisfies the same growth and uniform convexity conditions as $L$:  for every $p,p_1,p_2\in \Rd$, 
\begin{equation} \label{e.Lbargrowth}
|p|^2 - K_0(1+|p|) \leq \overline L(p)  \leq \Lambda |p|^2 + K_0(1+|p|),
\end{equation}
\begin{equation} \label{e.Lbarconvex}
\frac14|p_1-p_2|^2 \leq   \frac12 \overline L(p_1) + \frac 12 \overline L(p_2) - \overline L\left( \frac12 p_1+\frac12p_2 \right) \leq \frac\Lambda4|p_1-p_2|^2
\end{equation}
and
\begin{equation} \label{e.LbarLip}
\left| \overline L(p_1) - \overline L(p_2) \right| \leq C \left( K_0 + |p_1| + |p_2| \right) |p_1-p_2|.
\end{equation}
These are  immediate consequences of~\eqref{e.nubounds},~\eqref{e.nuconvexp} and~\eqref{e.nucontp}. Observe that~\eqref{e.Lbarconvex} implies that $\overline L$ is differentiable at every $p\in\Rd$ and $D\overline L$ is Lipschitz continuous. In fact, for every $p,p_1,p_2\in \Rd$, we have
\begin{equation} \label{e.DLbarLip}
\left| D\overline L(p) \right| \leq C(K_0+|p|) \quad \mbox{and} \quad \left| D \overline L(p_1) - D\overline L(p_2) \right| \leq 2\Lambda |p_1-p_2|.
\end{equation}

Finally, we record the continuity of the maps $q\mapsto \mu(U,q)$ and $q\mapsto u(\cdot,U,q)$. Using $u(\cdot,U,q_1)$ as a minimizer candidate for $\mu(U,q_2)$ and the estimate~\eqref{e.vgradbnd}, we discover that
\begin{equation*} \label{}
\mu(U,q_2) \leq \mu(U,q_1) + C|q_1-q_2| \left(K_0 + |q_1| \right) \quad \mbox{$\P$--a.s.}
\end{equation*}
We deduce that, for every $q_1,q_2\in\Rd$ and bounded open $U\subseteq \Rd$,
\begin{equation} \label{e.mucontq}
\left| \mu(U,q_1) - \mu(U,q_2) \right| \leq C \left( K_0 + |q_1| + |q_2| \right) | q_1-q_2 | \quad \mbox{$\P$--a.s.}
\end{equation}
Lemma~\ref{l.convexL2} and a similar computation yield that
\begin{equation} \label{e.minmucloseL2D}
\fint_{U} \left| Du(x,U,q_1)  -  Du(x,U,q_2) \right|^2\, dx \leq C \left( K_0 + |q_1| + |q_2| \right) | q_1-q_2 | \quad \mbox{$\P$--a.s.}
\end{equation}
Specializing to the case $U=Q_n$ and applying the Poincar\'e inequality, we get  
\begin{multline} \label{e.minmucloseL2}
3^{-2n}  \fint_{Q_n} \left( u(x,Q_n,q_1)  -  u(x,Q_n,q_2) \right)^2\, dx  \\
\leq C \left( K_0 + |q_1| + |q_2| \right) | q_1-q_2 | \quad \mbox{$\P$--a.s.}
\end{multline}

\section{Convergence of the energy and flatness of minimizers}
\label{sec.qcc}

In this section we prove our first quantitative result: a sub-optimal algebraic estimate for the rate of convergence in the limits~\eqref{e.monolim} and~\eqref{e.monolimnu} as well as for the flatness of the respective minimizers. It is the main step toward the results stated in the introduction. 

\begin{theorem}
\label{t.mulimit}
Fix $q \in \Rd$. There exist $\alpha(d,\Lambda)>0$,  $c(d,\Lambda)>0$, $C(d,\Lambda)\geq 1$ and a unique $\overline P(q)\in\Rd$ such that
\begin{equation} \label{e.mulimform}
\overline \mu(q) + \overline P \cdot q = \overline L(\overline P) 
\end{equation}
and, for every $s\in (0,d)$, $R\geq 1$, $n\in\N_*$ and $t \geq 1$,
\begin{multline} \label{e.mulimit}
\P \bigg[ \sup_{y\in B_R} \Big( \left| \mu(y+Q_n,q) - \overline \mu(q) \right| + \left| \nu(y+Q_{n},\overline P) - \overline L(\overline P) \right| \Big) \\
 \geq C(K_0+|q|)^2 3^{-n\alpha(d-s)}t  \bigg]  \leq CR^d\exp\left( -c3^{sn} t\right)
\end{multline}
and 
\begin{multline} \label{e.planes}
\P \bigg[ \sup_{y\in B_R} 3^{-2n} \fint_{y+Q_n} \left( u(x,y+Q_n,q) - \overline P \cdot ( x-y) \right)^2 + \left( v(x,y+Q_n,\overline P) - \overline P \cdot x \right)^2 \,dx \\ \geq C(K_0+|q|)^23^{-n\alpha(d-s)} t \bigg]   \leq CR^d\exp\left( -c3^{sn} t\right).
\end{multline}
\end{theorem}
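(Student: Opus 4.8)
The identity \eqref{e.mulimform} has a soft half and a substantive half. Taking expectations in \eqref{e.munuord}, specializing to $Q_n$ and letting $n\to\infty$ via \eqref{e.monolim} and \eqref{e.monolimnu} gives $\overline\mu(q)\le\overline L(p)-q\cdot p$ for all $p\in\Rd$; since $\overline L$ is uniformly convex by \eqref{e.Lbarconvex}, the map $p\mapsto\overline L(p)-q\cdot p$ has a unique minimizer $\overline P=\overline P(q)$, characterized by $D\overline L(\overline P)=q$, and \eqref{e.mulimform} amounts to the assertion that this weak duality is saturated, $\overline\mu(q)=\overline L(\overline P)-q\cdot\overline P$. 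This equality and all of the quantitative content of the theorem come out of one multiscale analysis, which I would carry out at the level of the random variables themselves, using the independence (P1) at each passage of scale, and which simultaneously (a) forces the superadditive energy $\mu(Q_n,q)$ and the subadditive energy $\nu(Q_n,\overline P)$ together at an algebraic rate and (b) forces their minimizers to be $L^2$-flat around the plane $\ell_{\overline P}$ after rescaling lengths by $3^{-n}$. The two halves reinforce each other, and the weak duality \eqref{e.munuord} is what couples $\mu$ to $\nu$ throughout.

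\textbf{The core iteration.} Abbreviate $E_n=\E[\mu(Q_n,q)]$ and $F_n=\E[\nu(Q_n,\overline P)]$; by \eqref{e.musuperadd}--\eqref{e.monolim} and \eqref{e.nusubadd}--\eqref{e.monolimnu} these are monotone, so $\overline\mu(q)-E_n\ge0$, $F_n-\overline L(\overline P)\ge0$, and the telescoping increments are summable. The mechanism converting energies into flatness is uniform convexity: \lref{convexL2} in the form \eqref{e.convexUV}, together with its $\nu$-analogue \eqref{e.convexL2nu}, shows that a single-scale energy defect at scale $n$ controls, in $L^2$, the deviation of the minimizer on $Q_{n+1}$ restricted to a subcube $Q_n(x)$ from the minimizer on $Q_n(x)$ itself — this is exactly the computation that produced \eqref{e.trimsubject}. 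Iterating this comparison down through scales and using the Poincar\'e inequality, one patches the subcube minimizers and their spatial means (which by \eqref{e.P} assemble the mean gradient) into first a piecewise-affine and then a genuinely affine approximation of $u(\cdot,Q_n,q)$, with total $L^2$ error bounded by a sum of single-scale defects. The point where the real work lies is to close this into a \emph{self-improving} recursion: flatness of the subcube minimizers forces $\mu(Q_{n+1},q)$ to nearly equal the average of the subcube values of $\mu(Q_n,\cdot)$ and, via a boundary modification in the trimmed layer using \eqref{e.Ecubes}--\eqref{e.Etrimmedmono}, \eqref{e.trimsanu} and \eqref{e.converseL2nu}, forces $\nu(Q_{n+1},\overline P)$ down to nearly $\mu(Q_{n+1},q)+q\cdot\overline P$; hence the defects shrink by a definite factor relative to $\overline\mu(q)-E_n$ and $F_n-\overline L(\overline P)$, while conversely small defects feed back into improved flatness at the next scale. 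This yields a contraction with factor $\theta<1$, up to lower-order trimmed-layer errors of size $\sim(K_0+|q|)^23^{-n}$, which integrates to an algebraic bound $C(K_0+|q|)^23^{-n\alpha}$ for the energy gaps and the flatness, with $\alpha(d,\Lambda)>0$. Along the way the mean gradients $\E[P(Q_n,q)]$ form a Cauchy sequence, with limit $\overline P$; matching limits in $q\cdot\overline P+\mu(Q_n,q)\le\nu(Q_n,\overline P)$ and in the reverse (flatness) inequality gives $F_n-E_n-q\cdot\overline P\to0$, i.e. \eqref{e.mulimform}, and uniqueness of $\overline P(q)$ from the uniform convexity of $\overline L$.

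\textbf{From expectations to high probability.} To prove \eqref{e.mulimit}, fix $y$ in a finite net of $B_R$ (any translate $y+Q_n$ being comparable, by superadditivity, to a union of subcubes centred at net points). For the lower bound on $\mu(y+Q_n,q)$: apply \eqref{e.musuperadd} to the trimmed subcubes of an intermediate scale $n'$, which are separated by a unit distance and hence $\F$-independent by (P1) and identically distributed by (P2); then $\mu(y+Q_n,q)$ is at least the empirical average of $3^{d(n-n')}$ i.i.d.\ copies of $\mu(Q_{n'}^\circ,q)$ minus a trimming error, and since these copies are bounded by $2(K_0+|q|)^2$ by \eqref{e.mubounds}, a Hoeffding-type bound together with the deterministic rate from the core iteration gives $\mu(y+Q_n,q)\ge\overline\mu(q)-(\text{error})$ off an exponentially small event. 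For the upper bound, superadditivity points the wrong way, so I use \eqref{e.munuord}: $\mu(y+Q_n,q)\le\nu(y+Q_n,\overline P)-q\cdot\overline P$, and now $\nu$ is subadditive \eqref{e.nusubadd}, so the same i.i.d.\ averaging (this time an upper bound, via \eqref{e.trimsanu}), concentration, deterministic rate, and the identity \eqref{e.mulimform} give $\mu(y+Q_n,q)\le\overline\mu(q)+(\text{error})$. The same two devices with the roles of $\mu$ and $\nu$ swapped control $\nu(y+Q_n,\overline P)$ both above (subadditivity) and below (\eqref{e.munuord} plus the lower bound on $\mu$ just obtained). Choosing the split scale $n'$ and balancing the bias $3^{-n'\alpha}$ against the number $3^{d(n-n')}$ of independent subcells — the trade-off that the parameter $s$ tunes — yields an error of size $C(K_0+|q|)^23^{-n\alpha(d-s)}t$ with failure probability $\le C\exp(-c3^{sn}t)$, and a union bound over the $O(R^d)$ net points produces the factor $R^d$. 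Finally, \eqref{e.planes} follows by feeding \eqref{e.mulimit} back into the uniform-convexity inequalities: on the event where $\mu(y+Q_n,q)$ and $\nu(y+Q_n,\overline P)$ are both within the stated error of their limits, \eqref{e.convexUV}, \eqref{e.convexL2nu}, the Poincar\'e inequality, and the affine-patching comparison from the core iteration — now run with these high-probability energy bounds in place of expectations — bound $3^{-2n}\fint_{y+Q_n}(u(x,y+Q_n,q)-\overline P\cdot(x-y))^2\,dx$ and $3^{-2n}\fint_{y+Q_n}(v(x,y+Q_n,\overline P)-\overline P\cdot x)^2\,dx$ by the same algebraic quantity.

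\textbf{Main obstacle.} The crux is the self-improving loop inside the core iteration: showing that closeness of the subadditive and superadditive energies is, up to constants and lower-order terms, \emph{equivalent} to flatness of their minimizers, so that uniform convexity delivers a genuine contraction factor $\theta<1$ rather than the mere ``averaging $\Rightarrow$ convergence'' softness of the qualitative theory — while at the same time keeping the trimmed-boundary-layer errors under control uniformly across all scales. Once that algebraic deterministic rate is in hand, the concentration step and the weak-duality bootstrap that assemble \eqref{e.mulimit} and \eqref{e.planes} are comparatively routine.
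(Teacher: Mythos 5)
The overall architecture you describe — a multiscale comparison of the dual energies $\mu$ and $\nu$ driving a deterministic algebraic rate, then an independence-based concentration step to upgrade to exponential moments, and finally feeding the energy estimates back into uniform convexity to get flatness — is the right skeleton, and your treatment of weak duality (via~\eqref{e.munuord}), of the identity~\eqref{e.mulimform} as saturation of duality with $\overline P$ characterized by $D\overline L(\overline P)=q$, and of the union bound and $s$-tuned scale-splitting in the last step, all match the paper. But the ``core iteration'' you sketch has a genuine hole at its centre.

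You claim that uniform convexity (via~\eqref{e.convexUV} and the computation producing~\eqref{e.trimsubject}) lets you iterate down through scales and assemble the subcube minimizers into ``first a piecewise-affine and then a genuinely affine approximation'' of $u(\cdot,Q_n,q)$, with error bounded by the telescoping sum of single-scale energy defects. This cannot be correct as stated: convexity tells you that $u(\cdot,Q_{n+1},q)$ restricted to a subcube $Q_n(x)$ is $L^2$-close to the corresponding subcube minimizer $u(\cdot,Q_n(x),q)$ when the energy increment is small, but it gives you no control whatsoever on the \emph{slope} $P(Q_n(x),q)=\fint Du(\cdot,Q_n(x),q)$. That slope is a genuinely random vector, and nothing in the convexity or monotonicity structure forces it to be close to any deterministic $\overline P$. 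If you try to run your recursion without pinning down that slope, the ``piecewise-affine approximation'' drifts and the contraction never closes. The entire quantitative theory hinges on the observation — missing from your proposal — that the \emph{variance} of $P(Q_n^\circ,q)$ is controlled by the single-scale energy increment $\E[\mu(Q_{n+1}^\circ)]-\E[\mu(Q_n^\circ)]$; and the only mechanism for this is the finite range of dependence (P1), exploited in the paper's Lemma~\ref{l.curl} via an explicit divergence-free test field $B$ with $B\equiv e$ on $Q_n^\circ$ and compact support in $Q_{n+1}^\circ$, which makes $\fint_{Q_n^\circ}e\cdot Du(\cdot,Q_n^\circ)$ independent of the analogous quantity over $U\setminus Q_n^\circ$. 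You invoke independence only at the concentration stage; you need it already in the deterministic-rate stage, and it is used through a quite different device (circulation-freeness of $B$) than the i.i.d.\ averaging you deploy later.

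A secondary but non-trivial omission: the ``patching'' step cannot be done by simply multiplying subcube minimizers by a partition of unity and invoking Poincar\'e. The gradient field so assembled, $\mathbf{f}$ in the paper's notation, is \emph{not} a gradient, and to produce an admissible competitor for $\nu$ one must project it onto gradients. The paper's Lemma~\ref{l.patching} does this via a Helmholtz--Hodge decomposition $\mathbf{f}=\overline{\mathbf{f}}+Dw-\div\mathbf{S}$, and the dominant technical effort of that lemma is the estimate on $\div\mathbf{S}$ (Step~5), where independence and stationarity enter again through a mesoscopic decomposition. You have collapsed this into ``patches ... into a genuinely affine approximation,'' which hides precisely the step where the construction can fail if the slopes of adjacent subcubes are not already close — and they are close only because of the variance bound above. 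Finally, the ``contraction with factor $\theta<1$'' is asserted but not obtained: the paper does not get a per-scale contraction directly, but rather a pigeonhole argument over $M$ consecutive scales (Step~1 of the proof of Theorem~\ref{t.mulimit}) that finds one scale with a small increment, applies the patching lemma there, and thereby contracts the cumulative defect $\overline\mu-\mu_n$ every $M$ scales. These three missing mechanisms — the solenoidal-test variance bound, the Helmholtz correction, and the pigeonhole replacing a true contraction — are exactly where the claim ``this is new compared to the qualitative theory'' lives, and a proof that omits them is not a proof.
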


We prove Theorem~\ref{t.mulimit} in Section~\ref{ss.mulimproof} using the flatness theory we construct in Section~\ref{ss.flat} plus an elementary concentration argument. The final subsection contains extensions of Theorem~\ref{t.mulimit} and a demonstration of the fact that $-\overline \mu$ and $\overline L$ are convex dual functions.

\subsection{Reduction to the case $q=0$}
\label{ss.dropq}

It suffices to prove Theorem~\ref{t.mulimit} in the case that $q=0$. To see this, suppose that the statement of the theorem holds in this special case and fix $q\in\Rd$. Consider the probability measure $\P_q$ on $(\Omega,\F)$ which is the pushforward of~$\P$ under the map $L\mapsto L_q$, where the latter is defined by
\begin{equation} \label{e.Lq}
L_q(p,x) := L(p,x) -q\cdot p.
\end{equation}
(Recall that if $\pi:\l \to \l$ is an $\F$-measurable map, then the \emph{pushforward of $\P$ under $\pi$} is the probability measure denoted by $\pi_{\#} \P$ and defined for $E \in\F$ by $\pi_{\#}\P\left[E\right]:= \P\left[ \pi^{-1}(E) \right]$). Then it is easy to check that $\P_q$ satisfies the assumptions (P1), (P2) and (P3) after we replace the constant $K_0$ in (P3) by $K_0+|q|$. Applying the special case of Theorem~\ref{t.mulimit} with~$\P_q$ in place of~$\P$ and rewriting the statement in terms of $\P$ itself, we obtain the general statement of the theorem.

\smallskip

Therefore, in Sections~\ref{ss.flat} and~\ref{ss.mulimproof} we assume $q=0$ and drop $q$ from our notation by writing, for example, $\mu(U)$, $u(\cdot,U)$ and $P(U)$ in place of $\mu(U,0)$, $u(\cdot,U,0)$ and $P(U,0)$. The variable $q$ is reintroduced in Section~\ref{s.idenL} once the proof of Theorem~\ref{t.mulimit} is complete.

\subsection{The flatness of minimizers}  
\label{ss.flat}
The primary task in the proof of Theorem~\ref{t.mulimit} is to quantify the limit~\eqref{e.monolim}. The main step, which is the focus of this subsection, is to show that, for $n\gg1$, the minimizer $u(\cdot,Q_n)$ is close to a plane. This allows us to compare $\mu$ to $\nu(\cdot,p)$ for an appropriate choice of $p\in\Rd$.

\smallskip

In the first step, we use the finite range of dependence assumption to show that, unless the expectation of~$\mu$ increases significantly when passing to a larger scale, the variance of the average slope vector $P$ must be small (recall that $P$ is defined in~\eqref{e.P}). Since the argument is based on independence, we work with the trimmed cubes. 

\begin{lemma}
\label{l.curl}
There exists $C(d,\Lambda)\geq 1$ such that, for every $n\in\N$,
\begin{equation} \label{e.varP}
\var\left[ P(Q_n^\circ) \right] \leq C \left( \E \left[ \mu(Q_{n+1}^\circ) \right] - \E \left[ \mu(Q_n^\circ) \right] +CK_0^23^{-n} \right).
\end{equation}
\end{lemma}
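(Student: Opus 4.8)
The plan is to exploit the finite range of dependence to write $P(Q_{n+1}^\circ)$ as (roughly) an average of independent copies of $P(Q_n^\circ)$, so that its variance is controlled by the variance of $P(Q_n^\circ)$ divided by the number of subcubes, and then to show that the ``defect'' — how far $P(Q_{n+1}^\circ)$ is from such an average — is controlled by the energy increment $\E[\mu(Q_{n+1}^\circ)]-\E[\mu(Q_n^\circ)]$. Concretely, $Q_{n+1}^\circ$ contains (up to a set of measure $\leq C3^{nd-n}$, handled via \eqref{e.mubounds} and \eqref{e.trim}) the disjoint union of $3^d$ trimmed subcubes $Q_n^\circ(x_i)$, which are pairwise separated by unit distance; hence by (P1) the random variables $P(Q_n^\circ(x_i))$ are \emph{independent}, and by (P2) each is distributed like $P(Q_n^\circ)$. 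Set $\bar P := 3^{-d}\sum_i P(Q_n^\circ(x_i))$, the average slope of the \emph{glued} minimizer built from the restrictions of $u(\cdot,Q_n^\circ(x_i))$. Then $\var[\bar P] = 3^{-d}\var[P(Q_n^\circ)]$ by independence, so it suffices to bound $\E[|P(Q_{n+1}^\circ)-\bar P|^2]$ by the energy increment (plus the $CK_0^23^{-n}$ error), after which $\var[P(Q_{n+1}^\circ)] \leq 2\var[\bar P] + 2\E[|P(Q_{n+1}^\circ)-\bar P|^2]$ and one iterates/absorbs — actually one just needs the single-scale bound as stated, comparing $Q_n^\circ$ to $Q_{n+1}^\circ$ with the factor $C$ absorbing the $3^{-d}$ and the $2$'s; the monotonicity inequality \eqref{e.Etrimmedmono} keeps the increment nonnegative up to $CK_0^23^{-n}$.

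The key estimate is that $P(Q_{n+1}^\circ)-\bar P$ is an average of the gradient differences $Du(\cdot,Q_{n+1}^\circ) - Du(\cdot,Q_n^\circ(x_i))$ over the subcubes, so by Jensen/Cauchy--Schwarz
\[
\left| P(Q_{n+1}^\circ) - \bar P \right|^2 \leq \fint_{Q_{n+1}^\circ} \left| Du(x,Q_{n+1}^\circ) - \sum_i \mathbf{1}_{Q_n^\circ(x_i)}(x)\, Du(x,Q_n^\circ(x_i)) \right|^2 dx + (\text{trimming error}).
\]
The glued gradient field is exactly $Du(\cdot,V)$ for $V$ the interior of the closure of $\bigcup_i Q_n^\circ(x_i)$, so the right-hand side is handled by \eqref{e.convexUV} applied with $U=V$ and this larger $V$ replaced by (a suitable enlargement accommodating) $Q_{n+1}^\circ$ — i.e. by $2(\mu(Q_{n+1}^\circ) - \mu(V))$ plus lower-order terms, exactly as in the derivation of \eqref{e.trimsubject}. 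Taking expectations, $\E[\mu(V)] = 3^{-d}\sum_i\E[\mu(Q_n^\circ(x_i))] + O(K_0^23^{-n}) = \E[\mu(Q_n^\circ)] + O(K_0^23^{-n})$ by superadditivity \eqref{e.musuperadd}, stationarity, \eqref{e.mubounds} and \eqref{e.trim}, which collapses the right-hand side to $C(\E[\mu(Q_{n+1}^\circ)] - \E[\mu(Q_n^\circ)] + CK_0^23^{-n})$.

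The main obstacle is the bookkeeping around the trimming: $Q_{n+1}^\circ$ is not literally the disjoint union of $3^d$ cubes $Q_n^\circ(x)$ — there is a leftover open set of measure $O(3^{nd-n})$ near the interior faces, and one must be careful that (i) the minimizer $u(\cdot,Q_{n+1}^\circ)$ and the glued field $Du(\cdot,V)$ agree on the ``good'' part in the $L^2$-difference sense, which requires invoking \eqref{e.convexUV} with the right choice of $U\subseteq V$ (here $U$ is the disconnected union of the $3^d$ trimmed subcubes plus the leftover region, and $V$ is its ``filled-in'' version, which we then relate to $Q_{n+1}^\circ$), and (ii) every such leftover contributes only $O(K_0^23^{-n})$, using \eqref{e.mubounds}, \eqref{e.vgradbnd} and \eqref{e.trim}. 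This is precisely the same style of estimate already carried out for \eqref{e.trimsa}, \eqref{e.trimsubject} and \eqref{e.infamous}, so once those are in hand the argument is routine; the conceptual content is entirely in the independence step, which is short.
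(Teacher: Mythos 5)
The approach has a genuine gap. You set $\bar P := 3^{-d}\sum_i P(Q_n^\circ(x_i))$, use independence to compute $\var[\bar P] = 3^{-d}\var[P(Q_n^\circ)]$, bound $\E\bigl[|P(Q_{n+1}^\circ)-\bar P|^2\bigr]$ by the energy increment, and then write $\var[P(Q_{n+1}^\circ)] \leq 2\var[\bar P] + 2\E\bigl[|P(Q_{n+1}^\circ)-\bar P|^2\bigr]$. Each of these steps is fine, but what you obtain is
\begin{equation*}
\var\left[P(Q_{n+1}^\circ)\right] \leq 2\cdot 3^{-d}\var\left[P(Q_n^\circ)\right] + C\bigl(\E[\mu(Q_{n+1}^\circ)]-\E[\mu(Q_n^\circ)]+CK_0^23^{-n}\bigr),
\end{equation*}
which is a recursion bounding the variance at the \emph{larger} scale in terms of the variance at the \emph{smaller} scale. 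The lemma asserts the reverse: that $\var[P(Q_n^\circ)]$ is bounded by the energy increment, with no variance on the right-hand side. Your inequality cannot be inverted, and iterating it upward from scale $0$ only produces a geometrically weighted sum of \emph{all} lower-scale increments, not the single increment $\E[\mu(Q_{n+1}^\circ)]-\E[\mu(Q_n^\circ)]$ the lemma requires. There is also no obvious way to convert your $\bar P$ comparison into a bound on $\var[P(Q_n^\circ)]$ itself: $\E[|P_i-P_j|^2]$ for distinct subcubes is not controlled by the energy increment (it reflects the spatial oscillation of $Du(\cdot,Q_{n+1}^\circ)$, which the increment does not see), and the covariance $\cov[P(Q_{n+1}^\circ),\bar P]$ is not accessible.

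What the paper does is qualitatively different and is the heart of the independence argument, which you have identified as central but implemented incorrectly. It constructs a \emph{divergence-free} test field $B$, compactly supported in $Q_{n+1}^\circ$, with $B\equiv e$ on $Q_n^\circ$. Then $\fint_{Q_{n+1}^\circ} B\cdot Du(\cdot,Q_{n+1}^\circ)\,dx = 0$ \emph{exactly}, so the glued quantity $\fint_U B\cdot Du(\cdot,U)\,dx$ is close to zero in $L^2(\Omega,\P)$ up to the energy increment (via \eqref{e.convexUV}, \eqref{e.mubounds}, and the trimming error). This quantity splits as $\fint_{Q_n^\circ}e\cdot Du(\cdot,Q_n^\circ) + \fint_{U\setminus Q_n^\circ}B\cdot Du(\cdot,U\setminus Q_n^\circ)$, a sum of two \emph{independent} terms the first of which is $e\cdot P(Q_n^\circ)$. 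Since the variance of a sum of independent random variables is the sum of the variances, $\var[e\cdot P(Q_n^\circ)]$ is bounded directly by $\var[\text{sum}]\leq\E[\text{sum}^2]$, which is controlled by the energy increment. There is no recursion and no factor of $3^{-d}$ appears on a variance term; the zero of the divergence-free integral is what breaks the circularity. Without the solenoidal field $B$, your decomposition has nothing that is provably small in $L^2(\Omega,\P)$ outright — only quantities small relative to their own mean or relative to $P(Q_{n+1}^\circ)$ — and that is insufficient.
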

\begin{proof}
Fix $n\in\N$ and a unit vector $e\in \partial B_1$. Select a smooth vector field $$B:Q_{n+1}^\circ \to \Rd$$ with
\begin{equation*} \label{}
\div B = 0 \quad \mbox{in} \ Q_{n+1}^\circ
\end{equation*}
such that $B$ has compact support in $Q_{n+1}^\circ$ and satisfies $B \equiv e$ in $Q_n^\circ$ and the estimate $|B| \leq C$ in $Q_{n+1}^\circ$. To see that such a solenoidal vector field exists, consider without loss of generality the case $n=0$ the take $h\in H^1((Q_0+B_{1/2})\setminus (Q_0+B_{1/4}))$ to be the solution of the Neumann problem
\begin{equation*}
\left\{ 
\begin{aligned}
& -\Delta h = 0 &  \mbox{in} & \  (Q_0+B_{1/2})\setminus (Q_0+B_{1/4}),\\
& \partial_\nu h = e\cdot \nu & \mbox{on} & \ \partial (Q_0+B_{1/2}),\\
& \partial_\nu h = 0 & \mbox{on} & \ \partial (Q_0+B_{1/4}),
\end{aligned}
\right.
\end{equation*}
where (unlike in the rest of the paper) in the previous line $\nu$ denotes the outer unit normal vector. Then we define a vector field $\tilde B:\Rd \to \Rd$ by
\begin{equation*}
\tilde B(x) := \left\{ 
\begin{aligned}
& e\cdot x &  \mbox{if} & \  x\in Q_0+B_{1/4},\\
& e\cdot x - Dh(x) &  \mbox{if} & \  x\in (Q_0+B_{1/2})\setminus (Q_0+B_{1/4}),\\
& 0 & \mbox{if} & \ x\in \Rd \setminus (Q_0+B_{1/2}).
\end{aligned}
\right.
\end{equation*}
It is clear from the construction that $\tilde B\in L^\infty(\Rd ;\Rd)$ is solenoidal in $\Rd$ and has support in $Q_0+B_{1/2}$. We may then obtain $B$ as above by mollifying $\tilde B$. 

\smallskip

Observe that
\begin{equation} \label{e.divfree}
\fint_{Q_{n+1}^\circ} B(x) \cdot Du\!\left(x,Q^\circ_{n+1} \right)\, dx = 0.
\end{equation}

Let $U$ be the union of the trimmed subcubes of $Q_{n+1}^\circ$ of the form $Q_n^\circ(x) \subseteq Q_{n+1}^\circ$ and set $V:= Q_{n+1}^\circ \setminus \overline U$. Since $U$ and $V$ are disjoint, we have
\begin{equation*} \label{}
u(\cdot,U\cup V) \vert_U = u(\cdot,U) \quad \mbox{and} \quad u(\cdot,U\cup V) \vert_V = u(\cdot,V).
\end{equation*}
Similarly, for each $x\in Q_{n+1}^\circ$, we have
\begin{equation*} \label{}
u(\cdot,U\cup V) \vert_{Q^\circ_n(x)} = u(\cdot,Q^\circ_n(x)).
\end{equation*}
By previous two lines and stationarity, we have $\E \left[ \mu(U) \right] = \E \left[ \mu(Q^\circ_n) \right]$. Thus we may rewrite~\eqref{e.divfree} as
\begin{multline*} \label{}
\int_{U} B(x) \cdot Du(x,U)\,dx  \\
= \int_{U\cup V} B(x) \cdot \left( Du(x,U\cup V) - Du(x,Q_{n+1}^\circ) \right) \, dx - \int_{V} B(x) \cdot Du(x,V)\, dx.
\end{multline*}
Applying~\eqref{e.convexUV} and~\eqref{e.vgradbnd}, using $|B| \leq C$, $|V| \leq C3^{-n}|Q_{n+1}|$ and~\eqref{e.mubounds}, we obtain
\begin{equation} \label{e.circnote}
\left( \fint_{U} B(x) \cdot Du(x,U)\, dx \right)^2 \leq C \left( \mu(Q_{n+1}^\circ) -\mu(U\cup V)  + CK_0^23^{-n}\right).
\end{equation}
Using again that $|V| \leq C3^{-n}|Q_{n+1}|$ with~\eqref{e.mubounds}, we see from~\eqref{e.musuperadd} that 
\begin{equation*} \label{}
\E \left[ \mu(U\cup V) \right] \geq \frac{|U|}{|U\cup V|}\E \left[ \mu(U) \right] - CK_0^23^{-n} \geq  \E \left[ \mu(Q_n^\circ) \right] - CK_0^23^{-n}.
\end{equation*}
Taking the expectation of~\eqref{e.circnote} and using the previous line, we get
\begin{equation} \label{e.circnote2}
\E \left[ \left( \fint_{U} B(x) \cdot Du(x,U)\, dx \right)^2 \right] \leq C \left(\E \left[ \mu(Q_{n+1}^\circ)\right]  - \E \left[ \mu(Q_n^{\circ})\right]  + CK_0^23^{-n}\right).
\end{equation}
Finally, we note that $\dist(Q_n^\circ,U\setminus Q_n^\circ) \geq 1$ by construction and therefore, using~(P2), we see that the random variables 
\begin{multline*} \label{}
\fint_{Q_{n}^\circ} B(x) \cdot Du(x,Q_n^\circ)\, dx \qquad \mbox{and} \qquad \fint_{U \setminus Q_{n}^\circ} B(x) \cdot Du(x,U\setminus Q_n^\circ)\, dx \quad \\ \mbox{are $\P$--independent.}  
\end{multline*}
Therefore, using that $B(x) = e$ in $Q_n^\circ$ as well as 
\begin{equation*} \label{}
u(\cdot,Q_n^\circ) = u(\cdot,U)\vert_{Q_n^\circ} \quad \mbox{and} \quad u(\cdot,U\setminus Q_n^\circ) = u(\cdot,U)\vert_{U\setminus Q_n^\circ},
\end{equation*}
we obtain from independence and~\eqref{e.circnote2} that 
\begin{align*} \label{}
\var\left[ e\cdot P(Q_n^\circ) \right] & =  \var \left[ \fint_{Q_n^\circ} e \cdot Du(x,Q_n^\circ)\, dx \right] \\
& \leq \var \left[ \fint_{Q_n^\circ} e \cdot Du(x,Q_n^\circ)\, dx \right] + \var \left[ \fint_{U\setminus Q_n^\circ} B(x) \cdot Du(x,U\setminus Q_n^\circ)\, dx \right] \\
& = \var \left[  \fint_{U} B(x) \cdot Du(x,U)\, dx \right] \\
& \leq \E \left[ \left( \fint_{U} B(x) \cdot Du(x,U)\, dx \right)^2 \right] \\
& \leq C\left(\E \left[ \mu(Q_{n+1}^\circ)\right]  -\E\left[ \mu(Q_n^{\circ})\right]  + CK_0^23^{-n}\right).
\end{align*}
Summing over~$e$ in the standard basis for~$\Rd$ yields the lemma. 
\end{proof}

Motivated by the previous lemma, we define, for each $n\in \N$, the deterministic slope~$\overline P_n\in \Rd$ at the~$n$th scale by
\begin{equation*} \label{}
\overline P_n := \E \left[ P(Q_n^\circ) \right].
\end{equation*}
We note for future reference that~\eqref{e.vgradbnd} implies
\begin{equation} \label{e.Pnbnds}
\left| \overline P_n \right| \leq 3K_0.
\end{equation}
We may formulate the previous lemma in terms of the untrimmed cubes. Indeed, by~\eqref{e.Emono},~\eqref{e.trimsubject} and~\eqref{e.varP}, we have
\begin{equation} \label{e.flatuntrim}
\E \left[ \left| P(Q_n) - \overline P_n  \right|^2 \right] \leq C \left( \E \left[ \mu(Q_{n+1}) \right] - \E \left[ \mu(Q_n^\circ) \right] +CK_0^23^{-n} \right).
\end{equation}

We next present the key assertion regarding the flatness of minimizers. It states that, if $\E[ \mu(Q_{n+1})] - \E[ \mu(Q_n^\circ)]$ is small, then we can construct a candidate for the minimizer of $\mu$ on an arbitrarily large scale which is very close to a plane of slope~$\overline P_n$ and has expected energy not much more than~$\E \left[ \mu(Q_n) \right]$. By modifying the latter a little so that it has affine boundary conditions, we get a minimizer candidate for $\nu(Q_{2n}^\circ,\overline P_n)$, which gives an upper bound for $\E[\nu(Q_{2n}^\circ,\overline P_n)]$ in terms of $\E[\mu(Q_n)]$. The argument uses Lemma~\ref{l.curl} and a patching construction.

\begin{lemma}
\label{l.patching}
There exists a constant $C(d,\Lambda)>0$ and a universal $\alpha >0$ such that, for every $n\in \N$,
\begin{equation} \label{e.patching}
\E \left[ \nu(Q_{2n}^\circ,\overline P_n) \right]  \leq \E \left[ \mu(Q_n) \right] + C \left( \E \left[ \mu(Q_{n+1}) \right] - \E \left[ \mu(Q_n^\circ) \right] + CK_0^23^{-\alpha n} \right). 
\end{equation}
\end{lemma}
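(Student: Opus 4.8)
The plan is to build an explicit minimizer candidate for $\nu(Q_{2n}^\circ,\overline P_n)$ by patching together copies of the $\mu$-minimizers from the smaller scale $Q_n$, then estimate its energy. The patching is the heart of the argument, so let me describe it carefully. Partition $Q_{2n}$ (up to null sets) into $3^{dn}$ subcubes of the form $Q_n(z)$, $z \in 3^n\Zd$. On each such subcube, the natural local object is the minimizer $u(\cdot,Q_n(z))$, which by stationarity has the same law as $u(\cdot,Q_n)$ and whose gradient averages to $P(Q_n(z))$. By Lemma~\ref{l.curl} (or rather its untrimmed consequence~\eqref{e.flatuntrim}) the slopes $P(Q_n(z))$ are, in $L^2(\P)$, close to the deterministic slope $\overline P_n$ provided $\E[\mu(Q_{n+1})]-\E[\mu(Q_n^\circ)]$ is small. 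So first I would replace each local minimizer by $u(\cdot,Q_n(z)) + \overline P_n\cdot(x-z)$ up to a correction; the issue is that these pieces do not match across the interfaces between adjacent subcubes, so they do not assemble into an $H^1$ function. This is exactly why the overlapping family $\tilde Q_{n+1}$ was introduced in Section~\ref{ss.cubes}: I would use a partition of unity subordinate to a covering by slightly enlarged cubes to interpolate between the neighboring local pieces, paying an energy cost on the overlap regions that is controlled by the $L^2$ distance between the gradients of minimizers on overlapping cubes (via~\eqref{e.convexUV}) plus the mismatch of their slopes from $\overline P_n$.

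After assembling the interpolated function $\tilde u$ on (most of) $Q_{2n}$, I would estimate its energy using the ``converse'' convexity bound, Lemma~\ref{l.converseL2} in the form~\eqref{e.converseL2} (and its planar analogue~\eqref{e.converseL2nu}): on each subcube the energy of $\tilde u$ exceeds $\mu(Q_n(z))$ by at most $2\Lambda$ times the squared gradient perturbation, and the gradient perturbation is supported in the interpolation/overlap zone. Taking expectations and using stationarity, $\E\big[\fint_{Q_n(z)}L(D\tilde u,x)\,dx\big] \le \E[\mu(Q_n)] + C\,\E[\text{perturbation}^2]$, and the perturbation term is bounded by a constant times $\E[\mu(Q_{n+1})]-\E[\mu(Q_n^\circ)] + CK_0^2 3^{-n}$ using~\eqref{e.flatuntrim},~\eqref{e.convexUV},~\eqref{e.Pnbnds} and~\eqref{e.vgradbnd}. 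Then one more modification near $\partial Q_{2n}^\circ$: I must adjust $\tilde u$ in a boundary layer of thickness $\sim 3^{n}$ (a proportion $\sim 3^{-n}$ of the volume, cf.~\eqref{e.trim}) so that it agrees exactly with the plane $\ell_{\overline P_n}$ on $\partial Q_{2n}^\circ$, making it an admissible competitor for $\nu(Q_{2n}^\circ,\overline P_n)$; the energy cost of this boundary correction is $O(K_0^2 3^{-n})$ by the quadratic growth~(P3) together with the flatness estimate~\eqref{e.flatuntrim} telling us $\tilde u$ is already close to that plane. Absorbing $3^{-n}$ into $3^{-\alpha n}$ for some universal $\alpha\in(0,1]$ gives~\eqref{e.patching}.

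The main obstacle is the patching step itself: making the interpolation between neighboring local minimizers $H^1$-admissible while keeping the energy overhead bounded by the ``mass flux'' $\E[\mu(Q_{n+1})]-\E[\mu(Q_n^\circ)]$ rather than by something larger. Two points need care here. First, the cut-off gradients $|D\chi|$ scale like $3^{-n}$, so a naive estimate of $|D\chi|^2 |u_1-u_2|^2$ on an overlap would need an $L^2$ bound on $u_1-u_2$ (a Poincaré-type control of the function difference, not just the gradient difference), which one gets from~\eqref{e.convexUV} and a Poincaré inequality on the overlap region after matching mean values. Second, one must organize the interpolation so that the overlap regions have total volume a bounded fraction of $|Q_{2n}|$ and so that each point lies in $O(1)$ overlaps — this is guaranteed by the bounded-overlap property of the $\tilde Q_{n+1}$ family noted in Section~\ref{ss.cubes} (each intersects $5^d-1$ others). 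I expect that once the competitor is correctly built, every remaining estimate is a routine application of the lemmas already assembled in Section~\ref{s.energies}.
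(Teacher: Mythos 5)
Your proposal captures the high-level strategy (patch small-scale $\mu$-minimizers on overlapping $Q_{n+1}$-cubes to produce a $\nu$-competitor) but omits the single hardest step in the paper's proof, and the step you call ``routine'' is where the essential difficulty lies.

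The gap is the global drift of the constants. You correctly note that the naive product-rule term $\sum_z D\chi_z\,u_z$ must be controlled, and you propose: match mean values of adjacent pieces $u_z$ on overlaps, then use Poincar\'e and~\eqref{e.convexUV}. That is a \emph{local} estimate, and it only tells you that \emph{adjacent} pieces differ by $O(3^n)$ times a small number on their common overlap. But to produce an $H^1$ function on all of $Q_{2n}$ you must pick the constant shifts $c_z$ \emph{globally and consistently}, and what matters for the energy is how far the patched function has drifted from the affine profile $\overline P_n\cdot x$ over the full side length $3^{2n}$, i.e.\ over $\sim 3^n$ overlapping cubes. The local slopes $P(z+Q_{n+1})$ fluctuate about $\overline P_n$, so the cumulative drift is a sum of $\sim 3^n$ increments; showing it is $o(3^{2n})$ requires precisely the kind of concentration/cancellation that Poincar\'e and~\eqref{e.convexUV} do not give. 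One cannot absorb this into the boundary layer either, since the drift is spread throughout the interior.

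In the paper this is handled by the Helmholtz--Hodge decomposition: the locally patched gradient field $\mathbf f(x)=\zeta\sum_z\psi_z\big(Du(\cdot,z+Q_{n+1})-\overline P_n\big)$ is written as $\overline{\mathbf f}+Dw-\div\mathbf S$, the competitor is built from $w$ rather than from constant-adjusted function pieces, and the heart of the argument is the estimate $\E\big[3^{-4n}\fint_{Q_{2n}}|w|^2\big]\leq CK_0^2 3^{-n/2}$ (Step~4 of the paper's proof). That estimate is \emph{not} routine: it introduces a second mesoscale $3^k$ with $n<k<2n$, splits the duality pairing $\int|w|^2=\int D\phi\cdot\tilde{\mathbf f}$ accordingly, and uses the independence hypothesis~(P1) to get a CLT-type gain $\E\big[(\int_{z+Q_k}\tilde{\mathbf f})^2\big]\leq CK_0^2 3^{d(k+n)}$ (rather than the deterministic $3^{2dk}$). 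Optimizing $k\approx 3n/2$ yields the sublinear bound. A similar careful estimate is needed for the solenoidal part $\div\mathbf S$ (Step~5). Neither of these follows from the lemmas assembled in Section~\ref{s.energies}; in fact this is the place in the whole paper where finite range of dependence enters the patching construction (and where your ``routine application of the lemmas already assembled'' claim breaks down). Until you have an argument controlling the accumulated drift of your constants $c_z$ across $Q_{2n}$, the proposal is incomplete.
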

\begin{proof}
Throughout, we fix $n\in\N$, let $C$ denote a positive constant depending only on $(d,\Lambda)$ which may vary in each occurrence, and denote
\begin{equation*} \label{}
\tau_n := \left( \E \left[ \mu(Q_{n+1}) \right] - \E \left[ \mu(Q_n^\circ) \right] + CK_0^23^{-n/60} \right).
\end{equation*}
To prove the lemma, it suffices to construct $v\in H^1_0(Q^\circ_{2n})$ satisfying 
\begin{equation} 
\label{e.exhibition}
\E \left[ \fint_{Q^\circ_{2n}} L \left( \overline P_n+ D  v(x),x\right) \, dx \right] \\
\leq \E \left[ \mu( Q_n) \right]   + C\tau_n.
\end{equation}
In the first step we give the construction of $v$. The rest of the argument is concerned with the proof of~\eqref{e.exhibition}.

\smallskip

\emph{Step 1.} We construct the candidate minimizer $v\in H^1_0(Q^\circ_{2n})$. We build~$v$ by patching the minimizers for $\mu$ on the  family $\left\{ z+ Q_{n+1}\,:\, z\in 3^n\Zd\right\}$ of overlapping triadic cubes.  We consider a partition of unity subordinate to this family by denoting, for each $z\in 3^n\Zd$,
\begin{equation*}
\psi(y):= \int_{ Q_{n}} \psi_0(y-x)\,dx,
\end{equation*}
where $\psi_0\in C^\infty_c(\Rd)$ is a smooth function satisfying
\begin{equation*}
0\leq \psi_0\leq C3^{-dn}, \quad \int_{\Rd} \psi_0(x)\,dx=1, \quad |D  \psi_0|\leq C3^{-(d+1)n}, \quad \supp \psi_0 \subseteq  Q_n.
\end{equation*}
It follows then that $\psi$ is smooth and supported in $ Q_{n+1}$, satisfies $0\leq \psi\leq 1$, and the translates of $\psi$ form a partition of unity:
\begin{equation}\label{e.partunity}
\sum_{z\in3^n\Zd} \psi(x-z) = 1.
\end{equation}
Moreover, we have 
\begin{equation} \label{e.psiderv}
\sup_{x\in\Rd} \left| D \psi(x) \right| \leq C3^{-n}.
\end{equation}
We next two smooth cutoff functions $\xi,\zeta \in C^\infty_c(Q_{2n})$ satisfying
\begin{multline} \label{e.xicutoff}
0\leq \xi\leq 1, \quad \xi\equiv 1 \ \mbox{on} \ \left\{ x\in Q_{2n}\,:\, \dist(x,\partial  Q_{2n}) > 3^{2n/(1+\delta)} \right\}, \\
\xi \equiv 0 \ \mbox{on} \ \left\{ x \in z+Q_n \,:\, z\in 3^n\Zd,\ z+Q_{n+1} \not\subseteq Q_{2n}^\circ  \right\}, \quad |D \xi|\leq C3^{-2n/(1+\delta)},
\end{multline}
where $\delta \in (0,\beta]$ will be selected below in Step~6, and
\begin{multline} \label{e.zetacutoff}
 0\leq \zeta \leq 1, \quad \zeta \equiv 1 \ \mbox{on} \  \left\{x \in  z+Q_{n} \,:\, z\in 3^n\Zd, \ z+Q_{n+3} \subseteq Q_{2n}  \right\}, \\
 \zeta \equiv 0 \ \mbox{on} \  \left\{ x \in z+Q_{n+1} \,:\, z\in 3^n\Zd, \ z+Q_{n+1} \not\subseteq Q_{2n} \right\}, \quad |D\zeta|\leq C3^{-n}.
\end{multline} 
\smallskip

To construct $v$, we first define a vector field $\mathbf{f} \in L^2 (\Rd;\Rd)$ by 
\begin{equation} \label{e.def.f}
\mathbf{f} (x):= \zeta(x) \sum_{z\in3^n\Zd} \psi(x-z) \left( D  u(x,z+Q_{n+1}) - \overline P_n  \right).
\end{equation}
Since~$\mathbf{f}$ is not necessarily the gradient of an $H^1$ function, due to the errors made by introducing the partition of unity and the cutoff function, we need to take its Helmholtz-Hodge projection. We may write 
\begin{equation} \label{e.helmholtz}
\mathbf{f} = \overline{\mathbf{f}}+ D w - \div\mathbf{S} \quad \mbox{in} \ Q_{2n},
\end{equation}
where 
\begin{equation*}
\overline{\mathbf{f}}:= \fint_{Q_{2n}} \mathbf{f}(x)\,dx,
\end{equation*}
$w\in H^1_{\mathrm{loc}}(\Rd)$ is defined as the unique solution of 
\begin{equation*} \label{}
\left\{ 
\begin{aligned}
& -\Delta w = -\div{\mathbf{f}} \ \  \mbox{in}  \ Q_{2n}, \\
& \fint_{Q_{2n}} w(x)\,dx =0, \\
& w \ \   \mbox{is \ $Q_{2n}$--periodic},
\end{aligned}
\right.
\end{equation*}
and $\mathbf{S}$ is a skew-symmetric matrix with entries $S_{ij} \in H^1_{\mathrm{loc}}(\Rd)$ uniquely determined (up to a constant) by 
\begin{equation*} \label{}
\left\{ 
\begin{aligned}
& -\Delta S_{ij} = \partial_j f_i - \partial_i f_j \quad \mbox{in} \ Q_{2n}, \\
& S_{ij}\ \   \mbox{is \ $Q_{2n}$--periodic}.
\end{aligned}
\right.
\end{equation*}
Here $f_i$ is the $i$th entry of~$\mathbf{f}$ and $\div\mathbf{S}$ is the vector field with entries $\sum_{j=1}^d \partial_j S_{ij}$. Indeed,
one may check via a straightforward computation that each component of the vector field $\mathbf{f} - D  w + \div\mathbf{S}$ is harmonic and therefore constant by periodicity. This constant must be $\overline {\mathbf{f}}$ since $D  w$ and $\div\mathbf{S}$ have zero mean in~$Q_{2n}$. This confirms~\eqref{e.helmholtz}. 

\smallskip

We define $v\in H^1_0(Q^\circ_{2n})$ by cutting off $w$:
\begin{equation*} \label{}
v(x):= \xi(x) w(x), \quad x\in Q_{2n}.
\end{equation*}
The cutoff function~$\xi$ is supported in $Q^\circ_{2n}$ and thus~$v\in H^1_0(Q^\circ_{2n})$.

\smallskip

The rest of the proof concerns the derivation of~\eqref{e.exhibition}. This is accomplished by showing that $D  v$ is expected to be close to $\mathbf{f}$ in $L^2(Q^\circ_{2n})$ due to the fact that~$w$,~$\overline{\mathbf{f}}$ and~$\div\mathbf{S}$ each have a small expected~$L^2$ norm.

\smallskip

\emph{Step 2.} We show that, for every $z\in3^n\Zd \cap Q_{2n}$,
\begin{equation}\label{e.gradlocalize}
\E \bigg[ \fint_{z+Q_n} \Big(  \left| \mathbf{f}(x) - \zeta(x) \left( D  u(x,z+Q_{n+1}) - \overline P_n \right) \right|^2    \bigg] \leq  C \tau_n.
\end{equation}
By Lemma~\ref{l.convexL2} we have, for every $z\in3^n\Zd$,
\begin{multline*}
\sum_{y\in  \{ -3^n,0,3^n\}^d} \fint_{z+y+Q_n} \left| Du(x,z+Q_{n+1}) - Du(x,z+y+ Q_{n}) \right|^2 \, dx \\
\begin{aligned}
& \leq C \sum_{y\in  \{ -3^n,0,3^n\}^d} \left( \fint_{z+y+Q_n} L\left( Du(x,z+Q_{n+1}),x\right)\, dx - \mu(y+z+Q_n) \right) \\
& = C \left( \mu(z+Q_{n+1}) - \sum_{y\in  \{ -3^n,0,3^n\}^d} \mu(z+y+Q_n) \right).
\end{aligned}
\end{multline*}
Taking expectations and using the triangle inequality, we get, for every $z\in 3^n\Zd$ and $y\in \{ -3^n,0,3^n\}^d$,
\begin{equation}\label{e.gridtrap}
\E \left[ \fint_{z+Q_n} \left| Du(x,z+y+Q_{n+1}) - D u(x,z+Q_{n+1}) \right|^2 \, dx \right] 
\leq C \tau_n.
\end{equation}
Now observe that~\eqref{e.gradlocalize} follows from the previous inequality and the fact that, for every $z\in 3^n\Zd$ and $x\in z+Q_{n}$,
\begin{multline*}
\mathbf{f}(x) - \zeta(x) \left( D  u(x,z+Q_{n+1}) - \overline P_n \right) \\ 
= \zeta(x) \sum_{y\in \{ -3^n,0,3^n\}^d} \psi(x-y) \left( D  u(x,z+y+Q_{n+1}) - D  u(x,z+Q_{n+1}) \right).
\end{multline*}

\smallskip

\emph{Step 3.} We show that 
\begin{equation}\label{e.whipfbar}
 \E \left[  \left| \overline{\mathbf{f}}\right|^2  \right]   \leq C\tau_n.
\end{equation}
In view of~\eqref{e.zetacutoff}, it is convenient to denote
\begin{equation} \label{e.Zn}
\mathcal Z_n:= \left\{ z\in 3^n\Zd\,:\, z+Q_{n+3} \subseteq Q_{2n} \right\}. 
\end{equation}
Observe that $( 3^n\Zd \cap Q_{2n} ) \setminus \mathcal Z_n$ has $C 3^{n(d-1)}$ elements. 
By~\eqref{e.flatuntrim},~\eqref{e.gradlocalize} and~\eqref{e.vgradbnd}, we have
\begin{align*}
\E \left[ \left|\overline{\mathbf{f}} \right|^2 \right] & \leq \E \left[ 3^{-nd} \sum_{z\in 3^n\Zd \cap Q_{2n}} \left| \fint_{z+Q_n} \mathbf{f}(x)\,dx  \right|^2 \right]  \\
& \leq 2 \E \left[ 3^{-nd} \sum_{z\in 3^n\Zd \cap Q_{2n}} \left| \fint_{z+Q_n} \zeta(x)\left( D  u(x,z+Q_{n+1})\,dx-\overline P_n \right)  \right|^2 \right]  + C\tau_n  \\
& \leq 2\E \left[ 3^{-nd} \sum_{z\in \mathcal{Z}_n} \left| \fint_{z+Q_n}  D  u(x,z+Q_{n+1})\,dx-\overline P_n  \right|^2 \right] + CK_0^23^{-n} + C\tau_n \\
& \leq C \tau_n.
\end{align*}

\smallskip

\emph{Step 4.} We show that 
\begin{equation} \label{e.boundonw2}
\E \left[ 3^{-4n} \fint_{Q_{2n}} \left| w(x) \right|^2\,dx \right] \leq C K_0^23^{-n/2}.
\end{equation}
Let $\phi \in H^2_{\mathrm{loc}}(\Rd)$ denote the unique solution of
\begin{equation*} \label{}
\left\{ 
\begin{aligned}
& -\Delta \phi = w \ \  \mbox{in}  \ \Rd, \\
& \fint_{Q_{2n}} \phi(x)\,dx =0, \\
& \phi \ \  \mbox{is $Q_{2n}$--periodic}.
\end{aligned}
\right.
\end{equation*}
Integrating by parts, we have
\begin{multline}\label{e.softH2}
\int_{Q_{2n}} \left| D^2\phi(x) \right|^2\,dx = \int_{Q_{2n}} \left| w(x) \right|^2\,dx = \int_{Q_{2n}} D  \phi(x) \cdot \mathbf f(x)  \, dx \\ 
= \int_{Q_{2n}} D  \phi(x) \cdot \left( \mathbf f(x) - \E \left[ \int_{Q_n} \mathbf{f}(x)\,dx \right]  \right)  \, dx .
\end{multline}
We need a second mesoscale, given by an integer $k \in (n,2n)$ to be selected below. In what follows, we denote $ (D  \phi)_z:= \fint_{z+Q_k} D  \phi(x)\,dx$ and $\sum_z = \sum_{z \in3^k\Zd\cap Q_{2n}}$ as well as $\tilde{ \mathbf{f}}:=\mathbf{f}- \E \left[ \int_{Q_n} \mathbf{f}(x)\,dx \right] $. To estimate $\| w \|_{L^2(Q_{2n})}$, we use the following splitting:
\begin{align*}
\int_{Q_{2n}} \left| w(x) \right|^2\,dx & = \int_{Q_{2n}}  D  \phi(x) \cdot \tilde{\mathbf {f}}(x) \, dx \\
& = \sum_z \left( \int_{z+Q_k} \left( D  \phi(x)    - (D  \phi)_z  \right) \cdot  \tilde{\mathbf {f}}(x) \,dx +  (D  \phi)_z \cdot \int_{z+Q_k} \tilde{\mathbf {f}}(x)\,dx \right).
\end{align*}
To estimate the first term in the sum on the right side of the previous inequality, we use the Poincar\'e inequality, the discrete H\"older inequality,~\eqref{e.softH2} and~\eqref{e.vgradbnd}, and then Young's inequality:
\begin{align*}
\lefteqn{\sum_z  \int_{z+Q_k} \left( D  \phi(x)    - (D  \phi)_z  \right) \cdot  \tilde{\mathbf {f}}(x) \,dx  } \qquad & \\
& \leq  \sum_z 3^k \left( \int_{z+Q_n} \left| D^2  \phi(x) \right|^2\,dx \right)^{\frac12} \left( \int_{z+Q_n}  \left|  \tilde{\mathbf {f}}(x) \right|^2\,dx \right)^{\frac12} \\
& \leq 3^k \left(\sum_z \int_{z+Q_n} \left| D^2  \phi(x) \right|^2\,dx \right)^{\frac12} \left(\sum_z \int_{z+Q_n}  \left|  \tilde{\mathbf {f}}(x) \right|^2\,dx\right)^{\frac12} \\
& = 3^k \left( \int_{Q_{2n}} \left| D^2  \phi(x) \right|^2 \,dx \right)^{\frac12} \left( \int_{Q_{2n}} \left|  \tilde{\mathbf {f}}(x)\right|^2 \,dx \right)^{\frac12} \\
& \leq 3^k \left( \int_{Q_{2n}} \left| w(x) \right|^2\,dx \right)^{\frac12} \left( CK_0^2\left| Q_{2n} \right|\right)^{\frac12} \\
& \leq \frac14 \int_{Q_{2n}} \left| w(x) \right|^2\,dx  + CK_0^2\left| Q_{2n} \right| 3^{2k}.
\end{align*}
We next estimate the expectation of the second term in the sum, using two different forms of H\"older's inequality:
\begin{align}\label{e.holderapps}
\lefteqn{\E \left[ \sum_z (D  \phi)_z \cdot \int_{z+Q_k} \tilde{\mathbf{f}}(x)\,dx \right] } \qquad & \\
& = \sum_z \E \left[ (D  \phi)_z \cdot \int_{z+Q_k} \tilde{\mathbf{f}}(x)\,dx \right] \notag \\
& \leq \sum_z \E \left[ \left| \left( D  \phi \right)_z \right|^2  \right]^{\frac12} \E \left[ \left( \int_{z+Q_k} \tilde{\mathbf{f}}(x)\,dx \right)^2 \right]^{\frac12}\notag  \\
& \leq \left( \sum_z \E \left[ \left| \left( D  \phi \right)_z \right|^2  \right]  \right)^{\frac12} \left(\sum_z \E \left[ \left( \int_{z+Q_k} \tilde{\mathbf{f}}(x)\,dx \right)^2 \right]  \right)^{\frac12}.  \notag
\end{align}
For the first factor on the right side of the previous inequality, we have, by the Poincar\'e inequality and~\eqref{e.softH2}, 
\begin{align}\label{e.ingred1}
 \sum_z \E \left[ \left| \left( D  \phi \right)_z \right|^2  \right]  = \E \left[ \sum_z \left| \left( D  \phi \right)_z \right|^2 \right] & \leq  C3^{-kd}\,\E \left[ \int_{Q_{2n}} \left| D  \phi(x) \right|^2\,dx \right]  \\
& \leq C3^{-kd+4n}\,\E \left[ \int_{Q_{2n}} \left| D^2 \phi(x) \right|^2\,dx \right] \notag\\
& =  C3^{-kd+4n}\,\E \left[ \int_{Q_{2n}} \left| w(x) \right|^2\,dx \right].\notag
\end{align}
In preparation to estimate the second factor, we use independence to get
\begin{align*}
\E \left[ \left( \int_{Q_k} \tilde{\mathbf{f}}(x)\,dx \right)^2 \right] & = \E\left[ \sum_{y,y'\in 3^n\Zd\cap Q_k}  \int_{y+Q_n} \tilde{\mathbf{f}}(x)\,dx \int_{y'+Q_n} \tilde{\mathbf{f}}(x)\,dx\right] \\
& \leq  C  \left( CK_0 \left| Q_n \right| \right)^2 \left| 3^n\Zd \cap Q_k \right| = CK_0^2 3^{d(k+n)}.
\end{align*}
By stationarity, the same estimate holds with $z+Q_k$ in place of $Q_k$ provided that the cube $z+Q_k$ does not touch $\partial Q_{2n}$. For the cubes which do touch the  boundary of the macroscopic cube (and thus intersect the support of~$\zeta$), we use the following cruder, deterministic bound given by~\eqref{e.vgradbnd}:
\begin{equation*}
\left( \int_{Q_k}  \tilde{\mathbf{f}}(x) \,dx \right)^2 \leq  C3^{dk} \int_{Q_k}  \left| \tilde{\mathbf{f}}(x) \right|^2 \,dx \leq C K_0^2 3^{2dk}.
\end{equation*}
Combining these, using that there are at most $C3^{(2n-k)(d-1)}$ cubes of the form $z+Q_k$ which touch the boundary of $Q_n$, we get 
\begin{equation}\label{e.ingred2}
\sum_z \E \left[ \left( \int_{z+Q_k} \tilde{\mathbf{f}}(x)\,dx \right)^2 \right] \leq C K_0^2 3^{2dn +dk} \left( 3^{-d(k-n)} + 3^{- (2n-k)} \right).
\end{equation}
We may now estimate the right side of~\eqref{e.holderapps} using applying~\eqref{e.ingred1},~\eqref{e.ingred2} and Young's inequality. The result is:
\begin{multline}
\E \left[ \sum_z (D  \phi)_z \cdot \int_{z+Q_k} \tilde{\mathbf{f}}(x)\,dx \right] \\ \leq \frac14 \E \left[ \int_{Q_{2n}} \left| w(x) \right|^2\,dx \right] + K_0^2 \left| Q_{2n}\right| 3^{4n}\left( C' 3^{-d (k-n)} + C3^{-(2n-k)} \right) 
\end{multline}
Combining the above inequalities now yields
\begin{equation*}
\E \left[ \int_{Q_{2n}} \left| w(x) \right|^2\,dx \right]  \leq K_0^2 3^{4n} \left| Q_{2n} \right| \left( C'3^{-d (k-n)} + C3^{-(2n-k)} \right).
\end{equation*}
Taking finally $k$ to be the nearest integer to $3n/2$, we obtain~\eqref{e.boundonw2}.

\smallskip

\emph{Step 5.} We estimate the expected size of $\left| \div\mathbf{S} \right|^2$. The claim is
\begin{equation}\label{e.whipsolenoidal}
\E \left[ \fint_{Q_{2n}} \left|\div\mathbf{S}(x) \right|^2\,dx \right] \leq C\tau_n.
\end{equation}
As in the previous step, we use the abbreviations $u_z:= u(\cdot,z+Q_{n+1})$, $\psi_z:= \psi(\cdot-z)$, $\sum_z:= \sum_{z\in 3^n\Zd \cap Q_{2n} }$ and $\int_z:= \int_{z+Q_{n+1}}$
Observe that, in the sense of distributions, for every $i,j\in\{1,\ldots,d\}$, we have
\begin{multline*}
\partial_j f_i - \partial_i f_j 
= \sum_{z} \zeta \left(\partial_j\psi_z ( \partial_iu_z- \overline P_{n,i})  - \partial_i\psi_z (\partial_ju_z -\overline P_{n,j})\right) \\
+ \sum_z \psi_z \left(\partial_j\zeta (\partial_iu_z -\overline P_{n,i}))  - \partial_i\zeta (\partial_ju_z -\overline P_{n,j})\right) \quad \mbox{in} \ Q_{2n}.
\end{multline*}
The right side belongs to $L^2(Q_{2n})$, thus  $\partial_j f_i - \partial_i f_j\in L^2(Q_{2n})$ and $S_{ij}\in H^2_{\mathrm{loc}} (\Rd)$. Using the fact that, for every $x\in\Rd$,
\begin{equation} \label{}
\label{e.sillyidentity}
\sum_z \zeta(x) D\psi_z(x) = 0,
\end{equation}
we may also express the previous identity slightly differently as
\begin{multline}
\label{e.curlfL2}
\left( \partial_j f_i - \partial_i f_j \right)(x)
 = \sum_{z} \zeta(x) \left[D  \psi_z(x), D  u_z(x) - \overline P_n - \mathbf{f}(x) \right]_{ij}  \\
+ \sum_z \psi_z \left[ D  \zeta, D  u_z  -\overline P_{n} \right]_{ij} \quad \mbox{in} \ Q_{2n},
\end{multline}
where we henceforth use the notation 
\begin{equation*}
\left[ \mathbf{v},\mathbf{w} \right]_{ij} := v_j w_i - v_iw_j
\end{equation*}
for indices $i,j\in\{ 1,\ldots,d\}$ and vectors $\mathbf{v},\mathbf{w}\in\Rd$ with entries $(v_i)$ and $(w_i)$, respectively. 
Next we define, for each~$i \in \{ 1,\ldots,d\}$,
\begin{equation*}
\sigma_i := -\left( \div\mathbf{S} \right)_i = -\sum_{j=1}^d \partial_jS_{ij}.
\end{equation*}
It is evident that $\sigma_i \in H^1_{\mathrm{per}}(Q_{2n})$ and $\sigma_i$ is a solution of the equation
\begin{equation*}
-\Delta \sigma_i = -\sum_{j=1}^d \partial_j \left( \partial_j f_i - \partial_i f_j \right) \quad \mbox{in} \ Q_{2n}.
\end{equation*}
Since $\sigma_i$ has zero mean in $Q_{2n}$, there exists~$\rho_i \in H^3_{\mathrm{loc}} (\Rd)$, which is unique up to an additive constant, satisfying 
\begin{equation*} \label{}
\left\{ 
\begin{aligned}
& -\Delta \rho_i = \sigma_i \quad \mbox{in} \ \Rd, \\
& \rho_i \ \   \mbox{is \ $Q_{2n}$--periodic}.
\end{aligned}
\right.
\end{equation*}
We have the identities 
\begin{equation} \label{e.H2equal}
\int_{Q_{2n}} \left| D^2  \rho_i(x) \right|^2\,dx =  \int_{Q_{2n}} \left| \sigma_i(x)\right|^2\,dx =  \int_{Q_{2n}} D  \rho_i(x) \cdot D  \sigma_i(x)\,dx.
\end{equation}
Integrating by parts and using the equation for $\sigma_i$, we obtain
\begin{equation*}
\int_{Q_{2n}} \left| \sigma_i(x)\right|^2\,dx = \int_{Q_{2n}} D  \rho_i(x) \cdot D  \sigma_i(x)\,dx = \sum_{j = 1}^d \int_{Q_{2n}} \partial_j \rho_i(x)\left( \partial_jf_i - \partial_i f_j \right)\,dx.
\end{equation*}
To further shorten the notation, in each of the following expressions we keep the sum over~$j$ implicit (note that $i$ is not summed over even though it is repeated) and set $\left( \partial_j \rho_i \right)_z:= \fint_{z+Q_{n+1}} \partial_j\rho_i(x) \,dx$. Continuing then the computation by substituting~\eqref{e.curlfL2}, we obtain
\begin{align*}
\lefteqn{ \int_{Q_{2n}} \partial_j \rho_i(x)\left( \partial_jf_i - \partial_i f_j \right)\,dx } \qquad & \\
& =  \sum_z \int_{z} \left( \partial_j \rho_i(x)- \left( \partial_j \rho_i \right)_z \right) \zeta(x) \left[D  \psi_z(x), D  u_z(x) - \overline P_n - \mathbf{f}(x) \right]_{ij} \,dx \\
& \quad +  \sum_z \left( \partial_j \rho_i \right)_z  \int_{z} \zeta(x)\left[D  \psi_z(x), D  u_z(x) - \overline P_n - \mathbf{f}(x) \right]_{ij}\,dx \\
& \quad +  \sum_z\int_{z} \partial_j \rho_i(x)  \psi_z(x) \left[D  \zeta(x), D  u_z(x) - \overline P_n  \right]_{ij} \,dx.
\end{align*}
We put the second sum on the right side into a more convenient form via \eqref{e.def.f}, \eqref{e.partunity}, integration by parts and~\eqref{e.sillyidentity}: 
\begin{align*}
\lefteqn{ \sum_z \left( \partial_j \rho_i \right)_z  \int_{z} \zeta(x)\left[D  \psi_z(x), D  u_z(x) - \overline P_n - \mathbf{f}(x) \right]_{ij}\,dx} \quad & \\
& = \sum_{y,z}  \left( \partial_j \rho_i \right)_z \int_z \zeta(x) \psi_y(x) \left[D  \psi_z(x), D  u_z(x) - \overline P_n - \zeta(x)( D  u_y(x) - \overline P_{n} ) \right]_{ij}\,dx \\
& = \sum_{y,z}  \left( \left( \partial_j \rho_i \right)_z - \left( \partial_j \rho_i \right)_y \right) \int_z -(\zeta(x))^2 \psi_y(x) \left[D  \psi_z(x), D  u_y(x) - \overline P_n  \right]_{ij}\,dx \\
& \qquad + \sum_{z}  \left( \partial_j \rho_i \right)_z \int_z - \psi_z(x) \left[D  \zeta(x), D  u_z(x) - \overline P_n  \right]_{ij}\,dx.
\end{align*}
Combining this with the previous identity, we get 
\begin{align}
\lefteqn{\int_{Q_{2n}} \left| \sigma_i(x)\right|^2\,dx } \quad & \label{e.splitintothree} \\
& =  \sum_z \int_{z} \left( \partial_j \rho_i(x)- \left( \partial_j \rho_i \right)_z \right) \zeta(x) \left[D  \psi_z(x), D  u_z(x) - \overline P_n - \mathbf{f}(x) \right]_{ij} \,dx \notag \\
& \quad + \sum_{y,z}  \left( \left( \partial_j \rho_i \right)_y - \left( \partial_j \rho_i \right)_z \right) \int_z (\zeta(x))^2 \psi_y(x) \left[D  \psi_z(x), D  u_y(x) - \overline P_n  \right]_{ij}\,dx  \notag \\
&  \quad + \sum_{z}  \int_z \left( \partial_j \rho_i (x)- \left( \partial_j \rho_i \right)_z \right)  \psi_z(x) \left[D  \zeta(x), D  u_z(x) - \overline P_n  \right]_{ij}\,dx. \notag 
\end{align}
We now proceed to estimate the three terms on the right side of~\eqref{e.splitintothree}. For the first sum, we use~\eqref{e.psiderv}, the H\"older, discrete H\"older and Poincar\'e inequalities,~\eqref{e.H2equal} and Young's inequality to get
\begin{align*}
\lefteqn{\sum_{z} \int_{z} \left( \partial_j \rho_i(x)    - (\partial_j \rho_i)_z  \right) \zeta(x) \left[ D \psi_z(x) ,  Du_z(x) - \overline P_n- \mathbf{f}(x)  \right]_{ij}  \,dx } \qquad & \\
& \leq C \left(  \sum_z  \int_z \left| D^2 \rho_i(x) \right|^2\,dx  \right)^{\frac12}  \left(  \sum_z  \int_z \left| Du_z(x) - \overline P_n- \mathbf{f}(x) \right|^2\,dx  \right)^{\frac12} \\
& \leq \frac14 \int_{Q_{2n}} \left| \sigma_i(x) \right|^2\,dx + C\sum_z \int_z \left| Du_z(x) - \overline P_n- \mathbf{f}(x) \right|^2\,dx.
\end{align*}
Taking expectations and using~\eqref{e.gradlocalize}, we obtain
\begin{multline} \label{e.firstsumnailedk}
\E \left[ \sum_z \int_{z} \left( \partial_j \rho_i(x)- \left( \partial_j \rho_i \right)_z \right) \zeta(x) \left[D  \psi_z(x), D  u_z(x) - \overline P_n - \mathbf{f}(x) \right]_{ij} \,dx \right] \\
 \leq \frac14 \E \left[ \int_{Q_{2n}} \left| \sigma_i(x) \right|^2\,dx \right] + C\left| Q_{2n} \right| \tau_n.
\end{multline}
For the second sum, we notice that each entry vanishes unless $y \in z+Q_{n+2}$, there are at most $C$ such entries $y$ in the sum for any given entry $z$, and for such $y$ and $z$, the Poincar\'e inequality gives
\begin{equation*} \label{}
 \left|  \left( \partial_j \rho_i\right)_y - \left( \partial_j \rho_i\right)_z \right|^2 \leq C3^{2n}  |Q_n|^{-1}  \int_{z+Q_{n+3}} \left|D^2 \rho_i(x) \right|^2\,dx.
\end{equation*}
Using this,~\eqref{e.psiderv},~\eqref{e.H2equal} and~\eqref{e.mubounds}, the H\"older and Young inequalities and the fact that $3^n\Zd\cap Q_{2n}$ has $C3^{nd}$ elements, we get 
\begin{align*}
\lefteqn{ \sum_{y,z} \left(   \left( \partial_j \rho_i\right)_y - \left( \partial_j \rho_i\right)_z \right) \int_{z} (\zeta(x))^2 \psi_y(x) \left[  D \psi_z(x) ,  Du_y(x) -\overline P_n  \right]_{ij}  \,dx  } \qquad & \\
& \leq C \left(  3^{2n}|Q_{n}|^{-1} \sum_{z} \int_{z+Q_{n+3}} \left| D^2 \phi_i(x) \right|^2\,dx \right)^{\frac12}\left(  \sum_{z} 3^{-2n} CK_0^2|Q_n| \right)^{\frac12} \\
& \leq \frac14 \int_{Q_{2n}} \left| \sigma_i(x) \right|^2\,dx + C K_0^23^{nd}.
\end{align*}
For the third sum on the right side of~\eqref{e.splitintothree}, we proceed in almost the same way as for the first two, except that rather than use~\eqref{e.psiderv} we use the estimate for~$D \zeta$ in~\eqref{e.zetacutoff} and the fact that $D \zeta$ vanishes except if $z\not\in \mathcal Z_n$ (recall that $\mathcal Z_n$ is defined in~\eqref{e.Zn}) and there are at most $C3^{n(d-1)}$ such elements in the sum. We obtain:
\begin{align*}
\lefteqn{ \sum_z \int_{z} \left( \partial_j\rho_i(x)- \left( \partial_j \rho_i\right)_z \right)\psi_z(x) \left[ D \zeta(x) ,  Du_z(x) - \overline P_n\right]_{ij} \,dx } \qquad & \\
& \leq C \left(  3^{2n} \sum_{z} \int_{z} \left| D^2 \phi_i(x) \right|^2\,dx \right)^{\frac12}\left(  \sum_{z\notin\mathcal Z_n} 3^{-2n} CK_0^2|Q_n| \right)^{\frac12} \\
& \leq \frac14 \int_{Q_{2n}} \left| \sigma_i (x) \right|^2\,dx + C K_0^2 3^{n(2d - 1)}.
\end{align*}
Combining the previous two inequalities with~\eqref{e.H2equal}, ~\eqref{e.splitintothree}  and~\eqref{e.firstsumnailedk} yields
\begin{equation*}
\E \left[  \int_{Q_{2n}} \left| \sigma_i (x) \right|^2\,dx \right] \leq CK_0^2 3^{n(2d-1)} + C\left| Q_{2n} \right| \tau_n.
\end{equation*}
Dividing by~$\left| Q_{2n} \right|$ gives~\eqref{e.whipsolenoidal}.

\smallskip

\emph{Step 6.} We show that the effect of the cutoff $\xi$ in the definitions of $v$ and $\mathbf{h}$ is expected to be small: precisely,
\begin{equation}\label{e.closenvnw}
\E \left[ \fint_{Q_{2n}} \left| D  v(x) - D  w(x) \right|^2\,dx \right] 
\leq  C\tau_n. 
\end{equation}
We use the identity
\begin{equation*}
D  v(x) - D  w(x) = w(x) D  \xi(x) + (\xi(x)-1) D  w(x)
\end{equation*}
and~\eqref{e.xicutoff} to obtain
\begin{multline} \label{e.nvnwsplit}
\fint_{Q_{2n}} \left| D  v(x) - D  w(x) \right|^2\,dx \\
 \leq C3^{-4n/(1+\delta)} \fint_{Q_{2n}} \left| w(x) \right|^2\,dx + C\fint_{Q_{2n}} \left| \xi(x) - 1 \right|^2 \left| D  w(x) \right|^2\,dx.
\end{multline}
The expectation of the first integral on the right side is controlled by~\eqref{e.boundonw2}:
\begin{equation*}
\E \left[3^{-4n/(1+\delta)} \fint_{Q_{2n}} \left| w(x) \right|^2\,dx \right] \leq CK_0^2 3^{-4n/(1+\delta) + 7n/2}  \leq C 3^{-n/60} \leq C\tau_n,
\end{equation*}
where we have defined
$$\delta := \frac1{14}.$$ For the expectation of the second integral on the right side of~\eqref{e.nvnwsplit}, we recall from~\eqref{e.xicutoff} that $\xi \equiv 1$ except in $$D:= \left\{ x\in Q_{2n} \,:\, \dist(x,\partial Q_{2n}) > C3^{2n/(1+\delta)} \right\}.$$ Therefore, using that $D$ intersects at most $C3^{n(d-2\delta/(1+\delta))}$ subcubes of the form $z+Q_{n+1}$, with $z\in 3^n\Zd$, and applying~\eqref{e.vgradbnd},~\eqref{e.whipfbar} and~\eqref{e.whipsolenoidal}, we obtain
\begin{align} \label{e.cutboundarysbcs}
\lefteqn{ \E \left[\fint_{Q_{2n}} \left| \xi(x) - 1 \right|^2 \left| D  w(x) \right|^2\,dx \right]  \leq \frac1{\left| Q_{2n} \right|} \E\left[ \int_{D} \left| D  w(x) \right|^2\,dx\right] } \qquad & \\
& \leq \frac{C}{\left| Q_{2n} \right|} \E \left[ \int_{D} \left| \mathbf{f}(x) \right|^2\,dx + \int_{Q_{2n}} \left| \mathbf{f}(x) - D  w(x) \right|^2\,dx  \right] \notag \\
& \leq \frac{C}{\left| Q_{2n} \right|} \left( CK_0^2 3^{n(d-2\delta/(1+\delta))}\left| Q_{n} \right| + C| Q_{2n} |\tau_n  \right)     \leq C  \tau_n. \notag
\end{align}
Combining the previous inequality with~\eqref{e.boundonw2} and~\eqref{e.nvnwsplit}, we obtain the desired estimate for the first term on the left of~\eqref{e.closenvnw}. 

\smallskip

\emph{Step 7.}
We estimate the expected difference in $L^2(z+Q_{n})$ between $Dv$ and $Du(\cdot,z+Q_{n+1})$ for each $z \in 3^n\Zd \cap Q^\circ_{2n} $. The claim is that 
\begin{equation} \label{e.patchcomp}
\E \left[ 3^{-dn} \sum_{z \in 3^n\Zd \cap Q^\circ_{2n} }  \fint_{z+Q_n}  \left| D  v(x) - D  u(x,z+Q_{n+1}) +\overline P_n \right|^2\, dx\right] \leq C\tau_n.
\end{equation}
Indeed, for each $z \in  3^n\Zd\cap Q^\circ_{2n}$ and $x\in z+Q_n$, we have 
\begin{multline*} \label{}
D  v(x) - D  u(x,z+Q_{n+1}) +\overline P_n \\
= \left( D  v(x) - D  w(x) \right) + \left( \mathbf{f}(x) -D  u(x,z+Q_{n+1}) + \overline P_n \right) - \left( \overline{\mathbf{f}} - \div\mathbf{S} \right).
\end{multline*}
The desired estimate is a consequence of the previous inequality,~\eqref{e.gradlocalize} (note that $\zeta \equiv 1$ on $z+Q_{n+1}$ for every $z\in 3^n\Zd \cap Q^\circ_{2n}$),~\eqref{e.whipfbar},~\eqref{e.whipsolenoidal} and~\eqref{e.closenvnw}.

\smallskip

\emph{Step 8.}
We complete the argument by deriving~\eqref{e.exhibition}. By~Lemma~\ref{l.converseL2}, we have, for each $z \in 3^n\Zd\cap Q^\circ_{2n}$,
\begin{multline*} \label{}
 \fint_{z+Q_n} L(\overline P_n+D  v(x),x) \, dx 
   \leq 2 \fint_{z+Q_n} L(D  u(x,z+Q_{n+1}),x) \, dx - \mu(z+Q_{n}) \\
+ C \fint_{z+Q_n}  \left| D  v(x) - D  u(x,z+Q_{n+1})+\overline P_n \right|^2  \, dx.
\end{multline*}
In view of~\eqref{e.xicutoff}, it is convenient to denote $\mathcal Z_n':=  \left\{ z\in 3^n\Zd \,:\, \ z+Q_{n+1} \not\subseteq  Q^\circ_{2n} \right\}$ and  $U:= \cup_{z\in\mathcal Z_n'} (z+Q_n)$. Note that $\xi$ vanishes on $Q^\circ_{2n} \setminus U$ and thus $v$  does as well. Observe also that 
\begin{equation*} \label{}
\left| Q^\circ_{2n} \setminus U \right| \leq C3^{-n} \left| Q^\circ_{2n} \right| \quad \mbox{and} \quad \left| \left| \mathcal Z_n' \right| \left| Q_n \right| - \left|Q^\circ_{2n}\right| \right| \leq C3^{-n} \left| Q^\circ_{2n} \right|.
\end{equation*}
Now take the expectation of the previous inequality and sum over $z\in \mathcal Z_n'$, using \eqref{e.mubounds}, \eqref{e.vgradbnd}, \eqref{e.gridtrap}, Lemma~\ref{l.converseL2},~\eqref{e.patchcomp}, stationarity and the above to obtain
\begin{align*}
\lefteqn{ \E\left[\int_{Q^\circ_{2n}} L(\overline P_n+D  v(x), x) \, dx \right] } \qquad & \\ 
& \leq \sum_{z \in\mathcal Z_n'} \E\left[\int_{z+Q_n} L(\overline P_n+D  v(x) , x) \, dx \right]  
+ \E \left[ \int_{Q^\circ_{2n} \setminus U} L(\overline P_n ,x) \, dx \right]  \\
& \leq \left| Q^\circ_{2n} \right|\left(  \E \left[ \mu(Q_n) \right]  +C\tau_n \right).
\end{align*}
Dividing by~$|Q^\circ_{2n}|$ yields~\eqref{e.exhibition} and completes the proof of the lemma.
\end{proof}

\subsection{The proof of~Theorem~\ref{t.mulimit}}
\label{ss.mulimproof}

We use Lemma~\ref{l.patching} and a concentration argument to prove Theorem~\ref{t.mulimit}.

\begin{proof}[{Proof of Theorem~\ref{t.mulimit}}]

By the reduction explained in Section~\ref{ss.dropq}, we assume~$q=0$ and drop the variable~$q$ from our notation, as we did in the previous subsection.

\smallskip

We first argue by iterating Lemma~\ref{l.patching} that $\E \left[ \mu(Q_n) \right] \to \overline\mu$  as $n\to \infty$  at a rate which is at most a power of the length scale,~$3^n$. We then use this result and a concentration argument to improve the stochastic convergence to~\eqref{e.mulimit}, and finally obtain~\eqref{e.planes} by from this and the flatness theory. Throughout, we allow $C(d,\Lambda)\geq 1$ and $\alpha(d,\Lambda)>0$ to vary in each occurrence. 

\smallskip

\emph{Step 1.} We iterate Lemma~\ref{l.patching} to find $C(d,\Lambda)\geq1$ and $\alpha(d,\Lambda)>0$ such that, for every $n\in\N_*$,
\begin{equation} \label{e.Emurate}
\left| \overline \mu - \E \left[ \mu(Q_n) \right] \right| \leq CK_0^2 3^{-n\alpha}.
\end{equation}
We first get an analogous estimate for the trimmed cubes and then use~\eqref{e.Ecubes} and~\eqref{e.infamous} to obtain the desired inequality for the untrimmed cubes. By~\eqref{e.Etrimmedmono}, if the constant $C(d,\Lambda)\geq 1$ is taken large enough and we define
\begin{equation*} \label{}
\mu_n:=  \E \left[ \mu(Q_n^\circ) \right] - CK_0^2 3^{-n}, \quad n \in\N,
\end{equation*}
then~$\mu_n$ is an increasing sequence in~$n$. Clearly $\mu_n$ is bounded from above by~\eqref{e.mubounds}. In view of~\eqref{e.Ecubes} and~\eqref{e.infamous}, we have
\begin{equation} \label{e.trimlim}
\lim_{n\to \infty} \mu_n = \overline \mu.
\end{equation}

Fix $M\in \N$ with $M\geq 2$ to be selected below. By the pigeonhole principle, the monotonicity of $\{ \mu_n\}_{n\in\N}$ and~\eqref{e.trimlim}, we deduce, for each $n\in\N$, the existence of $m\in\{ 0,\ldots,M-1\}$ such that 
\begin{equation} \label{e.pidgeon}
\mu_{n+m+2} - \mu_{n+m} \leq \frac{2}{M} \left( \overline \mu - \mu_n \right). 
\end{equation}
We apply Lemma~\ref{l.patching}  to obtain
\begin{align*} \label{}
\overline \mu - \mu_{n+m} & = \overline\mu - \E \left[\mu(Q_{n+m}^\circ) \right] + CK_0^2 3^{-n} \\
& \leq \overline \mu - \E \left[ \mu(Q_{n+m}) \right] + C\left( \E\left[ \mu(Q_{n+m+1}^\circ) \right] -\E\left[ \mu(Q_{n+m}^\circ) \right]  +CK_0^23^{-n\alpha}\right)  \\
& \leq C\left( \E \left[ \mu(Q_{n+m+1}) \right] - \E \left[ \mu(Q_{n+m}^\circ)\right] + CK_0^2 3^{-n\alpha} \right) \\
& \leq C \left( \E \left[ \mu(Q_{n+m+2}^\circ) \right] - \E \left[ \mu(Q_{n+m}^\circ)\right] + CK_0^2 3^{-n\alpha} \right) \\
& \leq C \left( \frac1M \left( \overline \mu - \mu_n \right) + K_0^23^{-n\alpha} \right).
\end{align*}
Here we used~\eqref{e.infamous} to obtain the second line, Lemma~\ref{l.patching} and~\eqref{e.Ecubes} to get the third line,~\eqref{e.Etrimmedmono} and~\eqref{e.infamous} to get the fourth line, and finally~\eqref{e.pidgeon} in the fifth line. 

\smallskip

By monotonicity and $M\geq m\geq 0$, we obtain 
\begin{equation*} \label{}
\overline \mu - \mu_{n+M} \leq C \left( \frac1M\left( \overline \mu - \mu_n\right) + K_0^23^{-n\alpha} \right).
\end{equation*}
Taking $M := C(d,\Lambda)$ large enough, we obtain
\begin{equation*} \label{}
\overline\mu - \mu_{n+M} \leq \frac13 \left( \overline \mu - \mu_n \right) + CK_0^23^{-n\alpha}.
\end{equation*}
Since $M\geq 2$, we also have, with the same constant $C(d,\Lambda)$ on both sides,
\begin{equation*} \label{}
\overline\mu - \mu_{n+M} + CK_0^23^{-(n+M)\alpha} \leq \frac13 \left( \overline \mu - \mu_n + CK_0^23^{-n\alpha} \right).
\end{equation*}
Therefore, the sequence $\beta_k := \overline \mu - \mu_{kM} + CK_0^23^{-kM\alpha}$ satisfies
\begin{equation*} \label{}
\beta_{k+1} \leq \frac13 \beta_k.
\end{equation*}
By induction, $\beta_k \leq 3^{-k} \beta_0$. Since $\beta_0 \leq CK_0^2$ by~\eqref{e.mubounds}, we obtain in particular that 
\begin{equation*} \label{}
\overline \mu - \mu_{kM} \leq \beta_k \leq CK_0^2 3^{-k}.
\end{equation*}
By monotonicity we get, for every $m\geq kM$,
\begin{equation*} \label{}
|\overline \mu - \mu_{m}| = \overline \mu - \mu_{m} \leq CK_0^2 3^{-k}.
\end{equation*}
The previous line yields, for each $n\in\N_*$, the estimate
\begin{equation} \label{e.Emuratetrim}
\left| \overline \mu - \E \left[ \mu(Q_n^\circ) \right] \right| \leq CK_0^2 3^{-n\alpha}.
\end{equation}
By monotonicity,~\eqref{e.Ecubes} and the previous line, we get
\begin{equation*} \label{}
\left| \overline \mu - \E \left[ \mu(Q_{n}) \right]  \right| = \overline \mu - \E \left[ \mu(Q_{n}) \right] \leq \overline \mu - \E \left[ \mu(Q_{n}^\circ) \right] + CK_0^2 3^{-n}  \leq C K_0^2 3^{-n\alpha}.
\end{equation*}
This is~\eqref{e.Emurate}.

\smallskip

\emph{Step 2.} We deduce the existence of $\overline P\in\Rd$ such that, for every $n\in\N$,
\begin{equation} \label{e.Pconv}
\left| \overline P - \overline P_n \right|^2 \leq CK_0^2 3^{-n\alpha}.
\end{equation}
It is immediate from~\eqref{e.convexUV} (taking $V=Q_{n+1}$ and $U$ to be the union of the $3^d$ $n$-scale subcubes of $Q_{n+1}$) and~\eqref{e.Emurate} that 
\begin{equation*}
\left| \E \left[ P(Q_n) \right] - \E \left[ P(Q_{n+1}) \right] \right|^2 \leq CK_0^23^{-n\alpha}. 
\end{equation*}
Then from~\eqref{e.flatuntrim} we deduce that
\begin{equation*}
\left| \overline P_n - \overline P_{n+1} \right|^2 \leq CK_0 3^{-n\alpha}.
\end{equation*}
Summing this over $\{ n, n+1,\ldots\}$ yields the existence of $\overline P\in\Rd$ satisfying~\eqref{e.Pconv}. 
For future reference we note that, by~\eqref{e.Pnbnds},
\begin{equation} \label{e.Pbarbnd}
\left| \overline P \right|^2 \leq CK_0^2.
\end{equation}

\smallskip

\emph{Step 3.} After possibly redefining $\alpha(d,\Lambda)$ to be smaller, we obtain
\begin{equation} \label{e.L1conv}
\E \left[ \nu(Q_{n}^\circ,\overline P) -  \mu(Q_n^\circ) \right]  \leq CK_0^2 3^{-n\alpha}.
\end{equation}
By~\eqref{e.Etrimmedmono},~\eqref{e.infamous},~\eqref{e.nucontp},~\eqref{e.Pnbnds}, Lemma~\ref{l.patching},~\eqref{e.Emurate},~\eqref{e.Pconv} and~\eqref{e.Pbarbnd}, we have
\begin{align*} \label{}
\lefteqn{\E \left[ \nu(Q_{2n}^\circ,\overline P) -  \mu(Q_{2n}^\circ) \right] } \qquad & \\
& \leq  \E \left[ \nu(Q_{2n}^\circ,\overline P) -  \nu(Q_{2n}^\circ,\overline P_n) \right] + \E \left[ \nu(Q_{2n}^\circ,\overline P_n) -  \mu(Q_{n}^\circ) \right] + CK_0^2 3^{-n}\\
& \leq C K_0 |\overline P- \overline P_n| + C\left( \E \left[ \mu(Q^\circ_{n+2})\right] - \E \left[ \mu(Q^\circ_n) \right] + K_0^23^{-n} \right) \\
& \leq CK_0^23^{-\alpha n/2}.
\end{align*}
This yields~\eqref{e.L1conv} after we replace $\alpha$ by $\alpha/4$.

Observe also that~\eqref{e.Emurate} and~\eqref{e.L1conv} imply that $\overline \mu = \overline L(\overline P)$ and
\begin{equation} \label{e.convexpnu}
\left| \E\left[ \nu(Q_n^\circ,\overline P)\right] - \overline \mu \right| \leq CK_0^2 3^{-n\alpha}.
\end{equation}

\smallskip

\emph{Step 4.} We use independence to improve the convergence of the expectations from the previous step to convergence in $L^1(\Omega,\P)$. The claim is that, after redefining $\alpha(d,\Lambda)>0$ to be smaller, we have
\begin{equation} \label{e.L1conv2}
\E \Big[ \left| \nu(Q_{n}^\circ,\overline P) - \overline  \mu \right| + \left| \mu(Q_{n}^\circ) - \overline  \mu \right| \Big] \leq CK_0^2 3^{-n\alpha}.
\end{equation}
Using the fact that $\mu(Q_k^\circ) \leq \nu(Q_k^\circ,\overline P)$, we have 
\begin{equation*} \label{}
\left| \nu(Q_n^\circ) - \overline \mu \right| \leq \nu(Q_n^\circ) - \mu(Q_n^\circ) + \left| \mu(Q_n^\circ) - \overline \mu \right|.
\end{equation*}
Observe that 
\begin{align*} \label{}
\E \Big[ \left| \mu(Q_n^\circ) - \overline \mu \right| \Big] & \leq \E \Big[ \left| \mu(Q_n^\circ) - \E \left[ \mu(Q_n^\circ) \right]  \right| \Big] + CK_0^23^{-n\alpha} \\
& = 2 \E \Big[ \left(  \E \left[ \mu(Q_n^\circ) \right] - \mu(Q_n^\circ) \right)_+ \Big] + CK_0^23^{-n\alpha}.
\end{align*}
Moreover, 
\begin{align*} \label{}
\lefteqn{\E \left[  \left( \E \left[  \mu(Q_{n+1}^\circ) \right] - \mu(Q_{n+1}^\circ)   \right)_+^2 \right] } \qquad & \\
& \leq \E \left[  \left( \E \left[  \mu(Q_{n}^\circ) \right] - 3^{-d} \sum_{Q_n^\circ(x) \subseteq Q_{n+1}^\circ} \mu(Q_{n}^\circ(x))   \right)_+^2 \right]  + CK_0^4 3^{-n\alpha}  \\
& = 3^{-2d} \sum_{Q_n^\circ(x) \subseteq Q_{n+1}^\circ} \E \left[  \left( \E \left[  \mu(Q_{n}^\circ) \right] - \mu(Q_{n}^\circ(x))   \right)_+^2 \right]  + CK_0^4 3^{-n\alpha}  \\
& =  3^{-d} \E \left[  \left( \E \left[  \mu(Q_{n}^\circ) \right] - \mu(Q_{n}^\circ)   \right)_+^2 \right]  + CK_0^4 3^{-n\alpha}.
\end{align*}
Here we used~\eqref{e.mubounds},~\eqref{e.trimsa} and~\eqref{e.Emuratetrim} in the first line, independence in the second line and finally stationarity in the third line. Since $\E \left[  \left( \E \left[  \mu(Q_{1}^\circ) \right] - \mu(Q_{1}^\circ)   \right)_+^2 \right] \leq CK_0^4$, an iteration of the previous inequality yields
\begin{equation*} \label{}
\E \left[  \left( \E \left[  \mu(Q_{n}^\circ) \right] - \mu(Q_{n}^\circ)   \right)_+^2 \right]  \leq CK_0^4 3^{-n\alpha}.
\end{equation*}
Combining the inequalities above and using~\eqref{e.L1conv} yields~\eqref{e.L1conv2} after a redefinition of $\alpha$.

\smallskip

\emph{Step 5.} We upgrade the stochastic integrability of~\eqref{e.L1conv2}, using an elementary concentration argument. The claim is that, for every $m,n\in\N$ and $t\geq C3^{-n\alpha}$,
\begin{equation}\label{e.firstchebstep}
\P \big[ \left| \nu(Q_{n+m}^\circ,\overline P) - \overline \mu \right| + \left| \overline \mu - \mu(Q_{n+m}^\circ) \right| \geq K_0^2 t \big]  \leq \exp\left( -c3^{dm} t\right).
\end{equation} 
Fix $s>0$ and compute, using~\eqref{e.trimsa}, independence, and stationary:
\begin{align*}
\lefteqn{ \log\E \left[ \exp\left( s3^{dm} \left(\overline \mu - \mu(Q_{n+m}^\circ) \right)_+ \right)  \right]} \qquad  &  \\
& \leq \log \E \left[ \prod_{Q_n^\circ(x)\subseteq Q_{n+m}^\circ}\exp\left( s \left(\overline \mu - \mu(Q_n^\circ(x)) \right)_+ \right)  \right] + CK_0^23^{dm-n}  \\
& =  \sum_{Q_n^\circ(x)\subseteq Q_{n+m}^\circ}  \log\E \left[ \exp\left( s \left(\overline \mu - \mu(Q_n^\circ(x)) \right)_+ \right)  \right]  + CK_0^23^{dm-n}  \\
& =  3^{dm} \log \E \left[ \exp\left( s \left(\overline \mu - \mu(Q_n^\circ) \right)_+ \right) \right] + CK_0^23^{dm-n}.
\end{align*}
By~\eqref{e.mubounds}, 
\begin{equation*} \label{}
\left(\overline \mu - \mu(Q_n^\circ) \right)_+ \leq4K_0^2  \quad \mbox{$\P$--a.s.}
\end{equation*}
Therefore, using the elementary inequalities
\begin{equation*} \label{}
\left\{ 
\begin{aligned}
& \exp(t) \leq 1 + 2t && \mbox{for every} \ 0\leq t \leq 1, \\
& \log(1+t) \leq t && \mbox{for every} \ t \geq 0,
\end{aligned}
\right.
\end{equation*}
we deduce that, for each $0< s \leq (4 K_0)^{-2}$,
\begin{multline} \label{e.grinch}
\log\E \left[ \exp\left( s3^{dm} \left(\overline \mu - \mu(Q_{n+m}^\circ) \right)_+ \right)  \right] \\ 
\leq 2s 3^{dm} \E \left[  \left(\overline \mu - \mu(Q_n^\circ) \right)_+ \right] + CsK_0^23^{dm-n}. 
\end{multline}
Now an application of~\eqref{e.L1conv2} yields
\begin{equation*} \label{e.concenboom}
3^{-dm} \log\E \left[ \exp\left( s3^{dm} \left(\overline \mu - \mu(Q_{n+m}^\circ) \right)_+ \right)  \right] \leq C sK_0^2 3^{-n\alpha}.
\end{equation*}
Take $s:= (4 K_0)^{-2}$ and write the previous inequality in the form
\begin{equation*} 
3^{-dm} \log\E \left[ \exp\left( c 3^{dm} K_0^{-2} \left(\overline \mu - \mu(Q_{n+m}^\circ) \right)_+ \right)  \right] \leq C 3^{-n\alpha}.
\end{equation*}
By a similar argument, replacing $\left( \overline \mu - \mu(Q^\circ_{k}) \right)_+$ by $\left( \nu(Q_k^\circ,\overline P) -  \overline \mu  \right)_+$ and using~\eqref{e.trimsanu} rather than~\eqref{e.trimsa}, we also get
\begin{equation*} \label{}
3^{-dm} \log\E \left[ \exp\left( c 3^{dm} K_0^{-2} \left(\nu(Q_{n+m}^\circ,\overline P) -  \overline \mu  \right)_+ \right)  \right] \leq C 3^{-n\alpha}.
\end{equation*}
Define
\begin{equation*} \label{}
E(U) := \left| \nu(U,\overline P)-\overline \mu \right| + \left| \overline \mu - \mu(U) \right|
\end{equation*}
and observe by $\mu(U) \leq \nu(U,\overline P)$ that
\begin{equation} \label{e.dumbness}
E(U) \leq   2\left( \nu(U,\overline P) -  \overline \mu  \right)_+ + 2 \left(\overline \mu - \mu(U) \right)_+.
\end{equation}
Therefore we obtain
\begin{equation*} 
3^{-dm} \log\E \left[ \exp\left( c 3^{dm} K_0^{-2} E(Q_{n+m}^\circ) \right) \right] \leq C 3^{-n\alpha}.
\end{equation*}
An application of Chebyshev's inequality yields, for every $m,n\in\N$ and $t\geq C3^{-n\alpha}$:
\begin{align}\label{e.firstcheb}
\P \left[ K_0^{-2} E(Q_{n+m}^\circ) \geq t \right] &  =  \P \left[ \exp\left( c3^{dm}K_0^{-2} E(Q_{n+m}^\circ)\right) \geq \exp\left( c3^{dm}t\right) \right] \\
& \leq \exp\left( -c3^{dm} t \right) \E \left[\exp\left( c3^{dm}K_0^{-2} E(Q_{n+m}^\circ)\right) \right]  \nonumber \\
& \leq \exp\left( C3^{dm-\alpha n} - c3^{dm} t \right)  \nonumber \\
& \leq \exp\left( -c3^{dm} t\right). \nonumber
\end{align} 
This is~\eqref{e.firstchebstep}.

\smallskip

\emph{Step 6.} We complete the proof of~\eqref{e.mulimit}. The main point still to be addressed is to allow for arbitrary translations of the cubes, and this is handled by a union bound and a stationarity argument to get the desired estimate from~\eqref{e.firstchebstep}. Recall that, for every $y\in\Rd$, there exists $z\in \Zd$ with $|z - y| \leq \sqrt{d}$ and $z+Q_n^\circ \subseteq y+Q_n$ for every $n\in\N$. Thus by~\eqref{e.Ecubes} we obtain, for every $R>0$ and $n\in\N$,
\begin{equation*} \label{}
\sup_{y\in B_R} \mu(y+Q_n) \geq \max_{z\in \Zd\cap B_{R+\sqrt{d}}}\mu(z+Q_n^\circ) - CK_0^23^{-n} \quad \mbox{$\P$--a.s.}
\end{equation*}
Hence for all $R\geq 1$ and $n\in\N$,
\begin{equation*} \label{}
\sup_{y\in B_R} \left( \overline \mu - \mu(y+Q_n)\right)_+  \leq \max_{z\in \Zd\cap B_{CR}} \left( \overline \mu - \mu(z+Q_n^\circ) \right)_+ + CK_0^23^{-n} \quad \mbox{$\P$--a.s.}
\end{equation*}
By a union bound, stationarity and~\eqref{e.firstchebstep}, we obtain, for every $n,m\in\N$, $R\geq1$ and $t\geq C3^{-n\alpha}$,
\begin{align*}
\lefteqn{\P \left[ K_0^{-2} \sup_{y\in B_R} \left( \overline \mu - \mu(y+Q_{n+m}) \right)_+ \geq t \right]  } \qquad & \\
& \leq \sum_{z\in \Zd \cap B_{CR} }  \P \left[ K_0^{-2} \left( \overline \mu - \mu(z+Q_{n+m}^\circ) \right)_+ \geq t \right]  \\
& \leq CR^d\,  \P \left[ K_0^{-2} \left( \overline \mu - \mu(Q_{n+m}^\circ) \right)_+ \geq t \right] \leq CR^d \exp\left( -c3^{dm} t \right).
\end{align*}
By an analogous argument, using~\eqref{e.Ecubesnu} instead of~\eqref{e.Ecubes}, we obtain, for $t\geq C3^{-n\alpha}$,
\begin{equation*} \label{}
 \P \left[ K_0^{-2} \sup_{y\in B_R} \left( \nu(y+Q_{n+m},\overline P) -  \overline \mu  \right)_+ \geq t \right] \leq CR^d \exp\left( -c3^{dm} t \right).
\end{equation*}
Using~\eqref{e.dumbness} again and replacing $t$ by $C3^{-n\alpha}t$, we obtain, for every $n,m\in\N$, $R\geq1$ and $t\geq 1$,
\begin{equation} \label{e.chebbed2}
\P \left[ K_0^{-2} \sup_{y\in B_R} E(y+Q_{n+m}) \geq C3^{-n\alpha}t \right] \leq CR^d \exp\left( -c3^{dm-n\alpha} t \right). 
\end{equation}
To see that this implies~\eqref{e.mulimit}, fix $s\in (0,d)$.
Choose $m=m(n)$ to be the smallest positive integer such that
\begin{equation*} \label{}
s < \frac{dm- n\alpha}{n+m}.
\end{equation*}
That is, $m(n) : = \lfloor (s+\alpha)n/(d-s) \rfloor \geq c n/(d-s)$. Then~\eqref{e.chebbed2} yields, for every $t\geq 1$,
\begin{equation*} \label{}
\P \left[ K_0^{-2} \sup_{y\in B_R} E(y+Q_{n+m}) \geq C3^{-n\alpha}t \right] \leq CR^d \exp\left( -c3^{s(n+m)}t \right). 
\end{equation*}
This implies~\eqref{e.mulimit} after a redefinition of $\alpha$.

\smallskip

\emph{Step 7.} We prove the flatness estimates~\eqref{e.planes}. It is easier to work with the minimizers for $\nu$, so we handle them first and obtain the flatness of the $\mu$ minimizers as a consequence. Fix $y\in \Rd$ and denote
\begin{equation*} \label{}
v_n(x):= v(x,y+Q_n(x),\overline P), \quad x\in\Rd.
\end{equation*}
In other words, for each $n\in\N$, the function $v_n:\Rd \to \R$ is obtained by splicing together the minimizers for $\nu(\cdot,\overline P)$ in each triadic cube of the form $y+Q_n(x)\subseteq \Rd$. Observe that $v_n \in H^1_{\mathrm{loc}}(\Rd)$. 

\smallskip

Fix $m,n\in\N_*$ and estimate the $L^2$ difference between the scales $n$ and $n+m$ using the Poincar\'e inequality and~\eqref{e.convexL2nu}:
\begin{align*}
\lefteqn{ \fint_{y+Q_{n+m}} \left( v_{n+m}(x) - v_n(x) \right)^2\,dx } \qquad & \\
 & \leq C3^{2(n+m)} \fint_{y+Q_{n+m}} \left| Dv_n(x) - Dv_{n+m}(x) \right|^2\,dx  \\
 & \leq C3^{2(n+m)} \left( \fint_{y+Q_{n+m}} L(Dv_n(x),x) \,dx - \nu(y+Q_{n+m},\overline P) \right)  \\
 & = C3^{2(n+m)} \left( \fint_{y+Q_{n+m}} \nu(y+Q_n(x),\overline P) \,dx - \nu(y+Q_{n+m},\overline P) \right).
\end{align*}
Next we observe that, viewed from a length scale much larger than~$3^n$,  $v_n$ is close to the plane $\overline P\cdot x$:
\begin{align*} \label{}
\lefteqn{\fint_{y+Q_{n+m}} \left( v_n(x) - \overline P\cdot x \right)^2 \,dx} \qquad & \\
 & = \fint_{y+Q_{n+m}} \fint_{y+Q_n(\xi)}  \left( v_n(x) - \overline P\cdot x \right)^2 \,dx \, d\xi \\
& \leq  \fint_{y+Q_{n+m}} C 3^{2n} \fint_{y+Q_n(\xi)}  \left| Dv_n(x) - \overline P \right|^2 \,dx \, d\xi &&  \mbox{(by Poincar\'e ineq.)} \\
& \leq \fint_{y+Q_{n+m}} C 3^{2n} \fint_{y+Q_n(\xi)} \left( \left| Dv_n(x)\right|^2 + \left|\overline P \right|^2 \right) \,dx \, d\xi \\
& \leq C3^{2n} K_0^2 &&  \mbox{(by~\eqref{e.mubounds},~\eqref{e.Pbarbnd}).}
\end{align*}
Assembling these, we obtain
\begin{multline*} \label{}
\fint_{y+Q_{n+m}} \left| v_{n+m}(x) - \overline P\cdot x \right|^2\,dx \\
\leq C3^{2n} K_0^2 + C3^{2(n+m)} \left( \fint_{y+Q_{n+m}} \nu(y+Q_n(x),\overline P) \,dx - \nu(y+Q_{n+m},\overline P) \right).
\end{multline*}
Since $y\in\Rd$ was arbitrary, the previous inequality yields, for each $R\geq 1$,
\begin{multline*} \label{}
\sup_{y \in B_R} 3^{-2(n+m)} \fint_{y+Q_{n+m}} \left| v_{n+m}(x) - \overline P\cdot x \right|^2\,dx \\
 \leq CK_0^2\bigg( 3^{-2m} + \sup_{y\in B_R} \bigg( \left| \nu(y+Q_{n+m},\overline P) - \overline \mu\right| \\+\sup_{x\in y+Q_{n+m}} \left| \nu(y+Q_n(x),\overline P) \,dx - \overline \mu \right|  \bigg) \bigg).
\end{multline*}
Fix $s\in (3d/4,d)$ and apply~\eqref{e.mulimit} to  obtain, for every $n,m\in\N$, $R \geq 1$ and $t\geq 1$,
\begin{multline*}
\P \left[  \sup_{y \in B_R} K_0^{-2}3^{-2(n+m)} \fint_{y+Q_{n+m}} \left| v_{n+m}(x) - \overline P\cdot x \right|^2\,dx \geq C\left( 3^{-2m} +3^{-n \alpha(d-s)} \right)t \right] \\ 
\leq C\left(R^d+3^{d(n+m)}\right) \exp\left(-c3^{sn}t\right).
\end{multline*}
Take $m$ to be the smallest integer larger than $n\alpha(d-s)/2$, replace $n+m$  by $n$ and $s$ by $s-c(d-s)$ and shrink $\alpha$, if necessary, to obtain, for every $t\geq 1$,
\begin{multline*}
\P \left[  \sup_{y \in B_R} K_0^{-2}3^{-2n} \fint_{y+Q_{n}} \left| v_{n}(x) - \overline P\cdot x \right|^2\,dx \geq C3^{-n \alpha(d-s)}t \right] \\ 
\leq C\left(R^d+3^{dn}\right) \exp\left(-c3^{sn}t\right).
\end{multline*}
Replacing $s$ by $s-c(d-s)$ again, we obtain, for every $s\in (0,d)$, $n\in\N$, $R\geq 1$ and $t\geq 1$,
 \begin{multline} \label{e.flatfinnu}
\P \left[  \sup_{y \in B_R} K_0^{-2}3^{-2n} \fint_{y+Q_{n}} \left| v_{n}(x) - \overline P\cdot x \right|^2\,dx \geq C3^{-n \alpha(d-s)}t \right] \\ \leq CR^d \exp\left(-c3^{sn}t\right).
\end{multline}

We complete the proof of~\eqref{e.planes} by obtaining the flatness of minimizers for $\mu$. Fix~$y\in\Rd$. Observe that, by Lemma~\ref{l.convexL2}, 
\begin{equation*} \label{}
\fint_{y+Q_n} \left| Du(x,y+Q_n) - Dv(x,y+Q_n,\overline P) \right|^2\,dx \leq \nu(y+Q_n,\overline P) - \mu (y+Q_n).
\end{equation*}
Hence 
\begin{multline} \label{e.uslopecomp}
\left| \fint_{y+Q_n} Du(x,y+Q_n)\,dx - \overline P \right|^2  \\
= \left|\fint_{y+Q_n} \left( Du(x,y+Q_n) - Dv(x,y+Q_n,\overline P) \right) \,dx   \right|^2  \leq  \nu(y+Q_n,\overline P) - \mu (y+Q_n).
\end{multline}
and so, by the Poincar\'e inequality, 
\begin{align*} \label{}
\lefteqn{ \fint_{y+Q_n} \left( u(x,y+Q_n) - v(x,y+Q_n,\overline P) + y\cdot \overline P\right)^2\,dx} \qquad & \\
& \leq C3^{2n} \bigg(  \fint_{y+Q_n} \left| Du(x,y+Q_n) - Dv(x,y+Q_n,\overline P) \right|^2\,dx \\
& \qquad \qquad + \nu(y+Q_n,\overline P) - \mu (y+Q_n) \bigg) \\
& \leq C3^{2n} \left( \nu(y+Q_n,\overline P) - \mu (y+Q_n) \right).
\end{align*}
The previous inequality,~\eqref{e.mulimit} and~\eqref{e.flatfinnu} yield, for every $s\in (0,d)$, $n\in\N$, $R\geq 1$ and $t\geq 1$,
\begin{multline*} 
\P \left[  \sup_{y \in B_R} K_0^{-2}3^{-2n} \fint_{y+Q_{n}} \left| u(x,y+Q_n) - \overline P\cdot (x-y) \right|^2\,dx \geq C3^{-n \alpha(d-s)}t \right] \\ \leq CR^d \exp\left(-c3^{sn}t\right).
\end{multline*}
This completes the proof of~\eqref{e.planes}.
\end{proof}

\subsection{Convex duality between $\overline \mu$ and $\overline L$}
\label{s.idenL}

An immediate consequence of~\eqref{e.mulimform} is the following formula for $\overline \mu$ in terms of $\overline L$: for every $q\in\Rd$,
\begin{equation} \label{e.predual}
\overline \mu(q) = - \sup_{p\in\Rd} \left( p\cdot q - \overline L(p) \right). 
\end{equation}
Indeed, the difficult half of~\eqref{e.predual} is implied by~\eqref{e.mulimform} and the other, easier half is a consequence of~\eqref{e.munuord}. 

\smallskip

The expression~\eqref{e.predual} asserts that $- \overline\mu$ is the Legendre-Fenchel transform of $\overline L$. Since the latter is uniformly convex by~\eqref{e.Lbarconvex}, it follows by convex duality that, for every $p\in\Rd$,
\begin{equation} \label{e.Lbardual}
\overline L(p) = \sup_{q\in\Rd} \left( p\cdot q + \overline \mu(q) \right). 
\end{equation}
Since $\overline L$ is uniformly convex, its gradient $D\overline L$ is a bijective Lipschitz map on $\Rd$. The formula~\eqref{e.Lbardual} implies that $D\overline L(p)$ is the unique $q$ achieving the supremum in~\eqref{e.Lbardual}. The inverse of the this map is evidently the function $q\mapsto \overline P(q)$ given in the statement of Theorem~\ref{t.mulimit}. That is, $p = \overline P\big( D\overline L(p) \big)$ and moreover, for every $p\in\Rd$,
\begin{equation*} \label{}
\overline L(p) = p \cdot D\overline L(p) + \overline \mu\big(D\overline L(p) \big).
\end{equation*}

In particular, the map $\overline P$ can be inverted, and this allows us to reformulate the statement of Theorem~\ref{t.mulimit} so that the parameter $p$ is given rather than $q$. It is convenient to gather all of the errors we wish to measure with respect to a bounded, connected domain $U\subseteq \Rd$ and a given $p\in\Rd$ into one random variable. Set
\begin{multline} \label{e.error}
\mathcal E(U,p) := \left|  \overline L(p)  -  \mu\left(U,D\overline L(p)\right) - p\cdot D\overline L(p) \right| + \left| \overline L(p) -  \nu(U,p) \right|  \\
+ |U|^{-2/d} \fint_{U} \left( \left( v\!\left(x,U,p\right) - p \cdot x \right)^2 + \left( u\!\left(x,U,D\overline L(p)\right) - p \cdot (x-x_U) \right)^2 \right)\,dx,
\end{multline}
where $x_U:= \fint_U x\,dx$ denotes the barycenter of~$U$.

\begin{corollary}\label{cor.mulim1}
With $\alpha(d,\Lambda)>0$ as in the statement of Theorem~\ref{t.mulimit}, there exist $C(d,\Lambda)\geq 1$ and $c(d,\Lambda)>0$ such that, for every $s\in(0,d)$, $p\in\Rd$, $n\in\N$ and $t\geq 1$,
\begin{equation*} \label{}
\P \Big[ \exists y \in B_R, \ \mathcal E(y+Q_n,p) \geq C \left(K_0+|p|\right)^2 3^{-n\alpha(d-s)}t \Big]  \leq CR^d \exp\left(-c3^{sn} t\right).
\end{equation*}
\end{corollary}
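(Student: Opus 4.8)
The plan is to read off Corollary~\ref{cor.mulim1} from Theorem~\ref{t.mulimit} using the convex duality of Section~\ref{s.idenL}; there is no new analysis, only a change of parameter from $q$ to $p$ together with some bookkeeping.

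First, fix $p\in\Rd$ and set $q:=D\overline L(p)$. The identities recorded just after~\eqref{e.Lbardual} give $\overline P(q)=p$ and $\overline L(p)=p\cdot q+\overline\mu(q)$, and hence, using~\eqref{e.mulimform}, also $\overline L(\overline P(q))=\overline L(p)$. With these I would rewrite the two scalar contributions to $\mathcal E(U,p)$ in~\eqref{e.error}: for $U=y+Q_n$ the first becomes $\bigl|\overline\mu(q)-\mu(y+Q_n,q)\bigr|$ and the second becomes $\bigl|\overline L(\overline P)-\nu(y+Q_n,\overline P)\bigr|$, which together form exactly the quantity whose translate-uniform supremum is estimated in~\eqref{e.mulimit}.

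Next I would check the quadratic (flatness) term. Since $Q_n=Q_n(0)$ is centered at the origin, the barycenter of $y+Q_n$ is $x_{y+Q_n}=y$ and $|y+Q_n|^{-2/d}=|Q_n|^{-2/d}=3^{-2n}$, so the prefactor in~\eqref{e.error} matches that in~\eqref{e.planes}; and because $\overline P(q)=p$ and $q=D\overline L(p)$, the integrand in~\eqref{e.error} equals $\bigl(v(x,y+Q_n,\overline P)-\overline P\cdot x\bigr)^2+\bigl(u(x,y+Q_n,q)-\overline P\cdot(x-y)\bigr)^2$, the integrand of~\eqref{e.planes}. Consequently $\mathcal E(y+Q_n,p)$ is bounded pointwise in $y$ by the sum of the two quantities controlled by~\eqref{e.mulimit} and~\eqref{e.planes}.

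To conclude I would apply Theorem~\ref{t.mulimit} with this choice of $q$ --- legitimate since $D\overline L$ maps $\Rd$ onto $\Rd$, so every $q$ arises in this way and the theorem holds for all $q$ --- then replace $(K_0+|q|)^2$ by $C(K_0+|p|)^2$ via $|D\overline L(p)|\leq C(K_0+|p|)$ from~\eqref{e.DLbarLip}, and finish with a union bound over the two exceptional events of~\eqref{e.mulimit} and~\eqref{e.planes} (which have the same exponential form), adjusting the constants $C$ and $c$. The only place where care is needed --- the ``main obstacle'' only in a bookkeeping sense --- is to have the duality dictionary right, i.e.\ that $\overline P(q)=p$, $\overline L(\overline P(q))=\overline L(p)$ and $\overline\mu(q)=\overline L(p)-p\cdot q$ all hold simultaneously when $q=D\overline L(p)$, and to verify that the normalization $|U|^{-2/d}$ and the barycenter $x_U$ built into~\eqref{e.error} do coincide with the explicit $3^{-2n}$ and the translation vector $y$ appearing in Theorem~\ref{t.mulimit}.
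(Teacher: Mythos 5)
Your proposal is correct, and it is the same argument as the paper's proof: apply Theorem~\ref{t.mulimit} with $q=D\overline L(p)$, use $\overline P(q)=p$ from the duality discussion, absorb $(K_0+|q|)^2\leq C(K_0+|p|)^2$ via~\eqref{e.DLbarLip}, and conclude. The bookkeeping you spell out (the identities $\overline L(p)=p\cdot q+\overline\mu(q)$, $x_{y+Q_n}=y$, $|Q_n|^{-2/d}=3^{-2n}$, and the union bound over the two events of~\eqref{e.mulimit} and~\eqref{e.planes}) is exactly what the paper treats as ``thus yields the corollary'' in its two-line proof.
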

\begin{proof}
Apply Theorem~\ref{t.mulimit} to $q = D\overline L(p)$. By the remarks preceding the statement of the corollary, we have~$\overline P(q)=p$. From the first inequality of~\eqref{e.DLbarLip} we have
\begin{equation*} \label{}
\big|D \overline L(p) \big| \leq C ( K_0 + |p|).
\end{equation*}
Theorem~\ref{t.mulimit} thus yields the corollary. 
\end{proof}

We conclude this section with a further refinement of~Theorem~\ref{t.mulimit} which gives some uniformity in our estimates of $\mathcal E(U,p)$ with respect to~$p$. This is needed in the next section in the argument for the error in the Dirichlet problem.

\begin{corollary}
\label{cor.mulim2}
Fix $M,R,k \geq 1$ and $s\in(0,d)$. There exist $\alpha(d,\Lambda)>0$, $c(d,\Lambda)>0$ and $C(d,\Lambda,s,k)\geq 1$ such that, for every $n\in\N$ and $t\geq 1$,
\begin{multline*} \label{}
\P \Big[ \exists p \in B_{M 3^{kn}},\, \exists y\in B_{R3^{kn}}, \ \  \mathcal E(y+Q_n,p) \geq C \left(K_0+|p|\right)^2 3^{-n\alpha(d-s)}t \Big] 
\\ \leq CM^dR^d\exp\left(-c3^{sn} t\right).
\end{multline*}
\end{corollary}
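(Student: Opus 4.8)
The plan is to derive Corollary~\ref{cor.mulim2} from Corollary~\ref{cor.mulim1} by a covering argument in the variable $p$, interpolating between the points of the net with the continuity estimates for $\mu$, $\nu$ and their minimizers recorded in Section~\ref{s.energies}. I would work throughout on the full-probability event $\{L\in\Omega(K_0)\}$ of~(P3), on which all the ``$\P$--a.s.'' bounds of Section~\ref{s.energies} hold. On this event one first records the deterministic bound $\mathcal{E}(U,p)\leq C_0(K_0+|p|)^2$, valid for every bounded connected open $U\subseteq\Rd$ and every $p\in\Rd$: it follows from the Poincar\'e inequality together with~\eqref{e.mubounds},~\eqref{e.nubounds},~\eqref{e.vgradbnd},~\eqref{e.vgradbndnu},~\eqref{e.Lbargrowth} and~\eqref{e.DLbarLip}, using that $v(\cdot,U,p)-\ell_p\in H^1_0(U)$ and that $u(\cdot,U,D\overline L(p))$ has mean zero on $U$. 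This bound will dispose of the finitely many small scales at the end.

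Next I would fix $n\in\N$ and $t\geq 1$, set $\rho:=3^{-Kn}$ for a constant $K=K(d,\Lambda,s)\geq 1$ chosen below, and let $N\subseteq B_{M3^{kn}}$ be a $\rho$-net of $B_{M3^{kn}}$, so that $|N|\leq C(d)(M3^{kn}/\rho)^d\leq C(d)M^d3^{(k+K)dn}$. Applying Corollary~\ref{cor.mulim1} at each $p_j\in N$ with its radius taken to be $R3^{kn}\geq 1$, and taking a union bound over $N$, I get that outside an event of probability at most $|N|\cdot C(R3^{kn})^d\exp(-c3^{sn}t)\leq CM^dR^d3^{(2k+K)dn}\exp(-c3^{sn}t)$ the inequality $\mathcal{E}(y+Q_n,p_j)\leq C(K_0+|p_j|)^2\,3^{-n\alpha(d-s)}t$ holds simultaneously for all $p_j\in N$ and all $y\in B_{R3^{kn}}$.

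On this event I would interpolate in $p$. Given $p\in B_{M3^{kn}}$ and $y\in B_{R3^{kn}}$, pick $p_j\in N$ with $|p-p_j|\leq\rho$ and write $q:=D\overline L(p)$, $q_j:=D\overline L(p_j)$, so that $|q-q_j|\leq 2\Lambda\rho$ and $|q|,|q_j|\leq C(K_0+|p|)$ by~\eqref{e.DLbarLip}. I compare $\mathcal{E}(y+Q_n,p)$ with $\mathcal{E}(y+Q_n,p_j)$ term by term. The two ``energy'' terms change by at most $C(K_0+|p|)\rho$, by~\eqref{e.LbarLip},~\eqref{e.nucontp},~\eqref{e.mucontq} and the product rule for $p\cdot D\overline L(p)$. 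For the two ``flatness'' terms, the key is to estimate the full differences $[v(\cdot,U,p)-\ell_p]-[v(\cdot,U,p_j)-\ell_{p_j}]$, which lies in $H^1_0(U)$ for $U=y+Q_n$, and $[u(\cdot,U,q)-p\cdot(\cdot-x_U)]-[u(\cdot,U,q_j)-p_j\cdot(\cdot-x_U)]$, which has mean zero on $U$: applying the Poincar\'e inequality to these (its constant is $\sim\diam U\sim 3^n$), together with~\eqref{e.minnucloseL2D} and~\eqref{e.vgradbndnu} for the first and~\eqref{e.minmucloseL2D} and~\eqref{e.vgradbnd} for the second, and the pointwise identity $|a^2-b^2|\leq|a-b|(|a|+|b|)$ under the integral with the Cauchy--Schwarz inequality, gives a change of at most $C(K_0+|p|)^{3/2}\rho^{1/2}$ in each flatness term. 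Since $\rho\leq 1\leq K_0$ forces $(K_0+|p_j|)^2\leq C(K_0+|p|)^2$, choosing $K=K(d,\Lambda,s)$ large enough that $C(K_0+|p|)\rho+C(K_0+|p|)^{3/2}\rho^{1/2}\leq(K_0+|p|)^2\,3^{-n\alpha(d-s)}$ for every $n\geq 1$ (possible since $\rho^{1/2}=3^{-Kn/2}$ and $K_0+|p|\geq 1$), I obtain $\mathcal{E}(y+Q_n,p)\leq C(K_0+|p|)^2\,3^{-n\alpha(d-s)}t$ on this event, for every $p\in B_{M3^{kn}}$ and $y\in B_{R3^{kn}}$. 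Consequently the event in the statement lies inside the complement of the event of the previous paragraph (with the constant adjusted), so its probability is at most $CM^dR^d3^{(2k+K)dn}\exp(-c3^{sn}t)$; for $n\geq n_1(d,\Lambda,s,k)$ this is $\leq CM^dR^d\exp(-\tfrac12 c3^{sn}t)$ since $3^{(2k+K)dn}\leq\exp(\tfrac12 c3^{sn})$ and $t\geq 1$, while for the finitely many $n<n_1$ the deterministic bound $\mathcal{E}(U,p)\leq C_0(K_0+|p|)^2$ makes the event empty once $C$ is enlarged. Relabelling $c$ finishes the proof.

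I expect the flatness-term interpolation to be the only genuine obstacle. A careless estimate would split $[v(\cdot,U,p)-\ell_p]-[v(\cdot,U,p_j)-\ell_{p_j}]$ as $(v(\cdot,U,p)-v(\cdot,U,p_j))-(p-p_j)\cdot x$ and then incur $\sup_{x\in y+Q_n}|x|\sim R3^{kn}$, which would force $\rho$, and hence $|N|$, to shrink with $R$ and $k$ and would produce an extra power of $R$ in the final bound. Keeping these differences grouped, so that the Poincar\'e inequality only sees $\diam(y+Q_n)\sim 3^n$, is exactly what lets $\rho=3^{-Kn}$ depend on $d,\Lambda,s$ alone, so that $|N|$ contributes only the factor $M^d$ up to a power of $3^n$ that is harmless at large scales, yielding the claimed $CM^dR^d\exp(-c3^{sn}t)$.
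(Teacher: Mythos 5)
Your argument is essentially the paper's: a covering of $B_{M3^{kn}}$ in the $p$ variable by a net of mesh size a negative power of $3^n$, a union bound over the net using Corollary~\ref{cor.mulim1}, interpolation to arbitrary $p$ using the continuity estimates of Section~\ref{s.energies}, and absorption of the polynomially-many cubes into the stretched exponential. The paper uses a mesh $h_n = 3^{-n\alpha d}$ and then absorbs the factor $3^{Cn}$ into the exponential by replacing $s$ with $s_1=(s+d)/2$, whereas you absorb it by splitting into $n\geq n_1(d,\Lambda,s,k)$ (direct absorption) and $n<n_1$ (deterministic bound); both devices are fine. One point where you are actually more careful than the paper's displayed estimate~\eqref{e.Econtp}: the reference~\eqref{e.minmucloseL2} controls the $L^2$ difference of $\mu$-minimizers only by $C(K_0+|q_1|+|q_2|)|q_1-q_2|$ (not $|q_1-q_2|^2$), so the $u$-flatness term in $\mathcal E$ is H\"older-$\tfrac12$ rather than Lipschitz in $p$, which is exactly your $C(K_0+|p|)^{3/2}\rho^{1/2}$; the paper's Lipschitz claim in~\eqref{e.Econtp} is slightly overstated but harmless since only a power-law modulus with polynomial constants is needed, and the mesh just has to shrink a bit faster. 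Your closing observation about keeping $v(\cdot,U,p)-\ell_p$ and $v(\cdot,U,p_j)-\ell_{p_j}$ grouped so the Poincar\'e constant is $3^n$ rather than $R3^{kn}$ is correct and is implicitly what the paper does by phrasing~\eqref{e.Econtp} on the centered cube $Q_n$ and invoking stationarity.
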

\begin{proof}
We see from~\eqref{e.nucontp},~\eqref{e.minnucloseL2},~\eqref{e.LbarLip},~\eqref{e.DLbarLip},~\eqref{e.mucontq} and~\eqref{e.minmucloseL2} that the error term is continuous in $p$, uniformly on the support of~$\P$: that is, for every $n\in\N_*$ and $p_1,p_2\in\Rd$,
\begin{equation} \label{e.Econtp}
\big| \mathcal E(Q_n,p_1) - \mathcal E(Q_n,p_2) \big| \leq C\left(K_0+|p_1|+|p_2| \right) |p_1-p_2| \quad \mbox{$\P$--a.s.}
\end{equation}
Therefore, it is enough to check the error estimate for $p$'s on a discrete mesh with spacings $h_n:= 3^{-n\alpha d}$. Denoting this mesh by $G_n:= \left( h_n\Zd \right) \cap B_{M3^{kn}}$ and letting $\Omega'\subseteq \Omega$ be the event with~$\P[\Omega']=1$ on which~\eqref{e.Econtp} holds, we have, for each $n\in\N$ and $t\geq 1$,
\begin{multline}  \label{e.contpush}
\big\{ \forall p\in G_n, \, \forall y\in B_{R3^{kn}} \ \mathcal E(y+Q_n,p) \leq C \left(K_0+|p|\right)^2 3^{-n\alpha(d-s)}t \big\} \cap \Omega' \\
 \subseteq \big\{ \forall p \in B_{M3^{kn}}, \,\forall y\in B_{R3^{kn}}, \ \mathcal E(y+Q_n,p) \leq C \left(K_0+|p|\right)^2 3^{-n\alpha(d-s)}t \big\}.
\end{multline}
Here the $C$ on the right side is larger than the one on the left to accommodate the discretization error of order $C(K_0+|p|)h_n \lesssim C(K_0+|p|)^23^{-n\alpha d}$ coming from the right side of~\eqref{e.Econtp}.

\smallskip

By~\eqref{e.contpush}, a union bound and an application of Corollary~\ref{cor.mulim1}, we find that, for each $n\in\N$ and $t\geq 1$,
\begin{align*} 
\lefteqn{ \P \Big[ \exists p\in B_{M3^{kn}}, \, y\in B_{R3^{kn}}, \ \mathcal E(y+Q_n,p) \geq C \left(K_0+|p|\right)^2 3^{-n\alpha(d-s)} t \Big]  } \qquad & \\
& \leq  \sum_{p\in G_n} \P \Big[  \exists y\in B_{R3^{kn}}, \ \mathcal E(y+Q_n,p) \geq C \left(K_0+|p|\right)^2 3^{-n\alpha(d-s)} t \Big]  \\
& \leq |G_n| \max_{p \in\Rd} \P \Big[ \,  \exists y\in B_{R3^{kn}}, \ \mathcal E(y+Q_n,p) \geq C \left(K_0+|p|\right)^2 3^{-n\alpha(d-s)}  t\Big] \\
& \leq CR^d 3^{knd} |G_n| \exp\left( -c3^{sn}t \right).
\end{align*} 
The number of elements of the set $G_n$ is easy to compute:
\begin{equation*} \label{}
\big| G_n \big| \leq C h_n^{-d} | B_{M3^{kn}}| = C M^d3^{dn(d\alpha + k)}.
\end{equation*}
We thus deduce that, for every $s\in(0,d)$, $n\in\N$ and $t\geq 1$,
\begin{multline} \label{e.unifpsp}
\P \Big[ \forall p \in B_{M3^{kn}}, \,\forall y\in B_{R3^{kn}},   \ \mathcal E(y+Q_n,p) \geq C \left(K_0+|p|\right)^2 3^{-n\alpha(d-s)}t  \Big] \\
 \leq C M^dR^d3^{dn(d\alpha + 2k)} \exp\left( -c3^{sn} t\right) \leq CM^d R^d \exp\left( Cn-c3^{sn}t \right).
\end{multline}
Set~$s_1:=(s+d)/2$, note that~$s_1\in(s,d)$ depends only on~$d$ and $s$ and apply~\eqref{e.unifpsp} with $s_1$ in place of $s$ and use the fact that, for $t\geq 1$,
\begin{equation*} \label{}
\exp\left( Cn-c3^{s_1n}t \right) \leq C\exp\left( -c3^{-sn} t\right),
\end{equation*}
to obtain
\begin{multline*} \label{}
\P \Big[ \forall p \in B_{M3^{kn}}, \,\forall y\in B_{R3^{kn}},   \ \mathcal E(y+Q_n,p) \geq C \left(K_0+|p|\right)^2 3^{-n\alpha(d-s_1)} t \Big] \\
 \leq C M^dR^d \exp\left( -c3^{sn} t\right).
\end{multline*}
Since $d-s_1= (d-s)/2$, we get the desired conclusion after replacing $\alpha$ by $\alpha/2$.  
\end{proof}

\section{The error estimate for the Dirichlet problem}
\label{s.DP}

In this section we prove Theorem~\ref{t.mainthm}, obtaining error estimates in homogenization for Dirichlet problems in bounded Lipschitz domains with fairly general boundary conditions. The arguments here are mostly technical and completely deterministic: all of the heavy lifting was done in the previous section, where in particular we proved error estimates for the Dirichlet problem in cubes with planar boundary conditions. It turns out that this is enough to give us Theorem~\ref{t.mainthm}, as we will see from fairly simple oscillating test function and energy comparison arguments. 

\subsection{The proof of Theorem~\ref{t.mainthm}}

We begin with the statement of an abstract tool which provides control of the error for general Dirichlet problems in terms of the error for the Dirichlet problem in mesoscopic cubes with planar boundary conditions. This ``black box" is oblivious to the randomness and to much of the precise structure of the problem. Although straightforward, its proof (given in the appendix) is unfortunately a rather technical and lengthy energy comparison argument relying on some classical interior regularity results. 

\begin{proposition}
\label{p.blackbox}
Let $U \subseteq \R^d$ be a bounded Lipschitz domain, $K_0,M\geq 1$ and $t>2$. Fix $\ep \in(0,1]$, $L \in\Omega(K_0)$, $g\in W^{1,t}(U)$ satisfying~\eqref{e.gineq} and $u_\ep,  u_{\mathrm{hom}} \in g + H^1_0(U)$ satisfying~\eqref{e.localminz1} and~\eqref{e.localminz2}, respectively. Select $n\in\N$ such that $3^{-n} < \ep \leq 3^{-n+1}$ and fix $m,l\in\N$ such that $m \leq l \leq n$. Then there exist constants $C(d,\Lambda,t,U)\geq1$ and $\beta(d,\Lambda,t) \in (0,1]$ such that 
\begin{multline}\label{e.blackbox}
\left| \fint_U \left( L\left(Du_\ep(x), \frac x\ep \right) - \overline L(D u_{\mathrm{hom}}(x)) \right)\,dx \right| + \fint_U (u_\ep (x)- u_{\mathrm{hom}}(x))^2 \,dx  \\
 \leq  C\mathcal E' + C M^2\left( 3^{-(l-m)} + 3^{-\beta (n-l)} \right),
\end{multline}
where 
\begin{equation*} \label{}
\mathcal E':= \sup \left\{ \mathcal E(y+Q_{m},p) + \mathcal E(y'+Q_{m+2},p') \,:\, y,y'\in B_{C3^{n}}, \ p,p'\in B_{CM3^{(n-m)d/2}} \right\}.
\end{equation*}
and $\mathcal E$ is defined in~\eqref{e.error}. 
\end{proposition}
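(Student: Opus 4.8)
The plan is to run an oscillating test function argument at the mesoscale~$3^{-m}$, partitioning~$U$ into triadic cubes of side~$3^{m}$ and, on each such cube, replacing~$u_{\mathrm{hom}}$ by its affine tangent plane and inserting the minimizers~$v(\cdot,\cdot,p)$ and~$u(\cdot,\cdot,D\overline L(p))$ supplied by~$\mathcal E$. First I would regularize~$u_{\mathrm{hom}}$: by the De Giorgi--Nash--Moser and Meyers estimates, local minimizers of~\eqref{e.localminz2} lie in~$C^{1,\sigma}_{\mathrm{loc}}$, and after mollifying at scale~$3^{-l}$ one obtains a function~$\tilde u$ with~$\|D\tilde u\|_{L^\infty} \lesssim M$, with~$\fint_U |D\tilde u - Du_{\mathrm{hom}}|^2 \lesssim M^2 3^{-\beta(n-l)}$ away from a boundary layer, and with second derivatives controlled by~$3^{l}M$ on each~$3^{-m}$ cube. (Here the exponent~$\beta(d,\Lambda,t)$ enters, through the interpolation of interior Hölder and Sobolev regularity against the~$W^{1,t}$ bound on~$g$; a boundary layer of width~$\sim 3^{-l}$ near~$\partial U$ is discarded and its energy bounded crudely using~(L2) and~\eqref{e.gineq}, contributing~$CM^2 3^{-\beta(n-l)}$ after using the Lipschitz character of~$U$.) The slopes~$p = D\tilde u(x_Q)$ that arise are bounded by~$CM$, but to apply~$\mathcal E$ on~$Q_m$-cubes one rescales; the dilation by~$3^{-m}$ turns a cube~$y+Q_m$ inside~$B_{C3^{-n}}/\ep \subseteq B_{C3^{n}}$ and inflates admissible slopes to~$B_{CM3^{(n-m)d/2}}$, which is exactly the range in the definition of~$\mathcal E'$ (the power~$3^{(n-m)d/2}$ is the worst case needed for the second-derivative correction of~$\tilde u$ to be absorbed).

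The core computation is the energy comparison. For the upper bound on~$\fint_U L(Du_\ep,\tfrac x\ep)$: build a competitor~$w_\ep \in g + H^1_0(U)$ by gluing, on each mesoscale cube~$Q$ (rescaled copies of~$\ep(y+Q_{m})$), the function~$\ep\, v\big(\tfrac x\ep, \cdot, p_Q\big)$ with~$p_Q = D\tilde u(x_Q)$, after correcting the mismatch between neighboring planes by a partition-of-unity patching exactly as in Step~1 of Lemma~\ref{l.patching} (the patching error is controlled by~$|p_Q - p_{Q'}|$, hence by~$3^{-m}\|D^2\tilde u\|_\infty \lesssim 3^{l-m}M$ on adjacent cubes, giving after summing the term~$M^2 3^{-(l-m)}$). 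Minimality of~$u_\ep$, cube-by-cube use of the definition of~$\nu$ and the bound~$|\nu(Q,p) - \overline L(p)| \le \mathcal E$, plus~$|\overline L(p_Q) - \overline L(D\tilde u)|$ controlled by the Lipschitz bound~\eqref{e.LbarLip} and~$\fint_Q |Du_{\mathrm{hom}} - D\tilde u|^2$, yield~$\fint_U L(Du_\ep,\tfrac x\ep) \le \fint_U \overline L(Du_{\mathrm{hom}}) + C\mathcal E' + CM^2(3^{-(l-m)} + 3^{-\beta(n-l)})$. For the matching lower bound one runs the dual argument with~$\mu$: on each cube test against~$D\overline L(p_Q) \cdot Du_\ep$, use~$L(\xi,x) - q\cdot\xi \ge \mu(Q,q)$ pointwise-in-average with~$q = D\overline L(p_Q)$, the identity~$\overline\mu(q) + p\cdot q = \overline L(p)$ from~\eqref{e.mulimform} together with~$|\mu(Q,q) - \overline\mu(q)| \le \mathcal E$, and then handle the linear term~$\fint_U D\overline L(p_Q)\cdot Du_\ep\,dx$ by noting~$u_\ep - g \in H^1_0$ and integrating the (piecewise-constant-slope) test field against it, paying only a patching-type error since~$\div$ of a piecewise-constant field is a sum of boundary layers of total measure~$\lesssim 3^{-m}|U|$. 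Combining the two one-sided bounds gives the first term~$\big|\fint_U L(Du_\ep,\tfrac x\ep) - \fint_U \overline L(Du_{\mathrm{hom}})\big| \le C\mathcal E' + CM^2(3^{-(l-m)}+3^{-\beta(n-l)})$.

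Finally, the~$L^2$ bound~$\fint_U(u_\ep - u_{\mathrm{hom}})^2$ follows from the energy estimate by a convexity/Caccioppoli argument: since~$u_\ep$ and the competitor~$w_\ep$ built above have nearly equal energies, Lemma~\ref{l.convexL2} (applied on each mesoscale cube with~$q = D\overline L(p_Q)$, or its periodic-boundary analogue~\eqref{e.convexL2nu}) gives~$\fint_U |Du_\ep - Dw_\ep|^2 \le C\mathcal E' + CM^2(\cdots)$; then~$w_\ep$ is~$L^2$-close to~$u_{\mathrm{hom}}$ because, on each cube, the flatness part of~$\mathcal E$ forces~$\ep v(\tfrac x\ep,\cdot,p_Q)$ to be within~$3^{-m}$-scaled, $\mathcal E$-small distance of the plane~$p_Q\cdot x$, which is within~$3^{l-m}\cdot 3^{-m}$... of~$\tilde u$, which is~$L^2(U)$-close to~$u_{\mathrm{hom}}$; summing the squared errors over the~$O(3^{(n-m)d})$ cubes and using~$\mathcal E \le$ the sup in~$\mathcal E'$ keeps everything of the advertised order (the boundary layer again costs~$M^2 3^{-\beta(n-l)}$). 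Both~$u_\ep - w_\ep$ and~$w_\ep - u_{\mathrm{hom}}$ vanish on~$\partial U$, so Poincaré on~$U$ upgrades the gradient bounds to the~$L^2$ bound without extra loss. The main obstacle I expect is the bookkeeping in the lower bound: controlling the linear term~$\int_U D\overline L(p_Q)\cdot Du_\ep$ when the slope~$p_Q$ jumps between cubes requires either a discrete summation-by-parts against~$u_\ep - g$ or introducing a divergence-free corrector for the piecewise-constant field~$x\mapsto D\overline L(p_{Q(x)})$, and keeping the resulting error genuinely~$O(M^2 3^{-(l-m)})$ rather than~$O(M^2 3^{-m})$ forces the two-scale split~$m \le l \le n$ to be used carefully (the scale~$l$ is where~$\tilde u$ is smoothed, the scale~$m$ is where homogenization is invoked, and~$n$ is the microscale); this is precisely the technical energy-comparison argument the authors defer to the appendix.
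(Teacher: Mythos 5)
Your overall architecture --- a two-sided energy squeeze via $\nu$ on one side and its dual $\mu$ on the other, patching mesoscale minimizers with a partition of unity (imported from Lemma~\ref{l.patching}), trimming a boundary strip paid for by the Meyers exponent, and concluding through uniform convexity plus Poincar\'e --- is exactly the paper's; the upper bound (your competitor $w_\varepsilon$ built from $\nu$-minimizers with affine data from $u_{\mathrm{hom}}$) is essentially the paper's Lemma~\ref{l.homomod}, and your $L^2$ step via $u_\varepsilon \approx w_\varepsilon \approx u_{\mathrm{hom}}$ is a legitimate alternative to the paper's chain $u_\varepsilon \approx \tilde u \approx u_{\mathrm{hom}}$.

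Where you differ in a way that matters is the lower bound, and this is where there is a genuine gap. You take the slopes $p_Q$ from (a mollification of) $u_{\mathrm{hom}}$ and then must control the linear term $\fint_U D\overline L(p_Q)\cdot Du_\varepsilon$. To close the duality chain you need $\fint_Q D\overline L(p_Q)\cdot Du_\varepsilon \approx D\overline L(p_Q)\cdot p_Q$, i.e.~that $\fint_Q Du_\varepsilon$ is close to $p_Q = \fint_Q Du_{\mathrm{hom}}$; but that is precisely the weak $L^2$ convergence of $Du_\varepsilon$ to $Du_{\mathrm{hom}}$, which is part of what is being proved. You recognize this and propose an integration-by-parts or divergence-corrector repair; the IBP against $u_\varepsilon - g$ produces jump terms on interior cube faces, and a Helmholtz-type corrector for the piecewise-constant field $x\mapsto D\overline L(p_{Q(x)})$ can be constructed, but this is a substantial additional argument that you have not supplied and which the paper never needs. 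The paper sidesteps the entire issue by building the oscillating test function for the $\mu$-side from $u_\varepsilon$ rather than from $u_{\mathrm{hom}}$: one sets $\xi(y) := \fint_{y+Q_n} u_\varepsilon$ and $p(y) := D\xi(y) = \fint_{y+Q_n} Du_\varepsilon$, with $q(y):=D\overline L(p(y))$; then in the inequality $\fint_{y+Q_n}(L(Du_\varepsilon,x)-q(y)\cdot Du_\varepsilon) \ge \mu(y+Q_n,q(y))$ the linear term \emph{is} $q(y)\cdot p(y)$ by definition of $p(y)$, so the duality identity closes with no corrector at all (see the proof of Lemma~\ref{l.heteromod}, Step~3 and the display~\eqref{e.LslashDu1}). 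A second, smaller issue: you invoke $\|D\tilde u\|_{L^\infty}\lesssim M$ from $C^{1,\sigma}_{\mathrm{loc}}$, but the Meyers estimate only gives a global $L^r$ bound, $r=r(d,\Lambda,t)>2$, and interior $C^{1,\sigma}$ bounds degrade near $\partial U$; the paper never uses an $L^\infty$ bound on $Du_{\mathrm{hom}}$, only the $L^r$ bound plus the interior $H^2$ estimate~\eqref{e.H2app} on the trimmed region $V$, and correspondingly the slope range $B_{CM3^{(n-m)d/2}}$ in the definition of $\mathcal E'$ comes from the crude Cauchy--Schwarz bound~\eqref{e.pybnd} on $\fint_{y+Q_n}|Du_\varepsilon|$, not from any pointwise bound on $Du_{\mathrm{hom}}$.
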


The proof of Proposition~\ref{p.blackbox} is presented in Appendix~\ref{ss.bb}.

Assuming the proposition, numerological and bookkeeping details and the choices of the parameters $m$ and $l$ are essentially all that still stand between us and the demonstration of the first main result. 

\begin{proof}[{Proof of Theorem~\ref{t.mainthm}}]
Fix $\ep \in (0,1]$ and $s\in (0,d)$. Take $\alpha(d,\Lambda)>0$ as in the statement of Corollary~\ref{cor.mulim2} and $\beta(d,\Lambda,t)>0$ as in the statement of Proposition~\ref{p.blackbox}. Also set $s_1 := (2t+d)/3$ and $s_2:=(t+2d)/3$ so that $s< s_1<s_2<d$, with the gaps between these numbers bounded by $c(d,\Lambda,s)>0$.

\smallskip

Let $n\in \N$ be such that $3^{-n} < \ep \leq 3^{-n+1}$ and select $m=m(n)\in\N$ to be the smallest integer satisfying
\begin{equation} \label{e.pickm}
 3^{n s_1} \leq 3^{ms_2} \quad \mbox{and} \quad 2d(n-m) \leq m\alpha(d-s_2).
\end{equation}
Note that $m\leq n$. Pick $l\in\N$ to be the smallest integer such that $l \geq (m+n)/2$. It is then evident that, for an exponent $\gamma(d,\Lambda,t)>0$, 
\begin{equation*} \label{}
3^{-(l-m)} + 3^{-\beta(n-l)} \leq C\ep^\gamma.
\end{equation*}
Let $\mathcal E'_n$ be the random variable $\mathcal E'$ defined in the statement of Proposition~\ref{p.blackbox}, with respect to the choice of~$n$ and~$m(n)$, above. 

\smallskip

Proposition~\ref{p.blackbox} gives the estimate
\begin{equation*} \label{}
\fint_U (u_\ep (x)- u_{\mathrm{hom}}(x))^2 \,dx 
 \leq  C\mathcal E'_n + C M^2\ep^\gamma.
\end{equation*}
To complete the proof of the theorem, it therefore suffices to demonstrate that there exists a random variable~$\X$ satisfying~\eqref{e.intX} and $\gamma(d,\Lambda,s)>0$ such that, for every $n\in\N$,
\begin{equation} \label{e.stochintegrab}
\mathcal E'_n \leq C M^2 \left( 1+\mathcal X 3^{-sn} \right) 3^{-n\gamma}.
\end{equation}
We argue that~\eqref{e.stochintegrab} is a consequence of Corollary~\ref{cor.mulim2}. The latter implies that, for every $t\geq 1$,
\begin{equation*} \label{}
\P\left[ \mathcal E'_n \geq C \left(K_0^2+M^23^{(n-m)d}\right)3^{-m\alpha(d-s_2)}t \right] \leq C M^d\exp\left( - c3^{s_2m}t \right).
\end{equation*}
Using~\eqref{e.pickm}, this yields, for $t\geq 1$,
\begin{equation*} \label{}
\P\left[ \mathcal E'_n \geq C M^2 3^{-(n-m)d} t \right] \leq CM^d \exp\left( - c3^{s_1n}t   \right).
\end{equation*}
Replacing $t$ by $1+t$ and rearranging, we obtain, for all $t>0$,
\begin{equation*} \label{}
\P\left[\left( 3^{(n-m)d} M^{-2} \mathcal E'_n -C \right) \geq Ct \right] \leq CM^d \exp\left( - c3^{s_1n}t  \right).
\end{equation*}
Replacing $t$ by $3^{-sn}t$, we obtain, for all $t>0$,
\begin{equation} \label{e.grubcha}
\P\left[ 3^{sn}\left( 3^{(n-m)d} M^{-2} \mathcal E'_n -C \right)_+ \geq Ct \right] \leq CM^d \exp\left( - c3^{(s_1-s)n}t \right).
\end{equation}
Let $\X$ be the random variable
\begin{equation*} \label{}
\X := \sup_{n\in\N} 3^{sn}\left( 3^{(n-m)d} M^{-2} \mathcal E'_n -C \right)_+,
\end{equation*}
where $m=m(n)$ is defined as above. As $3^{-(n-m)d} \leq 3^{-n\gamma}$ for some $\gamma(d,\Lambda,s)>0$, it is evident that~\eqref{e.stochintegrab} holds. By a union bound and summing the right side of~\eqref{e.grubcha} over $n\in\N$, we obtain 
\begin{equation*} \label{}
\P \left[ \X \geq Ct \right] \leq CM^d \exp\left( -ct \right).
\end{equation*}
Replacing $\X$ by $c\X$ and integrating the previous line yields~\eqref{e.intX}. This completes the proof of Theorem~\ref{t.mainthm}.
\end{proof}

\subsection{Uniform approximation in $L^\infty$}
By interpolating $L^\infty$ between $L^2$ and $C^{0,\gamma}$ and applying Theorem~\ref{t.mainthm} and the (nonlinear version of the) De Giorgi-Nash-Moser estimates, we obtain estimates for the Dirichlet problem with the spatial error measured in $L^\infty$ rather than $L^2$. Since the estimate in Theorem~\ref{t.mainthm} is already suboptimal in the exponent, there is essentially no loss in passing from $L^2$ to $L^\infty$. We present a model result in the following corollary, which, in view of its application in the next section, is stated as a local approximation result (rather than an error estimate for the Dirichlet problem) and scaled differently (the microscopic scale is of order one). 

\begin{corollary}
\label{cor.DP}
For every $M\geq 1$, $s\in (0,d)$, there exist $\alpha(d,\Lambda)>0$, $C(d,\Lambda,s)\geq 1$ and a nonnegative random variable $\X$ on $(\Omega,\F)$ satisfying
\begin{equation*} \label{}
\E \big[ \exp(  \X )  \big] \leq CM^d 
\end{equation*}
such that the following holds: for every $L \in\Omega$ $R\geq 1$ and $u \in H^1(B_R)$ satisfying
\begin{equation*} \label{}
K_0 + \frac1R \left( \fint_{B_R} \left| u(x) \right|^2\, dx \right)^{1/2} \leq M
\end{equation*}
and
\begin{equation} \label{e.localmindp}
\int_{B_R} L\left(Du(x), x \right) \,dx \leq \int_{B_R} L\left(Dw(x),x \right) \, dx \quad \mbox{for every} \ w \in u + H^1_0(B_R),
\end{equation}
there exists $v\in H^1(B_{R/2})$ such that 
\begin{equation*}
\int_{B_{R/2}} \overline L(Dv(x)) \,dx \leq \int_{B_{R/2}} \overline L(Dw(x)) \, dx \quad \mbox{for every} \ w \in v + H^1_0(B_{R/2}),
\end{equation*}
and
\begin{equation} \label{e.DPapprox}
\sup_{x\in B_{R/2}} \left| u(x) - v(x) \right| 
 \leq CM \left(1+\X R^{-s} \right) R^{-\alpha(d-s)}.
\end{equation}
\end{corollary}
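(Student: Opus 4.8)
The plan is to reduce the statement, by rescaling, to the fixed--scale Dirichlet estimate of Theorem~\ref{t.mainthm}, and then to upgrade the spatial $L^2$ control it provides to an $L^\infty$ bound by interpolating against interior regularity estimates for $u$ and for the homogenized minimizer. This is exactly the strategy announced in the introduction: interpolate $L^\infty$ between $L^2$ and $C^{0,\gamma}$, using Theorem~\ref{t.mainthm} and the nonlinear De Giorgi--Nash--Moser estimate.

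First I would pass to the unit scale. Put $\ep:=1/R$ and $\hat u(y):=R^{-1}u(Ry)$ for $y\in B_1$; then $\hat u$ is a local minimizer on $B_1$ of $w\mapsto\int_{B_1}L(Dw(y),y/\ep)\,dy$, and the hypothesis becomes $K_0+(\fint_{B_1}|\hat u|^2)^{1/2}\le M$. By the Caccioppoli inequality $(\fint_{B_{7/8}}|D\hat u|^2)^{1/2}\le CM$, and then the Gehring--Meyers higher integrability estimate for minimizers of uniformly convex functionals of quadratic growth (see Giaquinta~\cite{G}) gives an exponent $\delta_0(d,\Lambda)>0$ with $(\fint_{B_{3/4}}|D\hat u|^{2+\delta_0})^{1/(2+\delta_0)}\le CM$. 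Thus $g:=\hat u|_{B_{3/4}}$ lies in $W^{1,2+\delta_0}(B_{3/4})$ and satisfies~\eqref{e.gineq} on $U:=B_{3/4}$ with $t:=2+\delta_0$ and $M$ replaced by $CM$. Fixing $s_0\in(s,d)$ and applying Theorem~\ref{t.mainthm} with these data --- noting that by the gluing principle $\hat u|_{B_{3/4}}$ is exactly the minimizer in $g+H^1_0(B_{3/4})$ of the heterogeneous functional --- one obtains a minimizer $v_0\in g+H^1_0(B_{3/4})$ of $\int_{B_{3/4}}\overline L(D\cdot)$, a random variable $\X_0$ with $\E[\exp\X_0]\le CM^d$, and $\alpha_0(d,\Lambda)>0$ such that
\begin{equation*}
\fint_{B_{3/4}}\left|\hat u(y)-v_0(y)\right|^2\,dy\le CM^2\left(1+\X_0\ep^{s_0}\right)\ep^{\alpha_0(d-s_0)}.
\end{equation*}
Being a global minimizer on $B_{3/4}$, $v_0$ is a local minimizer of $\int\overline L(D\cdot)$ on every subdomain, so $v(x):=R\,v_0(x/R)$ is a local minimizer of $\int\overline L(D\cdot)$ on $B_{R/2}$, as required.

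Next I would interpolate. Comparing $v_0$ with $g$ and using the growth~\eqref{e.Lbargrowth} of $\overline L$ yields $\fint_{B_{3/4}}|Dv_0|^2\le CM^2$, so the classical interior $C^{1,\beta}$ theory for minimizers of the constant--coefficient functional $\int\overline L(D\cdot)$ gives $[v_0]_{C^{0,1}(B_{5/8})}\le CM$; and since the integrand of $\hat u$'s functional lies in $\Omega(K_0)$, the nonlinear De Giorgi--Nash--Moser estimate gives some $\gamma(d,\Lambda)\in(0,1)$ with $[\hat u]_{C^{0,\gamma}(B_{5/8})}\le CM$, hence $[\hat u-v_0]_{C^{0,\gamma}(B_{5/8})}\le CM$. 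Feeding this, together with the elementary interpolation inequality $\|f\|_{L^\infty(B_{1/2})}\le C([f]_{C^{0,\gamma}(B_{5/8})}^{d/(d+2\gamma)}\|f\|_{L^2(B_{5/8})}^{2\gamma/(d+2\gamma)}+\|f\|_{L^2(B_{5/8})})$ (proved by estimating $f$ on a small ball about a near--maximum point), into the $L^2$ bound above and absorbing the lower order term, gives $\|\hat u-v_0\|_{L^\infty(B_{1/2})}\le CM(1+\X_0\ep^{s_0})^{\theta}\ep^{\theta\alpha_0(d-s_0)}$ with $\theta:=\gamma/(d+2\gamma)$. Undoing the rescaling, $\sup_{B_{R/2}}|u-v|=R\sup_{B_{1/2}}|\hat u-v_0|$, and using $(1+\X_0\ep^{s_0})^{\theta}\le 1+\X_0^{\theta}\ep^{s_0\theta}$ with $\ep=1/R$ puts the bound in the form of the right--hand side of~\eqref{e.DPapprox} once $\alpha(d,\Lambda)>0$ is taken small enough; the sub--optimality of $\alpha_0$ is harmless here since $\alpha$ need only be some positive exponent.

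The main obstacle is precisely this last step: producing an $\ep$--independent random variable $\X$ with $\E[\exp\X]\le CM^d$ out of $\X_0^{\theta}$ while keeping the deterministic decay algebraic. As in the proof of Theorem~\ref{t.mainthm}, I would take $\X$ to be a supremum over the triadic scales $R=3^n$ of the suitably rescaled and recentred quantities $\sup_{B_{R/2}}|u-v|$, and control its tail by a union bound over $n$ using the Chebyshev form of Theorem~\ref{t.mainthm}; the condition $s_0>s$ makes the resulting series converge and gives $\P[\X\ge Ct]\le CM^d e^{-ct}$. Since every map $x\mapsto x^{\theta}$ with $\theta\in(0,1]$ satisfies $x^{\theta}\le 1+x$ on $[0,\infty)$, the random variable so obtained is bounded by $c(1+\X_0)$ with $c$ as small as we wish, whence $\exp(\X)\le\exp((1+\X_0)/2)$ and $\E[\exp\X]\le C\,\E[\exp\X_0]\le CM^d$, which both yields the required integrability and closes the argument.
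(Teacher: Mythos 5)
Your main route---Meyers higher integrability for $u$, Theorem~\ref{t.mainthm} for the $L^2$ error, De Giorgi--Nash--Moser for a uniform $C^{0,\gamma}$ bound, and interpolation of $L^\infty$ between $L^2$ and $C^{0,\gamma}$---is exactly the argument the paper uses, and all of the intermediate estimates you invoke are correct. The only inessential variation is that you obtain Lipschitz control of the homogenized minimizer from the constant-coefficient $C^{1,\beta}$ theory, whereas the paper simply uses De Giorgi--Nash--Moser for both $u$ and $v$; this makes no difference.

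Your final paragraph, however, reflects a confusion worth correcting. You announce a ``main obstacle'' of producing an $\ep$-independent random variable out of $\mathcal{X}_0^\theta$ and propose building $\mathcal{X}$ as a supremum over triadic scales of $\sup_{B_{R/2}}|u-v|$, controlled by a union bound. Two things are off here. First, the random variable $\mathcal{X}$ produced by Theorem~\ref{t.mainthm} is already $\ep$-independent by the very formulation of that theorem (it is an \emph{a priori} estimate holding uniformly over all $\ep\in(0,1]$), so no new supremum over scales is needed; and in any case one cannot define $\mathcal{X}$ in terms of $\sup_{B_{R/2}}|u-v|$, because that quantity depends on the arbitrary local minimizer $u$, not just on $L$, and hence would not be a legitimate random variable on $(\Omega,\mathcal{F})$. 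Second, the exponent $\theta\in(0,1]$ introduced by interpolation causes no difficulty at all: since $1+\mathcal{X}R^{-s_0}\geq 1$, one simply has $(1+\mathcal{X}R^{-s_0})^\theta\leq 1+\mathcal{X}R^{-s_0}\leq 1+\mathcal{X}R^{-s}$ for $R\geq 1$ and $s_0\geq s$, so the \emph{same} $\mathcal{X}$ from Theorem~\ref{t.mainthm} serves, with $\alpha$ merely replaced by $\alpha\gamma/(2\gamma+d)$. This is precisely how the paper closes the argument; with that substitution your proof is complete and the last paragraph can be deleted.
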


\begin{proof}
By the interior Meyers estimate (c.f.~\cite[Theorem 6.7]{Giu} and the remarks in Section~\ref{s.regest}), there exists $t(d,\Lambda)> 2$ and $C(d,\Lambda)\geq 1$ such that $u\in W^{1,t}(B_{3R/4})$ and
\begin{equation*} \label{}
\left( \fint_{B_{3R/4}} |Du(x)|^t \, dx \right)^{2/t} \leq C\left( K_0^2 +  \fint_{B_R} |Du(x)|^2\,dx \right).
\end{equation*}
We let $\X$ be the random variable in the statement of Theorem~\ref{t.mainthm} for $U=B_1$ and with $t$ as in the previous sentence. We take $v \in u+H^1_0(B_{3R/4})$ to be the unique minimizer of the Dirichlet problem 
\begin{equation*} \label{}
\int_{B_{3R/4}} \overline L(Dv(x)) \,dx \leq \int_{B_{3R/4}} \overline L(Dw(x)) \, dx \quad \mbox{for every} \ w \in u+H^1_0(B_{3R/4}). 
\end{equation*}
By Theorem~\ref{t.mainthm}, for some $\alpha(d,\Lambda)>0$, we have
\begin{align} \label{e.interset1}
\lefteqn{ \fint_{B_{R}} \left| u(x) - v (x) \right|^2\, dx } \qquad & \\
 & \leq C\left(1+R^{-s}\X \right) R^{2-\alpha(d-s)} \left( K_0^2+\left(  \fint_{B_R} |Du(x)|^t \, dx\right)^{2/t} \right) \nonumber \\
& \leq  C\left(1+R^{-s}\X \right) R^{-\alpha(d-s)} \left( K_0^2R^2+ \fint_{B_R} |u(x)|^2 \, dx \right). \nonumber
\end{align}
By the De Giorgi-Nash-Moser estimate (see~\cite{G} or~\cite{Giu}), there exists $\gamma(d,\Lambda)\in (0,1]$ such that 
\begin{align} \label{e.interset2}
\left[ u - v \right]_{C^{0,\gamma}(B_{3R/4})} & \leq \left[ u \right]_{C^{0,\gamma}(B_{3R/4})} + \left[ v \right]_{C^{0,\gamma}(B_{3R/4})}\\
& \leq C R^{-\gamma} \left( K_0R + \left( \fint_{B_{R}} \left| u(x) \right|^2 \,dx \right)^{1/2} \right). \nonumber
\end{align}

Applying the interpolation inequality
\begin{align*} \label{}
\sup_{x\in B_r} |\phi(x)| & \leq \left( \int_{B_r} |\phi(x)|^2\,dx \right)^{\gamma/(2\gamma+d)} \left( \sup_{x,y\in B_r} \frac{\phi(x)-\phi(y)}{|x-y|^\gamma} \right)^{d/(2\gamma+d)} \\
& = Cr^{d\gamma/(2\gamma+d)}  \left( \fint_{B_r} |\phi(x)|^2\,dx \right)^{\gamma/(2\gamma+d)} \left( \sup_{x,y\in B_r} \frac{\phi(x)-\phi(y)}{|x-y|^\gamma} \right)^{d/(2\gamma+d)} 
\end{align*}
to $\phi = u-v$ with $r=R/2$, and then using~\eqref{e.interset1} and~\eqref{e.interset2} to estimate the two terms on the right side, we obtain
\begin{multline*} \label{}
\sup_{x\in B_{R/2}} \left| u(x) - v(x) \right|  \\
 \leq C(1+R^{-s}\X) R^{-\alpha(d-s)\gamma/(2\gamma+d)} \left( K_0R+ \left( \fint_{B_R} |u(x)|^2 \, dx \right)^{1/2} \right). 
\end{multline*}
This yields the result, after we redefine  $\alpha$ to be $\alpha \gamma / (2\gamma+d)$.

\smallskip

The interpolation inequality is elementary and of course well-known, but for the convenience of the reader we indicate a short proof here. By homogeneity, we may assume that $|\phi(y)| = \phi(y) = \sup_{x\in B_r}|\phi(x)| = 1$. Then the $\|\phi\|_{L^2(B_r)}$ may be estimated from below by $k:= \left[ \phi \right]_{C^{0,\gamma}(B_r)}$ by observing that 
\begin{equation*} \label{}
\phi(x) \geq \left( 1 - k^{-\gamma}|x-y|^\gamma \right)_+,
\end{equation*}
and directly computing the $L^2$ norm of the function on the right. This gives the interpolation inequality.
\end{proof}

\section{Higher regularity: the quenched Lipschitz estimate}
\label{s.C01}

In this section we prove Theorem~\ref{t.c01}. The argument is based on an idea which was first applied in the context of homogenization by Avellaneda and Lin~\cite{AL1,AL2}: functions which can be approximated in $L^\infty$ by functions satisfying an improvement of flatness property must inherit this property-- at least on scales larger enough that the approximation is valid. We proceed by formalizing this idea in an elementary lemma, which makes it quite evident that Theorem~\ref{t.c01} follows from Theorem~\ref{t.mainthm}. The lemma is a deterministic statement which is oblivious even to the PDE. 

\begin{lemma}
\label{l.blackbox2}
For $r> 0$ and $\theta \in (0,1/2]$, let $\mathcal A(r,\theta) \subseteq L^\infty(B_r)$ denote the set of functions~$w\in L^\infty(B_r)$ satisfying
\begin{equation} \label{e.Artheta}
\frac{1}{\theta r} \inf_{p\in\Rd} \osc_{x\in B_{\theta r}} \left( w(x) - p\cdot x\right) \leq \frac{1}{2}\left( \frac1r \inf_{p\in\Rd} \osc_{x\in B_r} \left( w(x) - p\cdot x \right) \right).
\end{equation}
Suppose $\alpha >0$, $K\geq 0$, $1\leq r_0 \leq R/4$ and $u \in L^\infty(B_R)$ have the property that, for every $r \in [r_0,R/2]$, there exists $v\in \mathcal A(r,\theta)$ satisfying
\begin{equation} \label{e.unifapp}
\frac1r \sup_{x\in B_{r}} \left| u(x) - v(x) \right| \leq r^{-\alpha} \left( K + \frac1{2r} \osc_{B_{2r}} u \right). 
\end{equation}
Then there exists $\beta(\theta)>0$ and $C(\alpha,\theta)\geq 1$ such that, for every $s\in[r_0, R/2]$,
\begin{equation} \label{e.c01}
\frac 1s \osc_{B_s} u \leq   C \left( \frac1R \osc_{B_R} u  + \left( \frac sR\right)^{\alpha}K \right)
\end{equation}
and
\begin{multline} \label{e.c1beta}
\frac1s \inf_{p\in\Rd} \osc_{x\in B_s} \left( u(x) - p\cdot x \right) \leq C\bigg( \left( \frac sR\right)^{\beta} \frac1R \inf_{p\in\Rd} \osc_{x\in B_R} \left( u(x) - p\cdot x \right) \\
+ s^{-\alpha} \left( K+ \frac1R \osc_{B_R} u \right) \bigg) .
\end{multline}
\end{lemma}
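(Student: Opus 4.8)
The key observation is that the improvement-of-flatness property~\eqref{e.Artheta}, which holds for each approximating function $v$, is \emph{stable under small perturbations}: if $u$ is uniformly close to such a $v$ at every scale $r\in[r_0,R/2]$, then $u$ inherits a damped version of the flatness iteration, with a geometric gain at each triadic-type step plus an additive error coming from~\eqref{e.unifapp}. So the plan is to set up a discrete iteration over the dyadic scales $r_k := \theta^k R$ (for $k$ ranging while $r_k \geq r_0$), controlling two quantities simultaneously: the \emph{oscillation} $a_k := \tfrac{1}{r_k}\osc_{B_{r_k}} u$ and the \emph{flatness} (oscillation after subtracting the best plane) $b_k := \tfrac{1}{r_k}\inf_{p}\osc_{x\in B_{r_k}}(u(x)-p\cdot x)$.

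First I would record the elementary perturbation estimates. If $w\in\mathcal A(r,\theta)$ and $\sup_{B_r}|u-w|\leq \eta r$, then by the triangle inequality for oscillations, $\tfrac{1}{\theta r}\inf_p\osc_{B_{\theta r}}(u-p\cdot x) \leq \tfrac{1}{\theta r}\inf_p\osc_{B_{\theta r}}(w-p\cdot x) + \tfrac{2\eta}{\theta}$, and similarly $\tfrac{1}{r}\inf_p\osc_{B_r}(w-p\cdot x)\leq b_k + 2\eta$; combining with~\eqref{e.Artheta} gives $b_{k+1} \leq \tfrac12 b_k + C\eta_k$ where $\eta_k$ is the error bound from~\eqref{e.unifapp} at scale $r_k$, namely $\eta_k = r_k^{-\alpha}(K + \tfrac{1}{2r_k}\osc_{B_{2r_k}}u) \leq r_k^{-\alpha}(K + C a_{k-1})$ (absorbing the factor-2 change of ball into a constant times $a$ at the previous scale, using $B_{2r_k}\subseteq B_{r_{k-1}}$ since $\theta\leq 1/2$). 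For the oscillation itself: from $b_{k+1}$ we recover a bound on $a_{k+1}$ by noting that a function whose oscillation after subtracting a plane $p\cdot x$ on $B_{r_{k+1}}$ is small has $\osc_{B_{r_{k+1}}} u \leq \inf_p\osc_{B_{r_{k+1}}}(u-p\cdot x) + r_{k+1}|p|$, and the relevant $|p|$ is controlled by the flatness at the \emph{previous} scale (a standard fact: the best slope changes by at most $O(b_k + b_{k+1} + \eta)$ per step, hence the optimal $|p|$ at scale $r_{k+1}$ is $\lesssim \sum_{j\le k} b_j + \text{errors}$, which telescopes). This gives $a_{k+1} \leq a_k + C(b_k + \eta_k)$.

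Next I would run the coupled recursion. Plugging $\eta_k \le r_k^{-\alpha}(K + Ca_{k-1})$ into $b_{k+1}\le \tfrac12 b_k + C\eta_k$ and iterating the $b$-recursion yields $b_k \lesssim 2^{-k} b_0 + \sum_{j<k} 2^{-(k-j)} r_j^{-\alpha}(K + a_{j-1})$. Since $r_j^{-\alpha} = (\theta^j R)^{-\alpha}$ decays geometrically and is bounded by $r_0^{-\alpha}\le 1$, and since (by a bootstrap, first using the crude bound $a_k \le a_0 + C\sum_j b_j \le C(a_0 + b_0)$, which is finite and scale-independent) the $a_j$ on the right are uniformly bounded, one gets $b_k \lesssim 2^{-k} b_0 + r_k^{-\alpha}(K + a_0 + b_0)$ up to constants and a redefinition of the geometric rate — this is~\eqref{e.c1beta} once we translate $2^{-k}$ into $(s/R)^\beta$ with $\beta := \log 2/\log(1/\theta)$ and re-express $s=r_k$, $a_0 = \tfrac1R\osc_{B_R}u$, $b_0 = \tfrac1R\inf_p\osc_{B_R}(u-p\cdot x) \le a_0$. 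Then feeding the sharpened bound on $b_k$ back into $a_{k+1}\le a_k + C(b_k+\eta_k)$ and summing the now-geometrically-summable right-hand side gives $a_k \le C(a_0 + (s/R)^\alpha K)$, which is~\eqref{e.c01} after absorbing $b_0 \le a_0$ and noting $(s/R)^\beta \le C(s/R)^\alpha$ or vice versa can be arranged by shrinking whichever exponent is relevant (here we want the $K$-term at rate $\alpha$, which survives since $\eta_k$ carries $r_k^{-\alpha}K$). Care must be taken to handle scales $s$ not exactly of the form $\theta^k R$, which is routine by monotonicity of $\osc_{B_r}$ in $r$ and comparing consecutive dyadic scales.

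\textbf{Main obstacle.} The delicate point is the control of the optimal slope $|p|$ when passing from the flatness quantity $b_k$ back to the plain oscillation $a_k$: a priori the best affine approximant can have a large gradient even when $b_k$ is small, so one must show the sequence of best slopes is Cauchy with increments controlled by $b_k + \eta_k$. This requires the standard but slightly technical argument comparing the affine function minimizing the oscillation on $B_{r_k}$ with that on $B_{r_{k+1}}$: on the smaller ball both approximate $u$ to within $O(r_{k+1}(b_{k+1} + b_k + \eta_k))$, so their difference is an affine function small in sup-norm on $B_{r_{k+1}}$, forcing its slope to be $O(b_{k+1}+b_k+\eta_k)$. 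Summing these increments and using the already-established geometric decay of $b_k$ keeps the total slope bounded by $C(a_0 + b_0 + K)$, which closes the estimate. Everything else is bookkeeping with geometric series and the observation that $\alpha>0$ and $r_0\ge 1$ make the error terms $r_k^{-\alpha}$ summable.
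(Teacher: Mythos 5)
Your plan follows the same overall route as the paper: introduce the scales $r_k\approx\theta^k R$, track the normalized oscillation $a_k$ and the normalized flatness $b_k$, derive $b_{k+1}\le\tfrac12 b_k+C\eta_k$ from~\eqref{e.Artheta} and~\eqref{e.unifapp} by the triangle inequality, and control the best affine slopes by telescoping (the ``main obstacle'' you identify is exactly the paper's estimates on $|p_{j+1}-p_j|$ and $|p_j|$). These ingredients are all correct.

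There is, however, a genuine circularity in the passage where you write ``by a bootstrap, first using the crude bound $a_k \le a_0 + C\sum_j b_j \le C(a_0+b_0)$, which is finite and scale-independent.'' The second inequality is not a crude fact: $\sum_j b_j \lesssim b_0$ only follows once you already know the geometric decay $b_j\lesssim 2^{-j}b_0 + r_j^{-\alpha}(\cdots)$, which is what you are still trying to prove. Moreover, the first inequality is itself suspect because your iteration $a_{k+1}\le a_k + C(b_k+\eta_k)$ feeds $a_{k-1}$ back in through $\eta_k = r_k^{-\alpha}(K+Ca_{k-1})$; one cannot simply sum it as if the right-hand side were in closed form. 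The paper resolves both issues in a single stroke: it does not try to get a uniform bound on the oscillations first. Instead it uses the slope bound $|p_j|\le H_0 + C\sum_{i\le j}F_i$ (where $H$ is your $a$ and $F$ is your $b$) to replace $H_{j-1}$ inside the error term, producing a \emph{closed} recursion $F_{j+1}\le \tfrac12 F_j + Cs_j^{-\alpha}\bigl(K+H_0+\sum_{i\le j}F_i\bigr)$ in $F$ alone, and then runs an induction on $j$ with a carefully chosen constant $A$ in the inductive hypothesis $F_j\le 2^{-j}F_0 + A s_j^{-\alpha}(K+H_0)$. Within the inductive step, the accumulated sum $\sum_{i\le j}F_i$ is controlled by the inductive hypothesis, so there is no circularity. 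The oscillation bound~\eqref{e.c01} is then read off a posteriori from $H_j\le 2H_0 + C\sum_{i\le j}F_i$. You should recast your bootstrap in that form: substitute the slope-telescoping bound into the $b$-recursion \emph{before} iterating, and close the estimate by induction on a single sequence, rather than trying to establish a uniform $a_k$ bound up front.
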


The proof of the lemma is given below. We first apply it to show that Theorem~\ref{t.c01} is a consequence of Corollary~\ref{cor.DP}.

\begin{proof}[{Proof of Theorem~\ref{t.c01}}]
Let $M\geq 1$, $R\geq 2$ and $u\in H^1(B_R)$ satisfy~\eqref{e.osccontrol2} and~\eqref{e.localmin2}. Note that by the interior De Giorgi-Nash-Moser estimate, we have
\begin{equation*} \label{}
\frac{2}{R}\osc_{B_{R/2}} u \leq C M.
\end{equation*}
Fix $s\in (0,d)$ and let $\X$ be as in the statement of Corollary~\ref{cor.DP} with $M$ replaced by $C'M$ for $C'\geq 1$ to be selected below. Then, according to~\eqref{e.DPapprox}, there exists $C(d,\Lambda,s)\geq 1$ such that, for every $r\in [\X^{1/s} + C,R/4]$ such that 
\begin{equation*} \label{}
K_0 + \frac{1}{2r} \osc_{B_{2r}} u \leq C'M,
\end{equation*}
there exists a local minimizer $v\in H^1(B_{r})\cap L^\infty(B_r)$ of the homogenized functional satisfying
\begin{equation*} \label{}
\sup_{x\in B_r} \left| u(x) - v(x) \right| \leq r^{-\alpha(d-s)} \left( Kr+\osc_{B_{2r}}u \right).
\end{equation*}
Define the random variable $\Y:= \X^{1/s} + C$. Observe that $\Y$ satisfies~\eqref{e.Yinteg}.

\smallskip

We next establish that local minimizers of the homogenized functional satisfy the improvement of flatness property. Since $\overline L$ is uniformly convex by~\eqref{e.Lbarconvex}, there exists $\theta(d,\Lambda) \in (0,1/2]$ such that every local minimizer $w \in H^1(B_s)$ of the homogenized energy functional satisfies~\eqref{e.Artheta} for every $r\leq s/2$. This is a simple consequence of the interior $C^{1,\beta}$ estimate for uniformly convex energy functionals, which can be found in Giaquinta~\cite{G}, and a scaling argument. In the notation of Lemma~\ref{l.blackbox2}, we have that $w\in \cap_{0<r\leq s/2} \mathcal A(r,\theta)$. 

\smallskip

We claim that, for every $s\in [\Y,R/4]$,
\begin{equation*} \label{}
\frac1s \osc_{B_s} u \leq C \left( K + \frac1R \osc_{B_R} u \right). 
\end{equation*}
We argue by induction: let $n\in \N$ and suppose, for every $r \in \{ 2^{-j}R \,:\, j=2,\ldots,n\}$, 
\begin{equation} \label{e.finindy}
K_0 + \frac{1}{2r} \osc_{B_{2r}} u \leq \frac12 C'M.
\end{equation}
This implies that, for every $r\in [2^{-n}R,R/4]$,
\begin{equation*} \label{}
K_0 + \frac{1}{r} \osc_{B_{r}} u \leq C'M.
\end{equation*}
If $r\geq \Y$, then Lemma~\ref{l.blackbox2} gives
\begin{equation*} \label{}
K_0 + \frac{1}{r} \osc_{B_{r}} u \leq CM \leq \frac12 C'M,
\end{equation*}
if we select $C' = C(d,\Lambda,M,s)$ sufficiently large. Thus~\eqref{e.finindy} holds for $r=2^{-(n+1)}R$. By induction, we deduce that, for every $r\in [\Y,R/4]$,
\begin{equation*} \label{}
K_0 + \frac{1}{r} \osc_{B_{r}} u \leq C'M \leq CM.
\end{equation*}
This is~\eqref{e.c01est}. We get~\eqref{e.c1best} after applying the second conclusion of Lemma~\ref{l.blackbox2}. 
\end{proof}

We conclude this section with the proof of Lemma~\ref{l.blackbox2}.

\begin{proof}[{Proof of Lemma~\ref{l.blackbox2}}]
In this argument, $C$ and $c$ denote positive constants depending only on~$(\alpha,\theta)$ which may vary in each occurrence.

\smallskip

\emph{Step 1.} We setup the argument. Using the assumptions, we find that, for every $s\in [r_0/2,R/4]$,
\begin{equation*} \label{}
\frac1{\theta s} \inf_{p\in\Rd} \osc_{x\in B_{\theta s}} \left( u(x) - p\cdot x \right) \leq  \frac1{2} \left( \frac1s\inf_{p\in\Rd} \osc_{B_s} \left( u(x) - p\cdot x \right) \right)+C s^{-\alpha} \left( K + \frac1{2s} \osc_{B_{2s}} u \right).
\end{equation*}
Define $s_0:= R$ and, for each $j \in\N$, set $s_j:= \theta^{j-1} R/4$. Pick $m\in\N$ such that $s_{m+1} < r_0/2 \leq s_m$. The previous inequality yields, for every $j \in\{ 1,\ldots, m\}$,
\begin{equation} \label{e.bb2iter}
F_{j+1} \leq \frac12 F_j + C s_j^{-\alpha} (K+H_{j-1}),
\end{equation}
where we have set, for each $j\in\{ 0,\ldots,m\}$,
\begin{equation*} \label{}
F_j:= \frac1{s_j} \inf_{p\in\Rd} \osc_{x\in B_{s_j}} \left( u(x) - p\cdot x \right)\quad \mbox{and} \quad H_{j} := \frac{1}{s_j} \osc_{B_{s_j}} u 
\end{equation*}
Select $p_j\in \Rd$ such that 
\begin{equation*} \label{}
\frac{1}{s_j} \osc_{x\in B_{s_j}} \left( u(x) - p_j\cdot x \right) = F_j.
\end{equation*}
Observe that the triangle inequality gives the bounds, for every $j\in \{ 0,\ldots,m\}$:
\begin{equation} \label{e.FjHj}
F_j\leq H_j \leq 2|p_j| + F_j,
\end{equation}
\begin{equation} \label{e.pjtrivbnd}
|p_j| = \frac12\frac1{s_j} \osc_{x\in B_{s_j}} (p\cdot x) \leq \frac12F_j + \frac12H_j \leq H_j
\end{equation}
and, for every $j\in \{ 0,\ldots,m-1\}$,
\begin{align} \label{e.pjtriang}
\left|p_{j+1}-p_j \right|& = \frac12 \frac{1}{s_{j+1}} \osc_{x\in B_{s_{j+1}}} \left(p_{j+1}-p_j \right) \cdot x \\
& \leq \frac1{s_{j+1}} \osc_{x\in B_{s_{j+1}}} \left( u(x) - p_{j+1}\cdot x\right) + \frac1{s_{j+1}} \osc_{x\in B_{s_{j}}} \left( u(x) - p_{j}\cdot x\right) \nonumber\\
& = F_{j+1} + \frac{1}{\theta} F_j \leq C(F_{j+1}+F_j).\nonumber
\end{align}  
Note that~\eqref{e.pjtriang} gives $|p_{j+1}| \leq |p_j| + C(F_{j+1}+F_j)$ and hence, by induction and~\eqref{e.pjtrivbnd}, 
\begin{equation} \label{e.pjbndsumFi}
|p_j| \leq H_0 + C\sum_{i=0}^j  F_i.
\end{equation}
By~\eqref{e.bb2iter},~\eqref{e.FjHj} and~\eqref{e.pjbndsumFi}, we obtain, for every $j\in\{ 0,\ldots,m-1\}$,
\begin{equation} \label{e.bb2iter2}
F_{j+1} \leq \frac12 F_j + Cs^{-\alpha}_j \left(K+H_0+\sum_{i=0}^j F_i\right).
\end{equation}

\smallskip

\emph{Step 2.} By iterating~\eqref{e.bb2iter2}, we show that
\begin{equation} \label{e.bb3iterbby}
F_j \leq 2^{-j} F_0 + Cs_j^{-\alpha} \left( K+H_0 \right).
\end{equation}
Arguing by induction, we fix $A\geq 1$ be a constant to be selected below and suppose that $k\in \{ 0,\ldots,m-1\}$ is such that, for every $j\in \{0,\ldots,k\}$, 
\begin{equation} \label{e.bb3iterindhyp}
F_j \leq  2^{-j} F_0 + As_j^{-\alpha} \left( K+H_0 \right).
\end{equation}
Using~\eqref{e.bb2iter2} and this assumption, we find that 
\begin{align*}
F_{k+1} & \leq \frac12 F_k + Cs^{-\alpha}_k \left(K+H_0+\sum_{j=0}^k F_j\right) \\ 
& \leq 2^{-(k+1)} F_0 + \frac12 As_{k}^{-\alpha} \left( K + H_0 \right) \\
& \qquad + Cs_k^{-\alpha} \left( K + H_0 + \sum_{j=0}^k \left( 2^{-j}F_0 +As_{j}^{-\alpha} (K+H_0) \right) \right) \\
& \leq 2^{{-(k+1)}} F_0 + (K+H_0)s_{k+1}^{-\alpha} \left( \frac12 A + C + CAs_{k}^{-\alpha} \right).
\end{align*}
If $k \leq n$ where $n$ is such that $Cs_n^{-\alpha} \leq \frac14$, then we may select $A= C$ sufficiently large that
\begin{equation*} \label{}
 \frac12 A + C + CAs_{k}^{-\alpha} \leq \frac34A  + C \leq A.
\end{equation*}
In this case, we obtain
\begin{equation*} \label{}
F_{k+1} \leq 2^{-(k+1)}F_0 + A s_{k+1}^{-\alpha} (K+H_0).
\end{equation*}
This is~\eqref{e.bb3iterindhyp} for $j=k+1$. Since~\eqref{e.bb3iterindhyp} trivially holds for $j=0$, we obtain~\eqref{e.bb3iterbby} for all $j\in\{ 0,\ldots,n\}$ by induction. Since $1 \leq s_j / s_n \leq C$ and thus $F_j \leq CF_n$ for all $j\in \{ n+1,\ldots,m\}$, we conclude that~\eqref{e.bb3iterbby} holds for all $j\in \{ 0,\ldots, m\}$.

\smallskip

\emph{Step 3.} The conclusion. Notice that~\eqref{e.bb3iterbby} implies~\eqref{e.c1beta} for $\beta := (\log 2) / |\log \theta|$. To obtain~\eqref{e.c01}, we need to bound $H_j$, and this is obtained from~\eqref{e.pjbndsumFi} and~\eqref{e.bb3iterbby} as follows:
\begin{align*}
H_j \leq F_j + 2|p_j| & \leq 2H_0 + C\sum_{i=0}^j F_i \\
&\leq 2H_0+ \sum_{i=0}^j \left( 2^{-i} F_0 + Cs_i^{-\alpha} \left( K+H_0 \right) \right) \\
& \leq 2H_0 + F_0 + Cs_j^{-\alpha} (K+H_0)  \\
& \leq CH_0 + Cs_j^{-\alpha} K.
\end{align*}
This implies~\eqref{e.c01}.
\end{proof}

\appendix
\section{The proof of Proposition~\ref{p.blackbox} }
\label{ss.bb}
The argument for Proposition~\ref{p.blackbox} requires some ingredients from the classical regularity theory in the calculus of variations: we need (i) an interior $H^2$ estimate for minimizers of the homogenized energy functional and (ii) a global $W^{1,t}$ estimate (for Lipschitz domains), for some $t(d,\Lambda)>2$, for minimizers of both the heterogeneous and homogenized energy functionals. The latter estimate is a generalization to the nonlinear setting of the Meyers estimate for linear equations and can be found in~\cite{Giu}. The former can be found in either~\cite{G} or~\cite{Giu}.

\smallskip

We now fix some notation used in the rest of this subsection. It is convenient to rescale the functions in the hypotheses of Proposition~\ref{p.blackbox} so that the microscopic scale is of unit size and the ratio $\ep >0$ of the length scales is reflected in the size of the domain. We thus fix a (large) bounded Lipschitz domain $U\subseteq \Rd$ and take $n,m\in\N_*$ such that 
\begin{equation} \label{e.Unorm}
3^{d(n+m)} < |U| \leq 3^{d(n+m+1)}. 
\end{equation}
Essentially, this means that $3^{-(n+m)} \approx \ep$, i.e., the macroscopic scale is of order~$3^{n+m}$. We will take the mesoscopic scale to be of order~$3^n$ rather than $3^{n-m}$, as in the statement of the proposition. We fix one more parameter $l \in \N$ such that
\begin{equation*}
n\leq l \quad \mbox{and}  \quad 2l \leq m+n,
\end{equation*}
which describes the (mesoscopic) thickness of a boundary strip we need to remove in the approximation argument. (In practice, we typically choose $m = \lceil cn\rceil$ for a small $0<c\ll 1$, and $l$ roughly equidistant between $n$ and $n+m$.) We also denote the normalized domain by $\widehat U:=U / |U|$.

\smallskip

Define domains $V^\circ \subseteq V \subseteq U$ by 
\begin{equation*} \label{}
V := \left\{ x \in \Rd\,:\, x+Q_{l+2} \subseteq U \right\} \quad \mbox{and} \quad V^\circ := \left\{ x \in \Rd\,:\, x + Q_{l+4} \subseteq U \right\}.
\end{equation*}
Since $U$ is a Lipschitz domain, we have
\begin{equation} \label{e.bdrystrip}
\left| U \setminus V^\circ \right| \leq C 3^{l-m-n}  |U|,
\end{equation}
where the constant $C$ depends only on $d$ and $\widehat U$. We note also that $\dist(V^\circ,\partial V) \geq 3^l$. Denote by $\eta \in C^{\infty}_0(\Rd)$ a cutoff function satisfying
\begin{equation} \label{e.eta}
0 \leq \eta \leq 1,  \quad \eta \equiv 1 \ \mbox{on} \ \overline {V^\circ}, \quad \eta \equiv 0 \ \mbox{in} \ \Rd \setminus V \quad \mbox{and} \quad |D\eta| \leq C3^{-l}.
\end{equation}

\smallskip

Throughout we fix $L\in \Omega(K_0)$, $t>2$, $g\in W^{1,t}(U)$ and denote by $u\in g+ H^1_0(U)$ the unique minimizer of the heterogeneous energy functional: that is, $u$ satisfies
\begin{equation*} \label{}
\int_{U} L\left(Du(x), x \right) \,dx \leq \int_{U} L\left(Dw(x),x \right) \, dx \quad \mbox{for every} \ w \in g + H^1_0(U).
\end{equation*}
Also take $u_{\mathrm{hom}} \in g+H^1_0(U)$ to be the unique minimizer of the constant-coefficient energy functional, i.e., $v$ satisfies
\begin{equation*}
\int_{U} \overline L(Du_{\mathrm{hom}}(x)) \,dx \leq \int_{U} \overline L(Dw(x)) \, dx \quad \mbox{for every} \ w \in g + H^1_0(U).
\end{equation*}
For convenience, we denote
\begin{equation*} \label{}
M:= K_0 + \left( \fint_U |Dg(x)|^t\, dx\right)^{1/t}.
\end{equation*}
as well as
\begin{equation*} \label{}
\mathcal E':= \sup \left\{ \mathcal E(y+Q_{n},p) + \mathcal E(y'+Q_{n+2},p') \,:\, y,y'\in B_{C3^{n+m}}, \ p,p'\in B_{CM3^{md/2}} \right\}, 
\end{equation*}
which is precisely the rescaled version of $\mathcal E'$ defined in the statement of Proposition~\ref{p.blackbox}. The convention for the constants $C$ and $c$ in this appendix is that they depend on $(d,\Lambda,t,\widehat U)$ and may vary in each occurrence. 

\smallskip

In view of the above notation and scaling convention, to obtain Proposition~\ref{p.blackbox} it suffices to prove the estimate
\begin{multline}\label{e.BBwts}
\left| \fint_U \left( L\left(Du(x), x \right) - \overline L(D u_{\mathrm{hom}}(x)) \right)\,dx \right| + 3^{-2(n+m)}\fint_U (u (x)- u_{\mathrm{hom}}(x))^2 \,dx  \\
 \leq   C\mathcal E' + C M^2\left( 3^{n-l} + 3^{\beta (l-n-m)} \right),
\end{multline}
The main step in the proof of~\eqref{e.BBwts} is to show that the (heterogeneous) energy of $u$ is very close to the (effective) energy of $u_{\mathrm{hom}}$. The plan is to modify each minimizer in order to get a candidate for a local minimizer of the other's functional, and thus an upper bound for the energy of the other. These steps are summarized in Lemmas~\ref{l.heteromod} and~\ref{l.homomod}, below. 

\subsection{Classical regularity estimates}
\label{s.regest}

Before proceeding with the proof of~\eqref{e.BBwts}, we record here the needed estimates from regularity theory. According to the Meyers estimate~\cite[Theorem 6.8]{Giu}, there exists $r(d,\Lambda,t)\in (2,t]$ such that $u,\,u_{\mathrm{hom}}\in W^{1,r}(U)$ and 
\begin{equation} \label{e.meyapp}
\left( \fint_{U} \left| Du(x) \right|^r  \, dx \right)^{1/r} + \left( \fint_U  \left| Du_{\mathrm{hom}}(x) \right|^r\, dx \right)^{1/r} \leq CM.
\end{equation}
We also need the interior $H^2$ estimate~\cite[Theorem 8.1]{Giu} for solutions of constant coefficient functionals which, together with an easy covering argument (using that $\dist(V,\partial U) \geq c3^l$) implies~$u_{\mathrm{hom}} \in H^2(V)$ and the gives the estimate
\begin{equation} \label{e.H2app}
 3^{l} \left( \fint_{V}  \left| D^2u_{\mathrm{hom}}(x) \right|^2\, dx \right)^{1/2} \leq C M.
\end{equation}
We remark that while~\cite[Theorem 6.8]{Giu} does not include the~\emph{a priori} estimate we need (rather just the statement that the functions belong to $W^{1,r}(U)$), but it can be extracted from the proof there. Also, the hypotheses in~\cite{Giu} are slightly stronger, namely they demand that the integrand~$L(p,x)$ be $C^2$ in the~$p$ variable rather than just $C^{1,1}$. However, by inspecting the arguments, one finds that the estimates do not depend on the modulus of continuity of $D^2_pL(\cdot,x)$, rather only on an upper bound for $|D^2L(\cdot,x)| =  \left[ D_pL (\cdot,x)\right]_{C^{0,1}}$, which in our case is bounded above by~$\Lambda$. Therefore we obtain the results we need from~\cite{Giu} after a routine approximating argument (by smoothing the coefficients).


\subsection{Estimate for the (homogenized) energy of $u_{\mathrm{hom}}$}

This is the easier of the two directions. The idea is to remove the microscopic oscillations from $u$, and for this it is natural to consider a spatial average on a mesoscopic scale. We thus define
\begin{equation*} \label{}
\xi (y):= \fint_{y+Q_n} u(x)\, dx,\quad y\in V.
\end{equation*}
Notice that $\xi \in H^1(V)$. We next modify $\xi$ in order to get an element of $g+H^1_0(U)$ by setting 
\begin{equation*} 
\tilde u(x) := \eta(x) \xi(x) + \left(1-\eta(x)\right) u(x),
\end{equation*}
It is clear that~$\tilde u \in g+ H^1_0(U)$. We estimate the (homogenized) energy of~$\tilde u$ in terms of the heterogeneous energy of $u$ and the error term $\mathcal E'$ defined above.

\begin{lemma}
\label{l.heteromod}
There exists $C(d,\Lambda,t,\widehat U) \geq 1$ and $r(d,\Lambda,t)\in (2,t]$ such that
\begin{equation} \label{e.estim1}
\fint_{U} \overline L(D\tilde u(x)) \,dx \\ 
\leq \fint_U L(Du(x),x)\,dx  + C\mathcal E' + CM^2 3^{(l-n-m)(1-2/r)}.
\end{equation}
\end{lemma}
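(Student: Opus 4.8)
## Proof Proposal for Lemma~\ref{l.heteromod}

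The plan is to compare the homogenized energy of $\tilde u$ with the heterogeneous energy of $u$ by splitting the domain into three regions governed by the cutoff $\eta$: the ``good'' interior region $V^\circ$ where $\eta \equiv 1$ and $\tilde u = \xi$ is the mesoscopic average, the outer region $U \setminus V$ where $\eta \equiv 0$ and $\tilde u = u$, and the transition strip $V \setminus V^\circ$ where $|D\eta| \leq C3^{-l}$. On the outer region and the transition strip, which together have measure at most $C3^{l-m-n}|U|$ by~\eqref{e.bdrystrip}, I would control $\fint \overline L(D\tilde u)$ crudely using the growth bound~\eqref{e.Lbargrowth} together with the Meyers estimate~\eqref{e.meyapp}: here $D\tilde u = \eta D\xi + (1-\eta)Du + (\xi - u)D\eta$, and H\"older's inequality with exponent $r>2$ applied on a set of small relative measure produces the error term $CM^2 3^{(l-n-m)(1-2/r)}$ — this is precisely where the gain from the higher integrability exponent $r$ enters, since $|U\setminus V^\circ|^{1-2/r}/|U|^{1-2/r} \lesssim 3^{(l-m-n)(1-2/r)}$.

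The heart of the matter is the interior region $V^\circ$, where I must show that $\fint_{V^\circ} \overline L(D\xi(x))\,dx$ is close to $\fint_{V^\circ} L(Du(x),x)\,dx$, up to $C\mathcal E'$. The key observation is that $D\xi(y) = \fint_{y+Q_n} Du(x)\,dx$, so $D\xi$ is a local spatial average of $Du$ on mesoscopic cubes. I would introduce, for each cube $y+Q_n$ in a covering (or after integrating over translates), the minimizer $u(\cdot, y+Q_n, q)$ for $\mu$ with the appropriate dual slope $q$. The strategy is: on a typical mesoscopic cube $y+Q_n$, apply Lemma~\ref{l.convexL2} to compare $Du$ restricted to $y+Q_n$ with $Du(\cdot, y+Q_n, q)$, whose average is $P(y+Q_n,q)$, which by Corollary~\ref{cor.mulim1}/\ref{cor.mulim2} (encoded in $\mathcal E'$) is close to the deterministic slope $p = \overline P(q)$; simultaneously, the flatness estimates in $\mathcal E$ say $u(\cdot,y+Q_n,q)$ is close in $L^2$ to the plane $p\cdot x$. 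Combining these, $D\xi(y)$ is close to $p$ on average, while $\fint_{y+Q_n}L(Du(x),x)\,dx$ is close to $\mu(y+Q_n,q) + q\cdot(\text{average slope}) \approx \overline\mu(q) + q\cdot p = \overline L(p)$ by~\eqref{e.mulimform}; and $\overline L(D\xi(y)) \approx \overline L(p)$ by the Lipschitz bound~\eqref{e.LbarLip}. The Lipschitz/convexity properties of $\overline L$ (from~\eqref{e.Lbarconvex}, \eqref{e.LbarLip}) let me pass from pointwise-in-$y$ estimates to the integral estimate.

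The slopes $q$ that appear must be chosen so that $q = D\overline L$ of the relevant slope; since $|Du|$ is controlled only in $L^r$ and not $L^\infty$, the local slopes on mesoscopic cubes can be as large as $M3^{md/2}$ — this is exactly why $\mathcal E'$ is defined with $p,p' \in B_{CM3^{md/2}}$ and why we need the uniform-in-$p$ version, Corollary~\ref{cor.mulim2}. A Chebyshev/Markov argument bounds the measure of the set of ``bad'' mesoscopic cubes where $|Du|$ is atypically large (using~\eqref{e.meyapp}), and on that small set the crude growth bound again suffices, contributing another error of the same order.

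The main obstacle I anticipate is the bookkeeping in the interior region: carefully organizing the passage from the cube-by-cube energy comparisons (which involve both the subadditive quantity $\mu$ and its minimizers, and the relation $D\xi = $ average of $Du$) into a clean integral bound, while tracking that every error term is either absorbed into $C\mathcal E'$ or into $CM^2 3^{(l-n-m)(1-2/r)}$ (or the weaker $CM^23^{l-n-m}$, which is dominated). A secondary technical point is handling the ``boundary'' mesoscopic cubes near $\partial V^\circ$ that are only partially inside the region of interest, and ensuring the translate-averaging (integrating over $y$ rather than using a fixed disjoint covering) does not create measure-zero pathologies — this is routine but needs care with the definition of $\xi$ on a slightly enlarged domain.
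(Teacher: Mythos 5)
Your treatment of the boundary strip — splitting at $V^\circ$, $V\setminus V^\circ$, $U\setminus V$, using $\overline L$-growth, the Meyers $L^r$ bound and H\"older on a set of relative measure $3^{l-m-n}$ to produce $CM^2 3^{(l-n-m)(1-2/r)}$ — is correct and matches the paper's Steps~2--4. Likewise, you correctly identify that $|D\xi(y)|\lesssim M3^{md/2}$ (because $D\xi$ is an average of $Du$ over $Q_n$ and $|U|\approx 3^{d(n+m)}$) is exactly why $\mathcal E'$ is defined with $p\in B_{CM3^{md/2}}$ and why Corollary~\ref{cor.mulim2} (uniform in $p$) is the right tool.

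The interior step, however, has a genuine gap. You propose to apply Lemma~\ref{l.convexL2} to compare $Du$ on $y+Q_n$ with $Du(\cdot,y+Q_n,q)$, the gradient of the unconstrained mesoscopic $\mu$-minimizer; but Lemma~\ref{l.convexL2} bounds that $L^2$ difference by the \emph{energy excess} of $u$ over $\mu(y+Q_n,q)$, and there is no control on this excess. The macroscopic minimizer $u$, restricted to a mesoscopic cube, minimizes subject to its own (arbitrary) inherited boundary data, not freely, and need not be anywhere near $u(\cdot,y+Q_n,q)$. The downstream claims then collapse: "$u(\cdot,y+Q_n,q)$ close to $\overline P\cdot x$" does not give "$D\xi(y)$ close to $\overline P(q)$," and "$\fint_{y+Q_n} L(Du,x)\,dx$ is close to $\mu(y+Q_n,q)+q\cdot(\text{avg slope})$" is false as a two-sided statement (the $\leq$ direction fails for the same reason). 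What saves the day is that none of this closeness is needed. The lemma only requires a \emph{one-sided} inequality, and that comes for free: with $p(y):=D\xi(y)$ and $q(y):=D\overline L(p(y))$, taking $u$ itself as a test function in the definition of $\mu$ gives
\begin{equation*}
\fint_{y+Q_n} L(Du(x),x)\,dx \;\geq\; \mu\bigl(y+Q_n,q(y)\bigr) + q(y)\cdot p(y),
\end{equation*}
and since $p(y)=\overline P(q(y))$ \emph{exactly} by Legendre duality (no flatness input at this point), the term $\mathcal E(y+Q_n,p(y))$ in the definition~\eqref{e.error} lets you replace $\mu(y+Q_n,q(y))+q(y)\cdot p(y)$ by $\overline L(p(y))=\overline L(D\xi(y))$ up to $\mathcal E'$. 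Integrating in $y$ over $V^\circ$ (Fubini across the mesoscopic cubes) then yields the interior estimate with no reference to $u(\cdot,y+Q_n,q)$ or Lemma~\ref{l.convexL2} at all; the mesoscopic $\mu$-minimizers and their flatness are only needed in the harder reverse direction (Lemma~\ref{l.homomod}), where the patching construction replaces the role you were hoping Lemma~\ref{l.convexL2} would play.
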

\begin{proof}
We divide the argument into several steps. 

\smallskip

\emph{Step 1.} We show that $\xi \in W^{1,\infty}(V)$. Denote, for each $y\in V$,
\begin{equation*} \label{}
p(y):= D\xi(y) = \fint_{y+Q_n} Du(x) \, dx \quad \mbox{and} \quad q(y):= D\overline L(p(y)).
\end{equation*}
Observe that, by H\"older's inequality and~\eqref{e.Unorm}, for each $y\in V$,
\begin{multline} \label{e.pybnd}
\left| p(y) \right|^2 \leq \left(\fint_{y+Q_n} \left| Du(x) \right|\, dx\right)^2 \leq  \frac{|U|}{|Q_n|} \left( \fint_{U} |Du(x)|^2\,dx \right)\\
 \leq C3^{md}\left( \fint_{U} |Du(x)|^2\,dx \right) \leq C 3^{md} M^2.
\end{multline}

\smallskip

\emph{Step 2.} 
We use~\eqref{e.meyapp} to show that 
\begin{equation} \label{e.utildeu}
\fint_{U} \left| D\tilde u(x) \right|^r  \, dx  \leq C \fint_{U} \left| D u(x) \right|^r  \, dx.
\end{equation}
Differentiating the formula above for $\tilde u$, we find
\begin{equation*} \label{}
D\tilde u(x) = Du(x) + \eta(x) \left( D\xi(x) - Du(x) \right) + D\eta(x) \left( \xi(x) - u(x) \right)
\end{equation*}
and thus, by~\eqref{e.eta},
\begin{multline*} \label{}
\left( \fint_{U} \left| D\tilde u(x) \right|^r  \, dx \right)^{1/r}  \leq \left( \fint_{U} \left| D u(x) \right|^r  \, dx \right)^{1/r} + \left( \fint_V \left| D\xi(x) - Du(x)  \right|^r \, dx\right)^{1/r} \\ + C3^{-l} \left( \fint_V \left| \xi(x) - u(x)  \right|^r \, dx \right)^{1/r}.
\end{multline*}
We obtain~\eqref{e.utildeu} from the previous inequality and the following (recall $l\geq n$):
 \begin{equation} \label{e.upone}
\fint_V \left| \xi(x) - u(x)  \right|^r \, dx \leq C3^{nr} \fint_U \left| Du(x) \right|^r \, dx
\end{equation}
and
 \begin{equation} \label{e.uptwo}
\fint_V \left| D\xi(x) - Du(x)  \right|^r \, dx \leq C \fint_U \left| Du(x) \right|^r \, dx.
\end{equation}
The second estimate~\eqref{e.uptwo} follows from the triangle inequality and
\begin{multline*} \label{}
\fint_{V} \left| D\xi(x) \right|^r\,dx  = \fint_{V} \left| \fint_{x+Q_n} Du(y)\, dy \right|^r\, dx \\
 \leq \frac{|U|}{|V|} \fint_U \left| Du(x) \right|^r\,dx \leq C \fint_U \left| Du(x) \right|^r\,dx.
\end{multline*}
To get~\eqref{e.upone}, we use that, for every $z\in Q_{n+1}$,
\begin{multline*} \label{}
\left( \fint_{Q_{n+1}(z)} \left| u(x) - \xi(x) \right|^r\, dx \right)^{1/r}  \leq \left( \fint_{Q_{n+1}(z)} \left| u(x) - \fint_{Q_n(x)} u(y) \, dy \right|^r \,dx \right)^{1/r} \\ 
+ \left( \fint_{Q_{n+1}(z)} \left| \fint_{Q_n(x)} u(y) \, dy -\xi(x) \right|^r \,dx \right)^{1/r}. 
\end{multline*}
The first term on the right is estimated by the Poincar\'e inequality:
\begin{align*} \label{}
\fint_{Q_{n+1}(z)} \left| u(x) - \fint_{Q_n(x)} u(y) \, dy \right|^r \,dx & = \fint_{Q_{n+1}(z)} \fint_{Q_n(\xi)} \left| u(x) - \fint_{Q_n(\xi)} u(y) \, dy \right|^r \,dx\, d\xi  \\
& \leq C3^{nr} \fint_{Q_{n+1}(z) } \fint_{Q_n(\xi) } \left| Du(y) \right|^r\, dy\, d\xi \\
& = C3^{nr} \fint_{Q_{n+1}(z)}  \left| Du(x) \right|^r\, dx,
\end{align*}
while, to estimate the second, we observe that, for every $x+Q_n, x'+Q_n \subseteq Q_{n+1}(z)$,
\begin{align*} \label{}
\left| \fint_{x+Q_n} u(y)\, dy - \fint_{x'+Q_n} u(y)\, dy \right| & = \left| \fint_{Q_n} \int_0^1 (x-x')\cdot Du(tx+ (1-t)x' + y)\, dt\, dy \right|  \\
& \leq 3^d \left|x-x'\right| \fint_{Q_{n+1}(z)} \left|Du(y)\right|\, dy \\
& \leq C 3^n \fint_{Q_{n+1}(z)} \left|Du(y)\right|\, dy.
\end{align*}
This yields 
\begin{align*}
\fint_{Q_{n+1}(z)} \left| \fint_{Q_n(x)} u(y) \, dy -\xi(x) \right|^r \,dx \leq C3^n \fint_{Q_{n+1}(z)} \left| Du(y) \right|^r\, dy
\end{align*}
and completes the proof of~\eqref{e.utildeu}.

\smallskip

\emph{Step 3.} We prove that
\begin{multline} \label{e.LslashDu1}
\fint_{U} L(Du(x),x)\, dx 
\geq \fint_{V^\circ} \left( \mu(y+Q_n,q(y)) +p(y)\cdot q(y)\right)\,dy \\  - CM^2 3^{(l-n-m)(1-2/r)} .
\end{multline}
Taking $u$ as a minimizer candidate in the definition of $\mu(y+Q_n,q(y))$ and using that $p(y)$ is dual to $q(y)$, we have
\begin{align*}
\lefteqn{\fint_{V^\circ} L(Du(x),x)\, dx} \qquad & \\
& \geq \fint_{V^\circ} \fint_{y+Q_n} L(Du(x),x)\, dx\,dy - \frac{1}{|V^\circ|} \int_{U \setminus V^\circ} \left| L(Du(x),x) \right| \, dx \\
& \geq  \fint_{V^\circ} \left( \mu(y+Q_n,q(y)) +p(y)\cdot q(y)\right)\,dy - \frac{C}{|U|} \int_{U \setminus V^\circ}\left(  K_0+\left| Du \right|\right)^2 \, dx.
\end{align*}
We next handle the error arising from the boundary strip. Using~\eqref{e.bdrystrip}, we have 
\begin{align*}
\lefteqn{ \fint_U L(Du(x),x)\,dx -  \fint_{V^\circ} L(Du(x),x)\,dx}  \qquad & \\
& \geq  - C3^{l-n-m} \left| \fint_U L(Du(x),x)\,dx \right|  - \frac{1}{|U|} \left| \int_{U\setminus V^\circ} L(Du(x),x)\,dx \right| \\
& \geq -C3^{l-n-m} \fint_U \left( K_0 + |Du(x)| \right)^2\,dx - \frac{C}{|U|} \int_{U\setminus V^\circ} \left( K_0 + |Du(x)| \right)^2\,dx. 
\end{align*}
Assembling the previous two strings of inequalities gives~\eqref{e.LslashDu1}, after we estimate the error terms in the following way: by~\eqref{e.meyapp}, we have
\begin{equation*} \label{}
\fint_U \left( K_0 + |Du(x)| \right)^2\,dx \leq CM^2
\end{equation*}
and then H\"older's inequality and~\eqref{e.meyapp} give 
\begin{align*} \label{}
\frac{1}{|U|} \int_{U\setminus V^\circ} \left( K_0 + |Du(x)| \right)^2\,dx & \leq \left(\frac{\left|U\setminus V^\circ\right|}{|U|} \right)^{1-2/r} \left( \fint_U  \left( K_0 + |Du(x)| \right)^r\,dx \right)^{2/r}  \\
& \leq C3^{(l-n-m)(1-2/r)} M^2.
\end{align*}
This completes the proof of~\eqref{e.LslashDu1}. 

\smallskip

\emph{Step 4.} The conclusion. According to~\eqref{e.LslashDu1}, 
\begin{align*}
\lefteqn{ \fint_U L(Du(x),x)\,dx - \fint_{V^\circ} \overline L(p(y)) \,dy }  \qquad & \\
& \geq - \fint_{V^\circ} \mathcal E(y+Q_n,p(y)) \, dy - CM^2 3^{(l-n-m)(1-2/r)} \\
& \geq - \mathcal E' - CM^2 3^{(l-n-m)(1-2/r)}.
\end{align*}
It remains to use~\eqref{e.meyapp},~\eqref{e.utildeu} and H\"older's inequality to estimate the energy of $\tilde u$ in the boundary strip, much as we did above for $u$ in Step~3. We have
\begin{multline*} \label{}
\fint_U  \overline L(D\tilde u(y)) \,dy - \fint_{V^\circ}  \overline L(p(y)) \,dy \\
 = - \frac{|U\setminus V^\circ|}{|U|} \fint_{V^\circ}  \overline L(p(y)) \,dy + \frac{1}{|U|} \int_{U\setminus V^\circ} \overline L(D\tilde u(y)) \,dy
\end{multline*}
and we estimate the error terms as follows: by~\eqref{e.utildeu},
\begin{equation*} \label{}
\frac{|U\setminus V^\circ|}{|U|} \left| \fint_{V^\circ}  \overline L(p(y)) \,dy\right| = \frac{|U\setminus V^\circ|}{|U|}\fint_{V^\circ}  \left|   \overline L(D\tilde u(y))\right|  \,dy  \leq CM^23^{l-n-m}
\end{equation*}
and, using~\eqref{e.utildeu} and H\"older's inequality again,
\begin{equation*} \label{}
 \frac{1}{|U|} \int_{U\setminus V^\circ} \overline L(D\tilde u(y)) \,dy \leq CM^23^{(l-n-m)(1-2/r)}. 
\end{equation*}
Combining these gives the lemma.
\end{proof}

\subsection{Estimate of the (heterogeneous) energy of $u$}

We next modify $u_{\mathrm{hom}}$ to obtain a minimizer candidate $\tilde u_{\mathrm{hom}}$ for the heterogeneous energy functional by a stitching together mesoscopic minimizers on an overlapping grid, not unlike the patching construction in the proof of Lemma~\ref{l.patching}. 

\smallskip

We begin the construction by defining an affine approximation to $u_{\mathrm{hom}}$ in the mesoscopic cube $z+Q_{n+2}$ by setting, for each $z \in V\cap 3^n\Zd$,
\begin{equation*} \label{}
\ell_z(x):=p(z) \cdot (x-z)  + \zeta(z),
\end{equation*}
where 
\begin{equation*} \label{}
\zeta(z) := \fint_{z+Q_{n+2}} u_{\mathrm{hom}} (x)\,dx \quad \mbox{and} \quad p(z):= D\zeta(z) = \fint_{z+Q_{n+2}} Du_{\mathrm{hom}} (x) \, dx.
\end{equation*}
For each $z\in V \cap 3^n\Zd$, we introduce mesoscopic minimizers in $z+Q_{n+2}$ of the heterogeneous energy functional with Dirichlet boundary conditions given by $l_z$:
\begin{equation*} \label{}
v_{z}(x):= v\left( x, z+Q_{n+2},D\zeta(z) \right) - D\zeta(z)\cdot z+ \zeta(z), \quad x\in z+ Q_{n+2}(z).
\end{equation*}
Observe that, for each $z\in V \cap 3^n\Zd$, we have $v_z \in H^1(z+Q_{n+2})$, 
\begin{equation*} \label{}
\fint_{z+Q_{n+2}} \left( v_z(x) - u_{\mathrm{hom}}(x) \right)\,dx = 0  \quad \mbox{and} \quad \fint_{z+Q_{n+2}} \left( Dv_z(x) - Du_{\mathrm{hom}}(x) \right)\,dx=0.
\end{equation*}
Next we patch these functions together to get a function defined on $V$. Define a smooth periodic partition of unity by setting
\begin{equation*} \label{}
\phi(x):= \int_{Q_n} \eta(x-y)\,dy.
\end{equation*}
where $\eta \in C^{\infty}(\Rd)$ satisfies
\begin{equation*} \label{}
0 \leq \eta \leq 1, \quad \eta \equiv 0 \ \mbox{in} \ \Rd \setminus Q_{n-1}, \ \quad \int_{\Rd} \eta(y)\,dy =1, \quad \mbox{and} \quad |D\eta| \leq C3^{-(d+1)n}.
\end{equation*}
Here we have essentially mollified the characteristic function of the cube $Q_n$ to obtain a function $\phi$, which is supported in $Q_{n+1}$ and satisfies
\begin{equation} \label{e.phibounds}
\sup_{x\in Q_{n+1}} \left|D\phi(x)\right| \leq C3^{-(d+1)n} |Q_n| \leq C3^{-n}
\end{equation}
and, for every $x\in\Rd$, 
\begin{equation} \label{e.phiunity}
\sum_{z\in 3^n\Zd} \phi(x-z) = 1.
\end{equation}
The latter holds since the cubes $\{ z+Q_n\,:\, z\in 3^n\Zd\}$ form a disjoint partition of~$\Rd$, up to a set of Lebesgue measure zero. Now set
\begin{equation*} \label{}
\tilde v(x):= \sum_{z\in V \cap 3^n\Zd} \phi(x-z) v_z(x), \quad x\in V.
\end{equation*}
Then $\tilde v\in H^1(V)$. Finally, we modify $\tilde v$ to match the boundary condition. Take $\eta \in C^\infty_0(\Rd)$ to be the cutoff function satisfying~\eqref{e.eta}, as above, and set
\begin{equation*} \label{}
\tilde u_{\mathrm{hom}} (x) := \eta(x) \tilde v(x) + \left(1 - \eta(x) \right) u_{\mathrm{hom}} (x).
\end{equation*}
Then $\tilde u_{\mathrm{hom}} \in g+H^1(U)$ is the minimizer candidate for the heterogeneous energy functional. We estimate its energy from above using an argument similar to the one in Lemma~\ref{l.patching} combined with some aspects of the proof of Lemma~\ref{l.heteromod} and relying on Theorem~\ref{t.mulimit}. This result is summarized in the following lemma. 

\begin{lemma}
\label{l.homomod}
There exists $C(d,\Lambda,t,\widehat U)\geq 1$ and $r(d,\Lambda,t)\in(2,t]$ such that 
\begin{multline*} 
 \fint_{U} L\left( D\tilde u_{\mathrm{hom}}(x),x \right)\, dx 
  \leq \fint_{U} \overline L( Du_{\mathrm{hom}}(x)) \, dx \\ + C\mathcal E'
 + CM^2 \left( 3^{n-l} +3^{(l-n-m)(1-2/r)} \right).
\end{multline*}
\end{lemma}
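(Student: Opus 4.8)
The plan is to mirror the structure of the proof of Lemma~\ref{l.heteromod}, but now going in the opposite direction: we build the minimizer candidate $\tilde u_{\mathrm{hom}}$ for the heterogeneous functional by patching together the mesoscopic minimizers $v_z$ of $\nu(z+Q_{n+2},D\zeta(z))$, and we bound its energy from above by the effective energy of $u_{\mathrm{hom}}$ plus the error term $\mathcal E'$ and a few geometric errors. The key inputs are (i) the definition of $\mathcal E$ in~\eqref{e.error}, which controls both $|\overline L(p)-\nu(U,p)|$ and the $L^2$-closeness of $v(\cdot,U,p)$ to the plane $\ell_p$; (ii) Lemma~\ref{l.converseL2} (in its planar form~\eqref{e.converseL2nu}), which lets us perturb minimizers without increasing the energy by more than a multiple of the $L^2$ distance of the gradients; and (iii) the regularity estimates~\eqref{e.meyapp} and~\eqref{e.H2app}.

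First I would record the pointwise bounds $|p(z)|\le CM$ (since $p(z)$ is an average of $Du_{\mathrm{hom}}$, which lies in $L^r$ with $r>2$, over a unit-proportion-of-$|U|$ piece; actually one only needs $|p(z)|^2\le C3^{md}M^2$ via the crude H\"older bound as in~\eqref{e.pybnd}), which is what puts $p(z)$ in the ball $B_{CM3^{md/2}}$ over which $\mathcal E'$ is a supremum, and likewise confirm that $z+Q_{n+2}\subseteq B_{C3^{n+m}}$. Then, mimicking Step~2 of Lemma~\ref{l.heteromod}, I would show $\tilde u_{\mathrm{hom}}$ has $W^{1,r}$ norm controlled by $M$: the patching partition of unity $\{\phi(\cdot-z)\}$ costs a factor $3^{-n}$ on each derivative hitting $\phi$, but this is absorbed because $v_z-u_{\mathrm{hom}}$ has mean zero on $z+Q_{n+2}$ and (by~\eqref{e.minnucloseL2D},~\eqref{e.nuconvexp} and the Poincar\'e inequality) satisfies $\fint_{z+Q_{n+2}}|v_z-\ell_z|^2\le C3^{2n}\mathcal E'$-type bounds together with the $H^2$ estimate~\eqref{e.H2app} giving $\fint_{z+Q_{n+2}}|Du_{\mathrm{hom}}-p(z)|^2\le C3^{2n}\fint_{z+Q_{n+3}}|D^2u_{\mathrm{hom}}|^2$; the cutoff $\eta$ and the boundary strip are handled exactly as in Step~3--4 of Lemma~\ref{l.heteromod}, using~\eqref{e.bdrystrip} and H\"older to produce the $3^{(l-n-m)(1-2/r)}$ error.

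The core computation is the energy estimate in the interior region $V^\circ$. Summing~\eqref{e.converseL2nu} applied on each cube $z+Q_{n+2}$, with the planar boundary data $\ell_z$ and with $\xi=v_z$ the minimizer of $\nu(z+Q_{n+2},p(z))$ (so the $L(D\xi)$ term contributes exactly $\nu(z+Q_{n+2},p(z))$, which by~\eqref{e.error} is within $\mathcal E(z+Q_{n+2},p(z))\le\mathcal E'$ of $\overline L(p(z))$), one gets
\begin{equation*}
\fint_{z+Q_{n+2}} L(D\tilde v(x),x)\,dx \le \overline L(p(z)) + C\mathcal E' + C\fint_{z+Q_{n+2}}\bigl|D\tilde v(x)-Dv_z(x)\bigr|^2\,dx.
\end{equation*}
The quadratic remainder is controlled as in Step~7 of Lemma~\ref{l.patching}: $D\tilde v-Dv_z$ at a point $x\in z+Q_n$ is a sum of terms $\phi(x-y)(Dv_y-Dv_z)$ plus $D\phi(x-y)(v_y-v_z)$, and both are small because the $v_y$ for neighbouring $y$ are all close in $L^2$ and in gradient to the common affine-ish function $u_{\mathrm{hom}}$ (via the $H^2$ bound~\eqref{e.H2app} and the $\mathcal E'$ control of $v_y-\ell_y$); the $D\phi$ factor of $3^{-n}$ exactly compensates the $3^n$ Poincar\'e factor. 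Finally, averaging $\overline L(p(z))$ over $z$ and comparing to $\fint_{V^\circ}\overline L(Du_{\mathrm{hom}})$ costs another $H^2$-Jensen error bounded by $\fint|D^2u_{\mathrm{hom}}|^2\cdot 3^{2n}\le CM^23^{2n-2l}\le CM^23^{n-l}$ (using $n\le l$), and the boundary-strip correction for $\tilde u_{\mathrm{hom}}$ versus $\tilde v$ gives the $3^{(l-n-m)(1-2/r)}$ term. Collecting everything yields the claimed inequality.

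The main obstacle, as in Lemma~\ref{l.patching}, is bookkeeping the quadratic error from the partition of unity: one must verify that the discrepancies $Dv_y-Dv_z$ and $v_y-v_z$ between overlapping mesoscopic cubes are genuinely of the right order. This is where one crucially uses that all the $v_z$ are pinned to the \emph{same} macroscopic function $u_{\mathrm{hom}}$ — via its interior $H^2$ regularity — so that on the scale-$3^n$ overlap region the affine data $\ell_y$ and $\ell_z$ differ by at most $O(3^n\|D^2u_{\mathrm{hom}}\|)$ in slope and the flatness part of $\mathcal E'$ then forces $v_y$ and $v_z$ to be correspondingly close. Unlike in Lemma~\ref{l.patching} there is no need for a Helmholtz--Hodge projection here, because $\tilde v$ is already a bona fide $H^1$ function (it is an honest combination of $H^1$ functions), which makes this direction technically lighter; the price is only the extra $H^2$ error, which is harmless since it is dominated by $M^23^{n-l}$.
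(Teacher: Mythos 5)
The structure you describe is close to the paper's, and your bookkeeping of the various error magnitudes (the $H^2$-based $3^{n-l}M^2$ term, the boundary-strip $3^{(l-n-m)(1-2/r)}M^2$ term, the crude $|p(z)|^2\le C3^{md}M^2$ bound so that the relevant $p$'s are covered by $\mathcal E'$) is sound. However, there is a genuine gap in your central energy inequality.

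You propose to apply~\eqref{e.converseL2nu} on each overlapping cube $z+Q_{n+2}$ with $w=\tilde v$, $\xi=v_z$, $p=p(z)$, to get
$\fint_{z+Q_{n+2}} L(D\tilde v,x)\,dx \le \overline L(p(z)) + C\mathcal E' + C\fint_{z+Q_{n+2}}|D\tilde v - Dv_z|^2\,dx.$
But~\eqref{e.converseL2nu} carries the hypothesis $2\xi - w - \ell_p \in H^1_0(U)$. With $\xi = v_z$ and $w = \tilde v$ on $U = z+Q_{n+2}$, this requires $\tilde v$ to equal $\ell_z$ (up to an additive constant) on $\partial(z+Q_{n+2})$, since $v_z - \ell_z \in H^1_0(z+Q_{n+2})$ and $\ell_z - \ell_{p(z)}$ is constant. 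That fails: near $\partial(z+Q_{n+2})$, $\tilde v$ is a $\phi$-weighted combination of the neighbouring $v_y$'s, which is not boundary-matched to $\ell_z$. (This hypothesis is precisely what lets one test $2\xi-w$ against $\nu(U,p)$ in the proof of~\eqref{e.converseL2nu}; without it there is nothing to compare to.) A secondary annoyance, less fundamental, is that the $z+Q_{n+2}$ cubes overlap $3^{2d}$-fold, so "summing" them requires an averaging weight to avoid overcounting.

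The paper avoids both issues by applying Lemma~\ref{l.converseL2} in its $\mu$-form~\eqref{e.converseL2} on the \emph{disjoint} core cubes $z+Q_n$, which has no boundary hypothesis at all: for any $w\in H^1(z+Q_n)$ one gets
$\fint_{z+Q_n}\bigl(L(Dw,x)-q(z)\cdot Dw\bigr)\,dx \le 2\fint_{z+Q_n}\bigl(L(Dv_z,x)-q(z)\cdot Dv_z\bigr)\,dx-\mu(z+Q_n,q(z))+2\Lambda\fint_{z+Q_n}|Dw-Dv_z|^2\,dx.$
It then separately controls $\fint_{z+Q_n}\bigl(L(Dv_z,x)-q(z)\cdot Dv_z\bigr)\,dx - \mu(z+Q_n,q(z))$ (the content of~\eqref{e.energvz}) by comparing the energy of $v_z$ on subcubes to $\mu$-minimizers and using $\nu(z+Q_{n+2},p(z))\approx\overline L(p(z))$ from $\mathcal E'$; the term $q(z)\cdot D\tilde v$ is then converted to $q(z)\cdot p(z)$ using~\eqref{e.energvz2}, and $\mu(z+Q_n,q(z))+q(z)\cdot p(z)$ is recognized as $\overline L(p(z))$ up to $\mathcal E'$. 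In short: the converse-convexity comparison has to be made against the unconstrained $\mu$-minimizer on the core cube, not against the $\nu$-minimizer on the overlapping cube, and the $\nu\approx\overline L$ information enters one layer deeper, in bounding $v_z$'s energy rather than in bounding $\tilde v$'s. If you rerun your argument with~\eqref{e.converseL2} on $z+Q_n$ in place of~\eqref{e.converseL2nu} on $z+Q_{n+2}$, the rest of your bookkeeping carries over.
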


\begin{proof}
We divide the proof into several steps.

\smallskip

\emph{Step 1.}
We record some estimates on the mesoscopic affine approximations which are needed below. By H\"older's inequality and~\eqref{e.Unorm}, we have, for each $z\in V$,
\begin{multline} \label{e.pybnd2}
\left| p(z) \right|^2 \leq \left(\fint_{z+Q_{n+2}} \left| Du_\mathrm{hom}(x) \right|\, dx\right)^2 \leq  \frac{|U|}{|Q_{n+2}|} \left( \fint_{U} |Du_{\mathrm{hom}}(x)|^2\,dx \right)\\
 \leq C3^{md}\left( \fint_{U} \left|Du_{\mathrm{hom}} (x) \right|^2\,dx \right) \leq C 3^{md} M^2.
\end{multline}
In view of the definition of $v_z$ and the previous inequality, we obtain, for $z\in V\cap \Zd$,
\begin{equation} \label{e.firlinapp}
\left| \fint_{z+Q_{n+2}} L(Dv_z(x),x) \,dx - \overline L(p(z)) \right| 
\leq  \mathcal E'.
\end{equation}
We next show that overlapping mesoscopic affine approximations to $u_{\mathrm{hom}}$ agree, up to a small error. Compute, for every $z,z'\in V\cap 3^n\Zd$ such that $z'\in z+Q_{n+1}$,
\begin{align*} \label{}
p(z) - p(z') & = \fint_{z+Q_{n+2}} \left( Du_{\mathrm{hom}}(x) - Du_{\mathrm{hom}}(x+z'-z) \right)\, dx \\
& = \fint_{z+Q_{n+2}} (z-z')\cdot \int_0^1 D^2 u_{\mathrm{hom}} (x+(1-t)(z'-z))\, dt\, dx
\end{align*}
and, after changing the order of integration, applying Jensen's inequality and using $|z-z'|\leq C3^{n}$, obtain
\begin{equation} \label{e.pypz}
\left|p(z) - p(z')\right|^2 \leq C3^{2n} \fint_{z+Q_{n+3}} \left|  D^2u_{\mathrm{hom}}(x) \right|^2 \, dx.
\end{equation}
A similar computation yields
\begin{equation} \label{e.zz}
\left| \zeta(z) - \zeta(z') \right|^2 \leq C3^{4n} \fint_{z+Q_{n+3}} \left|  D^2u_{\mathrm{hom}}(x) \right|^2 \, dx.
\end{equation}
Combining these, we get control over the differences of the affine approximations:
\begin{equation} \label{e.affineagree}
\sup_{x\in z+Q_{n+3}} \left| \ell_z(x) - \ell_{z'}(x) \right|^2 \leq C 3^{4n} \fint_{z+Q_{n+3}} \left|  D^2u_{\mathrm{hom}}(x) \right|^2 \, dx.
\end{equation}
For our reference, we also return to~\eqref{e.pybnd2} and finish the estimate differently, using Jensen's inequality to obtain, for all such $z,z'\in V$,
\begin{equation} \label{e.pz}
\left|p(z') \right|^2 \leq C \fint_{z'+Q_{n+2}} \left|  Du_{\mathrm{hom}}(x) \right|^2 \, dx \leq C  \fint_{z+Q_{n+3}} \left|  Du_{\mathrm{hom}}(x) \right|^2 \, dx.
\end{equation}

\smallskip

\emph{Step 2.} We show that two mesoscopic minimizers $v_z$ and $v_{z'}$ with overlapping domains agree, up to a small error, on the overlap. The claim is that, for every $z,z'\in V \cap 3^n\Zd$ and $y\in 3^n\Zd$ such that $y+Q_n \subseteq (z+Q_{n+2})\cap (z'+Q_{n+2})$, we have 
\begin{multline} \label{e.mesogradwts}
3^{-2n} \fint_{y+Q_n} \left( v_{z'}(x) - v_z(x)  \right)^2\, dx  + \fint_{y+Q_n} \left| Dv_z(x) - Dv_{z'}(x) \right|^2\, dx \\
 \leq C\mathcal E' + C3^{n-l} \left( K_0^2 + \fint_{z+Q_{n+3}} \left| Du_{\mathrm{hom}}(x) \right|^2\, dx \right) 
+  C3^{n+l}\fint_{z+Q_{n+3}} \left| D^2u_{\mathrm{hom}}(x) \right|^2\,dx.
\end{multline}
The estimate of the first term on the left side of~\eqref{e.mesogradwts} follows from~\eqref{e.affineagree}, the definition of $\mathcal E'$, the triangle inequality and the fact that~$n\leq l$. Therefore we focus on estimating the difference of gradient overlaps. 

We first show that the energy of each mesoscopic minimizer $v_z$ spreads evenly in the $3^{2d}$ subcubes of $z+Q_{n+2}$ which are proportional to $Q_n$. 
We may these enumerate these subcubes by $y+Q_n$ for $y\in z+J_n$ where $J_n:= 3^n\Zd \cap Q_{n+2}$. Now compute, for $y\in z+J_n$, 
\begin{align*}
\lefteqn{ \fint_{y+Q_{n}} \left(  L(Dv_z(x),x) - q(y)\cdot Dv_z(x) \right)\,dx } \qquad &\\
& = 3^{2d} \fint_{z+Q_{n+2}} L(Dv_z(x),x)\,dx - 3^{2d} q(y) \cdot p(z) \\
& \qquad - \sum_{y'\in z+ J_n\setminus \{ y \}} \fint_{y'+Q_n}\left( L(Dv_z(x),x) - q(y) \cdot Dv_z(x)  \right)\,dx \\
& \leq 3^{2d} \nu(z+Q_{n+2},p(z)) - 3^{2d} q(y) \cdot p(z) - \sum_{y'\in z+ J_n\setminus \{ y \}}\mu\!\left(y'+Q_{n},q(y)\right).
\end{align*}
Using~\eqref{e.nucontp} and the triangle inequality, we obtain
\begin{align*}
\lefteqn{ \fint_{y+Q_{n}} \left(  L(Dv_z(x),x) - q(y)\cdot Dv_z(x) \right)\,dx } \qquad &\\
& \leq \mu(y+Q_n,q(y)) + C \mathcal E\left(z+Q_{n+2},p(y) \right) + \sum_{y'\in z+J_n} \mathcal E \left( y'+Q_n,p(y) \right) \\
& \qquad + C\left(K_0+|p(z)|+|p(y)| \right)\left|p(y) - p(z) \right|  + C\left|q(y) \right| \left| p(y) - p(z)\right| . 
\end{align*}
From the previous inequality,~\eqref{e.pypz} and~\eqref{e.pz}, we get
\begin{multline}\label{e.energvz}
 \fint_{y+Q_{n}} \left(  L(Dv_z(x),x) - q(y)\cdot Dv_z(x) \right)\,dx - \mu(y+Q_n,q(y)) \\
 \leq  C\mathcal E' + C3^{n} \left( K_0^2 + \fint_{z+Q_{n+3}} \left| Du_{\mathrm{hom}}(x) \right|^2\, dx \right)^{1/2} \left( \fint_{z+Q_{n+3}} \left| D^2u_{\mathrm{hom}}(x) \right|^2\,dx \right)^{1/2}.
\end{multline}
We now apply Lemma~\ref{l.convexL2} to obtain, for any $z,z'\in 3^n\Zd \cap V$ and $y\in 3^n\Zd$ such that $y+Q_{n} \subseteq (z+Q_{n+2}) \cap (z'+Q_{n+2})$,
\begin{multline*} \label{}
\fint_{y+Q_n} \left| Dv_z(x) - Dv_{z'}(x) \right|^2 \, dx \\
\leq C\mathcal E' + C3^{n} \left( K_0^2 + \fint_{z+Q_{n+3}} \left| Du_{\mathrm{hom}}(x) \right|^2\, dx \right)^{1/2} \left( \fint_{z+Q_{n+3}} \left| D^2u_{\mathrm{hom}}(x) \right|^2\,dx \right)^{1/2}.
\end{multline*}
Young's inequality gives the desired estimate for the $L^2$ difference of the gradients and completes the proof of~\eqref{e.mesogradwts}.

\smallskip

For future reference, we note that
\begin{multline}\label{e.energvz2}
\left| \fint_{z+Q_{n}}  Dv_z(x) \,dx - p(z) \right|^2  \\
 \leq  C\mathcal E' + C3^{n-l} \left( K_0^2 + \fint_{z+Q_{n+3}} \left|Du_{\mathrm{hom}}(x) \right|^2\, dx \right) 
+  C3^{n+l}\fint_{z+Q_{n+3}} \left| D^2u_{\mathrm{hom}}(x) \right|^2\,dx.
\end{multline}
To see this, we use Lemma~\ref{l.convexL2} and~\eqref{e.energvz} to compare the gradients of $v_z$ and the minimizer of $\mu(z+Q_n,q(z))$ and then apply~\eqref{e.uslopecomp}, before using Young's inequality as above.

\smallskip

\emph{Step 3.} We use Lemma~\ref{l.converseL2} and~\eqref{e.mesogradwts} to derive an upper bound for the (heterogeneous) energy of $\tilde v$ in $V$. The claim is that 
\begin{equation} \label{e.tilduhomW}
 \fint_{V} L\left( D\tilde v(x),x \right)\, dx \leq \fint_{V} \overline L( Du_{\mathrm{hom}}(x)) \, dx + C\mathcal E' + C3^{n-l}M^2.
  \end{equation}
For each $z \in W\cap 3^n\Zd$ and $x\in Q_n(z) \subseteq W$, we have
\begin{equation*} \label{}
\tilde v(x) - v_z(x) =  \sum_{y \in z+J_n'} \phi(x-y) \left(v_y(x) -   v_z(x)  \right)
\end{equation*}
where we denote $J_n':= \left\{ y\in 3^n\Zd\,:\, (y+J_n)\cap J_n \neq \emptyset \right\}$, which we observe has at most $C$ elements. Differentiating this expression yields, for all such $z$ and $x$, 
\begin{multline*} \label{}
D\tilde v(x) - Dv_z(x) \\ = \sum_{y \in z+J_n'} \left( D\phi(x-y) \left( v_y(x) - v_z(x)  \right) + \phi(x-y) \left( Dv_y(x) - Dv_z(x) \right) \right).
\end{multline*}
and then applying~\eqref{e.mesogradwts}, using the bound $|D\phi| \leq C3^{-n}$ from~\eqref{e.phibounds}, we obtain
\begin{multline} \label{e.tuhmgvz}
 \fint_{z+Q_n} \left| D\tilde v(x) - Dv_z(x) \right|^2\, dx \\
  \leq C\mathcal E' + C3^{n-l} \left( K_0^2 + \fint_{z+Q_{n+3}} \left| Du_{\mathrm{hom}}(x) \right|^2\, dx \right) 
+  C3^{n+l}\fint_{z+Q_{n+3}} \left| D^2u_{\mathrm{hom}}(x) \right|^2\,dx.
\end{multline}
The previous inequality,~\eqref{e.energvz} with $y=z$ and Lemma~\ref{l.converseL2} yield
\begin{multline*} \label{}
\fint_{z+Q_n} \left( L(D\tilde v(x),x) - q(z) \cdot D\tilde v(x) \right) \,dx - \mu(z+Q_n,q(z) ) \\
\leq C\mathcal E' + C3^{n-l} \left( K_0^2 + \fint_{z+Q_{n+3}} \left| Du_{\mathrm{hom}}(x) \right|^2\, dx \right) 
+  C3^{n+l}\fint_{z+Q_{n+3}} \left| D^2u_{\mathrm{hom}}(x) \right|^2\,dx.
\end{multline*}
By~\eqref{e.energvz2} and~\eqref{e.tuhmgvz}, we have
\begin{multline*} \label{}
\left| \fint_{z+Q_n} D\tilde v(x) \,dx - p(z) \right|^2 \\
  \leq C\mathcal E' + C3^{n-l} \left( K_0^2 + \fint_{z+Q_{n+3}} \left|Du_{\mathrm{hom}}(x) \right|^2\, dx \right) 
+  C3^{n+l}\fint_{z+Q_{n+3}} \left| D^2u_{\mathrm{hom}}(x) \right|^2\,dx.
\end{multline*}
The previous two lines yield
\begin{multline*} \label{}
\fint_{z+Q_n}  L(D\tilde v(x),x)  \,dx -  q(z)\cdot p(z)-\mu(z+Q_n,q(z) ) \\
\leq C\mathcal E' + C3^{n-l} \left( K_0^2 + \fint_{z+Q_{n+3}} \left| Du_{\mathrm{hom}}(x) \right|^2\, dx \right) 
+  C3^{n+l}\fint_{z+Q_{n+3}} \left| D^2u_{\mathrm{hom}}(x) \right|^2\,dx.
\end{multline*}
The sum of the last two terms on the left side is $-\overline L(p(z))$, up to an error of~$\mathcal E'$. Using this and~\eqref{e.firlinapp}, we get 
\begin{multline} \label{e.bleck}
\fint_{z+Q_n}  L(D\tilde v(x),x)  \,dx - \overline L(p(z)) \\
\leq C\mathcal E' + C3^{n-l} \left( K_0^2 + \fint_{z+Q_{n+3}} \left| Du_{\mathrm{hom}}(x) \right|^2\, dx \right) 
+  C3^{n+l}\fint_{z+Q_{n+3}} \left| D^2u_{\mathrm{hom}}(x) \right|^2\,dx.
\end{multline}
By the Poincar\'e inequality,
\begin{equation*} \label{}
\fint_{z+Q_{n+2}} \left| Du_{\mathrm{hom}}(x) - p(z) \right|^2\,dx \leq C 3^{2n} \fint_{z+Q_{n+2}} \left| D^2u_{\mathrm{hom}} (x) \right|^2\,dx.
\end{equation*}
Using this and 
\begin{equation*} \label{}
\overline L(p) \geq \overline L(p(z)) + q(z) \cdot (p-p(z)) - C|p-p(z)|^2,
\end{equation*}
we obtain from~\eqref{e.bleck} that
\begin{multline*} \label{}
\fint_{z+Q_n}  L(D\tilde v(x),x)  \,dx - \fint_{z+Q_{n+2}} \overline L( Du_{\mathrm{hom}}(x)) \, dx \\
\leq C\mathcal E' + C 3^{n-l}\left( K_0^2 + \fint_{z+Q_{n+3}} \left| Du_{\mathrm{hom}}(x) \right|^2\, dx \right) 
+  C3^{n+l}\fint_{z+Q_{n+3}} \left| D^2u_{\mathrm{hom}}(x) \right|^2\,dx.
\end{multline*}
Summing this over all $z\in V\cap 3^n\Zd$ such that $z+Q_{n+2} \subseteq V$ and applying~\eqref{e.meyapp} and~\eqref{e.H2app}, we at last obtain~\eqref{e.tilduhomW}.

\smallskip

\emph{Step 4.} We estimate the contribution of the energy of $\tilde u_{\mathrm{hom}}$ in the boundary strip $U \setminus V^\circ$. The claim is that 
\begin{equation} \label{e.tildeuhombndy}
 \fint_{U \setminus V^\circ} \left|D\tilde u_{\mathrm{hom}}(x)\right|^2\, dx \\
 \leq C\mathcal E' + C3^{n-l} M^2 + C3^{(l-n-m)(1-2/r)} M^2.
\end{equation}
By H\"older's inequality and~\eqref{e.meyapp},
\begin{align} \label{e.Holdbndr2}
\frac{1}{|U|} \int_{U\setminus V} |Du_{\mathrm{hom}}(x)|^2 \,dx & \leq \left(\frac{\left|U\setminus V \right|}{|U|} \right)^{1-2/r} \left( \fint_U   |Du_{\mathrm{hom}}(x)|^r\,dx \right)^{2/r}  \\
& \leq C3^{(l-n-m)(1-2/r)} M^2.\nonumber
\end{align}
By the triangle inequality,~\eqref{e.eta} and the expression
\begin{equation} \label{e.rotcorpse}
D\tilde u_{\mathrm{hom}}(x) = \eta (x) D\tilde v(x) + (1-\eta(x)) Du_{\mathrm{hom}}(x) + D\eta(x) \left(\tilde v(x) - u_{\mathrm{hom}}(x) \right),
\end{equation}
we obtain, for each $x\in V\setminus V^\circ$, 
\begin{equation*} \label{}
\left| D\tilde u_{\mathrm{hom}}(x) \right| \leq  \left|D\tilde v(x) \right|+ \left| Du_{\mathrm{hom}} (x) \right| + C3^{-l} \left| \tilde v(x) - u_{\mathrm{hom}}(x) \right|.
\end{equation*}
By the Poincar\'e inequality,~\eqref{e.affineagree} and~\eqref{e.mesogradwts}, we get, for every $z\in V \cap 3^n\Zd$, 
\begin{multline*} \label{}
3^{-2n}\fint_{z+Q_{n}}  \left| \tilde v(x) - u_{\mathrm{hom}}(x) \right|^2\, dx \\
  \leq C\mathcal E' + C \left( K_0^2 + \fint_{z+Q_{n+3}} \left| Du_{\mathrm{hom}}(x) \right|^2\, dx \right) 
+  C3^{n+l}\fint_{z+Q_{n+3}} \left| D^2u_{\mathrm{hom}}(x) \right|^2\,dx.
\end{multline*}
According to~\eqref{e.vgradbndnu},~\eqref{e.pz} and~\eqref{e.tuhmgvz}, for every $z\in V\cap 3^n\Zd$,
\begin{multline*} \label{}
\fint_{z+Q_{n}} \left| D\tilde v(x) \right|^2\, dx \\
  \leq C\mathcal E' + C \left( K_0^2 + \fint_{z+Q_{n+3}} \left| Du_{\mathrm{hom}}(x) \right|^2\, dx \right) 
+  C3^{n+l}\fint_{z+Q_{n+3}} \left| D^2u_{\mathrm{hom}}(x) \right|^2\,dx.
\end{multline*}
We now obtain~\eqref{e.tildeuhombndy} after summing the previous two inequalities over all $z\in (V\setminus V^\circ)\cap 3^n\Zd$, combining the result with~\eqref{e.rotcorpse} and using~\eqref{e.bdrystrip},~\eqref{e.H2app},~\eqref{e.Holdbndr2} and the fact  that $\tilde u_{\mathrm{hom}} \equiv u_{\mathrm{hom}}$ in $U\setminus V$.

\smallskip

The lemma now follows from~\eqref{e.Holdbndr2},~\eqref{e.tilduhomW} and~\eqref{e.tildeuhombndy}.
\end{proof}

We now complete the proof of Proposition~\ref{p.blackbox}.

\begin{proof}[{Proof of~\eqref{e.BBwts}}]
By Lemmas~\ref{l.heteromod} and~\ref{l.homomod},
\begin{align*}
 \fint_U  L\left(Du(x), x \right)\,dx & \leq  \fint_U  L\left(D\tilde u_{\mathrm{hom}}(x), x \right)\,dx \\
 & \leq  \fint_U \overline L\left(D u_{\mathrm{hom}}(x)) \right)\,dx + C\mathcal E'' \\
 & \leq \fint_U \overline L\left(D \tilde u(x)) \right)\,dx +C\mathcal E'' \\
 & \leq  \fint_U L\left(Du(x), x \right)\,dx + C\mathcal E''
\end{align*}
where  we have set
\begin{equation*} \label{}
\mathcal E'':= \mathcal E'+ C M^2\left( 3^{n-l} + 3^{\beta (l-n-m)} \right).
\end{equation*}
In particular, 
\begin{equation*} \label{}
\left| \fint_U  L\left(Du(x), x \right)\,dx -\fint_U \overline L\left(D u_{\mathrm{hom}}(x)) \right)\,dx  \right|  \leq C\mathcal E'',
\end{equation*}
which verifies part of~\eqref{e.BBwts}. The above string of inequalities also gives
\begin{equation*} \label{}
 \fint_U  L\left(D\tilde u_{\mathrm{hom}}(x), x \right)\,dx \leq \fint_U  L\left(Du(x), x \right)\,dx +  C\mathcal E''
\end{equation*}
and
\begin{equation*} \label{}
\fint_U \overline L\left(D \tilde u(x)) \right)\,dx \leq \fint_U \overline L\left(D u_{\mathrm{hom}}(x)) \right)\,dx + C\mathcal E''.
\end{equation*}
Then uniform convexity (i.e., an argument analogous to that of Lemma~\ref{l.convexL2}) yields 
\begin{equation*} \label{}
\fint_{U} \left| Du(x) - D \tilde u_{\mathrm{hom}}(x) \right|^2\, dx + \fint_{U} \left| Du_{\mathrm{hom}}(x) - D \tilde u(x) \right|^2\, dx \leq C \mathcal E''.
\end{equation*}
The Poincar\'e inequality then gives
\begin{equation*} \label{}
3^{-2(n+m)}\fint_{U} \left(  \left| u(x) -  \tilde u_{\mathrm{hom}}(x) \right|^2 + \left| u_{\mathrm{hom}}(x) -  \tilde u(x) \right|^2 \right) \, dx \leq C \mathcal E''.
\end{equation*}
Recall from the definition of $\tilde u$ that
\begin{equation*} \label{}
\tilde u(x) -u(x) = \eta(x) \left( \xi(x) - u(x) \right)
\end{equation*}
and, according to~\eqref{e.meyapp} and~\eqref{e.upone}, that
\begin{equation*} \label{}
\fint_{U} \eta^2(x)\left( \xi(x) - u(x) \right)^2\,dx \leq \fint_{V} \left( \xi(x) - u(x) \right)^2\,dx  \leq C3^{2n} M^2.
\end{equation*}
Therefore the triangle inequality gives
\begin{equation*} \label{}
3^{-2(n+m)}\fint_{U} \left|  u(x)-u_{\mathrm{hom}}(x)  \right|^2\, dx \leq C \mathcal E'' + CM3^{-2m} \leq C\mathcal E''.
\end{equation*}
This completes the proof of~\eqref{e.BBwts} and therefore of Proposition~\ref{p.blackbox}.
\end{proof}

\begin{remark}
The argument above and those of Lemma~\ref{l.heteromod} and~\ref{l.homomod} contain more information than what is stated in Proposition~\ref{p.blackbox}, namely a quantitative estimate for the difference between mesoscopic spatial averages of the (heterogeneous) energy of $u^\ep$ and $\overline L(Du_{\mathrm{hom}})$, and a quantitative statement concerning the weak convergence of $Du^\ep$ to $Du_{\mathrm{hom}}$, which can be stated in terms of an estimate on~$\| Du^\ep - Du_{\mathrm{hom}} \|_{H^{-1}}$. We leave the formulation of this result and the details of the argument to the reader.
\end{remark}

\bibliographystyle{plain}
\bibliography{energy}

\end{document}